\newtheorem{theorem}{\bf Theorem}[section]
\newtheorem{lemma}[theorem]{\bf Lemma}
\newtheorem{proposition}[theorem]{\bf Proposition}
\newtheorem{corollary}[theorem]{\bf Corollary}
\newtheorem{remark}[theorem]{\sc Remark}
\newtheorem{hypothesis}[theorem]{\bf Hypothesis}
\newtheorem{example}[theorem]{\bf Example}
\newcommand{\N}{\mathbb{N}}
\newcommand{\pr }{\mathrm{Pr} }
\newcommand{\ep}{\epsilon}
\DeclareMathOperator{\D}{E}
\newcommand{\Q }{Q }
\newcommand{\B }{B }
\begin{document}
\title[Commuting probability]{Commuting probability for conjugate subgroups of a finite group}
\thanks{The first and the third authors are members of GNSAGA (INDAM). The second author was partially supported by NSF grant DMS 1901595 and Simons Foundation Fellowship 609771.
The fourth author was  supported by  FAPDF and CNPq.
}

\author[E. Detomi]{Eloisa Detomi}
\address{Dipartimento di Matematica \lq\lq Tullio Levi-Civita\rq\rq, Universit\`a degli Studi di Padova, Via Trieste 63, 35121 Padova, Italy} 
\email{eloisa.detomi@unipd.it}
\author[R. M. Guralnick]{Robert M. Guralnick}
\address{ Department of Mathematics, University of
Southern California, Los Angeles, CA90089-2532, USA}
\email{guralnic@usc.edu}

\author[M. Morigi]{Marta Morigi}
\address{Dipartimento di Matematica, Universit\`a di Bologna\\
Piazza di Porta San Donato 5 \\ 40126 Bologna \\ Italy}
\email{marta.morigi@unibo.it}
\author[P. Shumyatsky]{Pavel Shumyatsky}
\address{Department of Mathematics, University of Brasilia\\
Brasilia-DF \\ 70910-900 Brazil}
\email{pavel@unb.br}

\subjclass[2020]{20D20; 20E45; 20P05} 
\keywords{Commuting probability, Sylow subgroups, Simple groups}

\begin{abstract} 
Given two subgroups $H,K$ of a finite group $G$, the probability that a pair of random elements from $H$ and $K$ commutes is denoted by $\pr(H,K)$.
We address the following question.

Let $P$ be a $p$-subgroup of a finite group $G$ and assume that $\pr(P,P^x)\geq\ep>0$ for every $x\in G$. Is the order of $P$ modulo $O_p(G)$ bounded in terms of $\ep$ only?

With respect to this question, we establish several positive results but show that in general the answer is negative. In particular, we prove that if the composition factors of $G$, which are isomorphic to simple groups of Lie type in characteristic $p$, have Lie rank at most $n$, then the order of $P$ modulo $O_p(G)$ is bounded in terms of $n$ and $\ep$ only (Theorem \ref{main2}). If $P$ is a Sylow $p$-subgroup of $G$, then the order of $P$ modulo $O_p(G)$ is bounded in terms $\ep$ only (Theorem \ref{main1}). Some other results of similar flavour are established.

We also show that if $\pr(P_1,P_2)>0$ for every two Sylow $p$-subgroups $P_1,P_2$ of a profinite group $G$, then $O_{p,p'}(G)$ is open in $G$ (Theorem \ref{main4}).

 \end{abstract}
 \maketitle

\section{Introduction} 
The commuting probability $\pr(G)$ of a finite group $G$ is the probability that two random elements of $G$ commute. More generally,
if $H$ and $K$ are two subgroups of $G$, we write $\pr(H,K)$ for the probability that two randomly chosen elements from $H$ and $K$ commute. Thus, 
\[ \pr(H,K) =\frac{ | \{ (x,y) \in H\times K \mid xy=yx \} |}{|H|\,|K|}.\]

It is well-known that $\pr(G)\leq5/8$ for every nonabelian group G. Another important result is a
theorem of Peter Neumann which states that if $\pr(G)\geq\ep>0$, then $G$ has a subgroup $H$ such that the index $|G : H|$ and the order of $H'$ are both bounded in terms of $\ep$ only \cite{neumann}. In what follows we say for brevity that $G$ is bounded-by-abelian-by-bounded if it has a structure as in Neumann's theorem.  The reader is referred to the papers \cite{AS,AS2,bgmn,DS-commprob,DLMS-prob,DMS-Syl_prof,DLMS-finite,DMS-approx,eberhard,gr} 
 for further results on commuting probability. The recent article \cite{DLMS-finite} handles finite groups $G$ in which for every pair of distinct primes $p,q\in\pi(G)$ there is a Sylow $p$-subgroup $P$ and a Sylow $q$-subgroup $Q$ such that $\pr(P,Q)\geq\ep$. Among other results, it was shown that the index $|G:F_2(G)|$ is bounded in terms of $\ep$ only.  Here $F_2(G)$ stands for the second term of the upper Fitting series of $G$.

 In this paper we consider finite groups $G$ such that for some prime $p\in\pi(G)$ we have $\pr(P_1,P_2)\geq\ep>0$ whenever $P_1,P_2$ are Sylow $p$-subgroups of $G$. As usual, $O_p(G)$ stands for the maximal normal $p$-subgroup of $G$.

\begin{theorem}\label{main1} Let $G$ be a finite group and $p\in\pi(G)$. Assume that $\pr(P_1,P_2)\geq\ep>0$ for every two Sylow $p$-subgroups $P_1,P_2$ of $G$. Then the order of a Sylow $p$-subgroup of $G/O_p(G)$ is bounded in terms of $\ep$ only. Moreover $O_p(G)$ is bounded-by-abelian-by-bounded.
\end{theorem}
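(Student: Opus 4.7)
The ``moreover'' clause is immediate from Neumann's theorem: specializing the hypothesis to $P_1 = P_2 = P$ gives $\pr(P) \geq \ep$, whence $P$ admits a subgroup $H$ of $\ep$-bounded index with $|H'|$ also $\ep$-bounded. Since $O_p(G) \leq P$, the subgroup $H \cap O_p(G)$ has $\ep$-bounded index in $O_p(G)$ and derived subgroup contained in $H'$, so $O_p(G)$ is bounded-by-abelian-by-bounded.

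For the bound on $|P/O_p(G)|$ I would argue by induction on $|G|$, reducing first to the case $O_p(G) = 1$. The hypothesis survives under passage to a quotient $G/N$ because $\pr(P_1 N/N, P_2 N/N) \geq \pr(P_1, P_2)$ and every Sylow $p$-subgroup of $G/N$ is the image of a Sylow of $G$; hence I may assume $O_p(G) = 1$, and the task becomes to bound $|P|$ itself in terms of $\ep$. Let $F^*(G) = O_{p'}(G) \cdot E(G)$ be the generalized Fitting subgroup; since $O_p(G) = 1$, one has $P \cap F^*(G) = P \cap E(G)$.

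The main step, and the principal obstacle, is a structural statement about finite simple groups: \emph{if $S$ is a finite nonabelian simple group with Sylow $p$-subgroup $U$ such that $\pr(U, U^x) \geq \ep$ for every $x \in S$, then $|U|$ is bounded in terms of $\ep$.} I would prove this using the classification. For groups of Lie type in defining characteristic $p$, the Bruhat decomposition supplies a Weyl element $w$ with $U \cap U^w = 1$, whence $\pr(U, U^w) \leq 1/|U|$. For Lie-type groups in cross characteristic, Sylow $p$-subgroups are typically abelian or nearly so, and a direct analysis of a pair of conjugates produces commuting probability decaying as $|U|$ grows; alternating and sporadic groups are handled by inspection of Sylow structure.

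Granting this simple-group lemma, each component of $G$ contributes a Sylow $p$-subgroup of $\ep$-bounded order, and the number of components on which $P$ acts nontrivially is bounded by an averaging argument applied to the hypothesis, so $|P \cap E(G)|$ is bounded. The remaining quotient $P/(P \cap F^*(G))$ embeds into $\out(F^*(G))$ via $C_G(F^*(G)) \leq F^*(G)$; its order is controlled by the bounded structure of $E(G)$ together with the commuting-probability hypothesis applied to conjugates $P^x$ inside the semidirect action of $P$ on $F(G) = O_{p'}(G)$, which restricts how $P$ can act on $F(G)$. Combining these bounds yields $|P|$ bounded in terms of $\ep$ alone.
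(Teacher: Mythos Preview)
Your overall architecture (Neumann for the ``moreover'' clause, reduction to $O_p(G)=1$, analysis through $F^*(G)$) matches the paper's, but the defining-characteristic step contains a genuine error. From $U \cap U^{w_0} = 1$ you cannot deduce $\pr(U, U^{w_0}) \leq 1/|U|$; trivial intersection says nothing about centralizers of individual elements. Concretely, in $\mathrm{SL}_3(q)$ with $U$ upper unitriangular and $U^- = U^{w_0}$ lower unitriangular, the element $x = I + E_{12}$ has $|C_{U^-}(x)| = q$ (the $(3,2)$-entry of the centralizing matrix is free), so already here $\pr(U, U^-) > 1/|U|$, and for larger rank many nontrivial elements of $U$ have large centralizers in $U^-$. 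Establishing that $\pr(U,U^-)\to 0$ as $|U|\to\infty$ is not obvious, and it is not the route the paper takes.

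The paper bypasses this via an exponent argument that genuinely exploits the Sylow hypothesis. Lemma~\ref{exp-bounded} shows that under Hypothesis~\ref{basic} the exponent of $P$ is $\ep$-bounded: a bounded power of each element of $P$ lands in every $P_g$, hence commutes with all its $G$-conjugates, hence lies in $F(G)$. Since the Sylow $p$-subgroup of a classical group of Lie rank $r$ in characteristic $p$ has exponent tending to infinity with $r$ (via the embedded alternating group $A_r$), the characteristic-$p$ composition factors of $G$ have $\ep$-bounded Lie rank, and Theorem~\ref{main2} finishes. You should also be aware that your sketches for the cross-characteristic and alternating cases, for bounding the number of relevant components, and for controlling the action of $P$ on $O_{p'}(G)$ each correspond to substantial arguments in the paper (Lemmas~\ref{difchara} and~\ref{alter}, and Propositions~\ref{soluble}--\ref{trickB}); none of these is routine, and ``direct analysis'' or ``averaging argument'' does not convey a workable plan for them.
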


A curious corollary of the above result is that if the condition holds for every prime in $\pi(G)$, then the structure of $G$ is roughly similar to that of a group $K$ with $\pr(K)\geq\ep$.

\begin{corollary}\label{cor1} Let $G$ be a finite group such that for every $p\in\pi(G)$ and every two Sylow $p$-subgroups $P_1,P_2$ of $G$ we have $\pr(P_1,P_2)\geq\ep>0$. Then $G$ is bounded-by-abelian-by-bounded. 
 \end{corollary}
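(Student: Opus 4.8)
The plan is to derive the corollary from Theorem~\ref{main1}, applied separately to each prime $p\in\pi(G)$. Let $m=m(\ep)$ be a common bound furnished by that theorem, so that for every $p$ we have $|G/O_p(G)|_p\le m$ and $O_p(G)$ possesses a subgroup $A_p$ with $|O_p(G):A_p|\le m$ and $|A_p'|\le m$. Two elementary observations carry the argument. First, let $F=F(G)=\prod_p O_p(G)$ be the Fitting subgroup; its $p$-part is exactly $O_p(G)$, so $|G/F|_p=|G|_p/|O_p(G)|=|G/O_p(G)|_p\le m$ for every $p$. Hence every prime dividing $|G/F|$ is at most $m$, so $G/F$ involves only the boundedly many primes $\le m$ and $|G:F|$ is bounded in terms of $\ep$ alone. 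Second, if $p>m$ then $|O_p(G):A_p|$, being a power of $p$ that is at most $m<p$, equals $1$; thus $O_p(G)=A_p$ and $|O_p(G)'|=|A_p'|\le m<p$ forces $O_p(G)$ to be abelian. In other words, all non-abelian behaviour inside $F$ is confined to the primes $p\le m$.

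With this in place I would build a large almost-abelian subgroup of $F$. Let $\pi^*=\{p\in\pi(G):p\le m\}$, a set of boundedly many primes, and set $B_p=A_p$ for $p\in\pi^*$ and $B_p=O_p(G)$ for $p\notin\pi^*$; by the second observation, $|O_p(G):B_p|\le m$ and $|B_p'|\le m$ for every $p$, and both equal $1$ once $p\notin\pi^*$. Since $F$ is the internal direct product of the $O_p(G)$, the subgroup $B:=\prod_p B_p$ is the direct product of the $B_p$, whence $|F:B|=\prod_{p\in\pi^*}|O_p(G):A_p|$ and $|B'|=\prod_{p\in\pi^*}|A_p'|$: each is a product over a bounded index set of factors each at most $m$, hence bounded in terms of $\ep$. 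Finally $|G:B|=|G:F|\cdot|F:B|$ and $|B'|$ are bounded in terms of $\ep$ only, so $B$ witnesses that $G$ is bounded-by-abelian-by-bounded, which is the assertion.

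Once Theorem~\ref{main1} is available the argument is short and essentially calculation-free; the only delicate point is the bookkeeping over the set $\pi(G)$, whose cardinality is not controlled. Multiplying the local estimates $|O_p(G):A_p|\le m$ over all $p$ would give a worthless bound, so the genuinely needed idea is the observation that $O_p(G)$ is already abelian for every $p>m$, which both confines the dangerous product to the bounded set $\pi^*$ and (dually) forces $G/F(G)$ to involve only small primes. The one thing I would check carefully is that the bound in Theorem~\ref{main1} making $O_p(G)$ bounded-by-abelian-by-bounded is uniform in $p$, i.e. a function of $\ep$ only, as its statement indicates, since the whole argument rests on that uniformity.
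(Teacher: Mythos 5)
Your proof is correct and follows essentially the same route as the paper: apply Theorem~\ref{main1} for each prime, use the uniformity of the bound in $p$ to conclude that $|G/F(G)|$ is bounded and that $O_p(G)$ is abelian for all $p$ exceeding the bound, and then assemble a bounded-index subgroup with bounded derived subgroup from the local pieces. The paper phrases the last step by reducing to the nilpotent case $G=F(G)$ and invoking Neumann's theorem on each Sylow subgroup, while you build the subgroup $B=\prod_p B_p$ explicitly, but the content is identical.
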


In the course of the work on the above results, the following challenging question was raised. \medskip

{\it Let $P$ be a $p$-subgroup of a finite group $G$ and assume that $\pr(P,P^x)\geq\ep>0$ for every $x\in G$. Is the order of $P$ modulo $O_p(G)$ bounded in terms of $\ep$ only?}\medskip

The answer to this turns out to be negative -- see  Example \ref{example}. Yet, we have established several positive results with respect to the above question.

\begin{theorem}\label{main2} Let $G$ be a finite group containing a $p$-subgroup $P$ such that $\pr(P,P^x)\geq\ep>0$ for every $x\in G$. Suppose that the composition factors of $G$, which are isomorphic to simple groups of Lie type in characteristic $p$, have Lie rank at most $n$. Then the order of $P$ modulo $O_p(G)$ is bounded in terms of $n$ and $\ep$ only.
\end{theorem}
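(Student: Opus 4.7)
Since commuting probability does not decrease under passing to quotients, the hypothesis $\pr(P,P^x)\ge\epsilon$ descends to $\bar G = G/O_p(G)$, and since $|PO_p(G)/O_p(G)| = |\bar P|$ it suffices to bound $|\bar P|$. From now on I assume $O_p(G) = 1$ and aim to bound $|P|$ by a function of $n$ and $\epsilon$. The identity
\[ \pr(P, P^g) = \frac{1}{|P|\,|P^g|} \sum_{y \in P^g} |C_P(y)| \ge \epsilon \]
gives, for every $g \in G$, an element $y_g \in P^g$ with $|C_P(y_g)| \ge \epsilon|P|$ (up to a harmless adjustment to force $y_g\neq 1$); equivalently, every conjugate of $P$ contains a nontrivial element whose $P$-conjugacy class has size at most $O(1/\epsilon)$, and symmetrically.

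My plan is then to analyze the generalized Fitting subgroup $F^*(G) = F(G)E(G)$. Because $O_p(G) = 1$, $F(G)$ is a $p'$-group, so $P \cap F^*(G)$ lives entirely in the layer $E(G)$. The quasisimple components $L$ of $E(G)$ fall into three types: (i) $p\nmid|L/Z(L)|$, contributing nothing to $P\cap L$; (ii) $L/Z(L)$ of Lie type in characteristic $p$, hence of Lie rank at most $n$ by hypothesis; (iii) $L/Z(L)$ alternating, sporadic, or of Lie type in characteristic coprime to $p$. The quotient $G/F^*(G)$ embeds into $\bigl(\prod\out(L_i/Z(L_i))\bigr)\rtimes\Sym_k$ on the $k$ components, so $P/(P\cap F^*(G))$ is controlled by outer-automorphism and permutation data. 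For type (ii) with $L$ defined over $\F_q$, a $p$-subgroup of $L$ lies in the unipotent radical of a parabolic, of order polynomial in $q$ with exponent depending only on $n$; the first-paragraph extraction, applied with conjugators $g$ moving $P\cap L$ through distinct Sylow $p$-subgroups of $L$, bounds $q$ in terms of $n$ and $\epsilon$, because generic unipotent classes in rank-$\le n$ groups over $\F_q$ have size growing with $q$. For type (iii) one argues in the spirit of Theorem \ref{main1}: in alternating, sporadic, and cross-characteristic simple groups the commuting probability of two distinct Sylow $p$-subgroups tends to zero as $|L|\to\infty$, forcing $|L|$ and hence $|P\cap L|$ to be bounded; this then bounds the number $k$ of components, and finally $|P/(P\cap F^*(G))|$ via the bounded action data.

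The main obstacle I foresee is quantitative: the field-size bound in type (ii) must be uniform in $n$, and one has to handle the coupling between the action of $P$ on the set of components and the internal unipotent structure of each component. One needs to choose the conjugators $g$ so that $P^g\cap L$ lies in a Sylow of $L$ genuinely different from the one containing $P\cap L$, and so that the extracted element $y_g$ is not accidentally confined to a low-dimensional subvariety (such as a torus or the fixed points of a field automorphism) where class sizes can stay bounded even as $q$ grows. This requires a careful case analysis of unipotent class sizes in bounded-rank Lie type groups, and is where I expect the bulk of the technical work to lie.
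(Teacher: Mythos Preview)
Your outline has the right shape---reduce to $O_p(G)=1$, study $F^*(G)$, bound per-component contributions---but there are two genuine gaps, and one of them is exactly where the paper's real work lies.

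\textbf{First gap: the soluble part.} Your claimed embedding $G/F^*(G)\hookrightarrow\bigl(\prod\out(L_i/Z(L_i))\bigr)\rtimes\Sym_k$ is false as stated: it ignores $F(G)$. With $O_p(G)=1$ the Fitting subgroup is a $p'$-group but may be arbitrarily large, and $P$ can act faithfully on $F(G)$ while $E(G)=1$ (take $G=V\rtimes P$ with $V$ an elementary abelian $q$-group, $q\ne p$). So ``bounded action data on the components'' controls nothing here. The paper disposes of this by a separate argument (Proposition~\ref{soluble}) bounding $|P|$ when $G=F(G)P$ with $O_p(G)=1$, which lets one reduce to the case $R(G)=1$, where $F^*(G)$ is genuinely semisimple.

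\textbf{Second gap: bounding the number of components.} Even after reducing to $R(G)=1$ and bounding, for each simple factor $S$, the index $|P:C_P(S)|$ by some $(n,\epsilon)$-bounded $C$, you still need to bound $|P|$. Your sentence ``this then bounds the number $k$ of components'' is a non-sequitur: knowing only $|P:C_P(S_i)|\le C$ and $\bigcap_i C_P(S_i)=1$ gives $|P|\le C^k$, not a bound on $k$. A product argument $\pr(P,P^g)\le\prod_i(3/4)$ would require $P$ to split as a direct product over the components, which it does not in general. The paper handles this with Proposition~\ref{trickB}, whose proof uses a much stronger extraction than your single element $y_g$: from $\pr(P,P^g)\ge\epsilon$ one gets (Lemmas~\ref{lemH_0} and~\ref{lemQ}) a normal subgroup $Q_a\le P$ of bounded index such that for each $g\in Q_a$ the whole subgroup $\langle g^P\rangle$ centralizes $[a,D]$ for some bounded-index $D\le P$. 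This ``large subgroup'' information, applied with a carefully chosen $a$ touching every minimal normal factor, is what forces $k$ to be bounded. Your averaging extraction of one element with large centralizer is too weak to run this argument.

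A smaller point: your proposed bound on $q$ via ``generic unipotent class sizes grow with $q$'' is fragile for the reason you yourself flag---the element $y_g$ need not be generic. The paper sidesteps this by bounding the \emph{average} $e_G(P)=|G|^{-1}\sum_g\pr(P,P^g)$ using fixed-point ratios (Liebeck--Saxl), which forces $q$ to be bounded without any genericity assumption on the extracted element.
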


This shows that in the case of $p$-soluble groups the answer to the above question is positive.
\begin{corollary}\label{cor2} Assume that a finite $p$-soluble group $G$ contains a $p$-subgroup $P$ such that $\pr(P,P^x)\geq\ep>0$ for every $x\in G$. Then the order of $P$ modulo $O_p(G)$ is bounded in terms of $\ep$ only. \end{corollary}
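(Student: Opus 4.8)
The plan is to deduce this at once from Theorem \ref{main2}. The only thing that needs checking is that, for a $p$-soluble group, the hypothesis of that theorem on composition factors is automatically satisfied, with an absolute bound on the Lie rank. Recall that, by definition, every composition factor of a $p$-soluble group is either a $p$-group --- and hence cyclic of order $p$ --- or a group of order prime to $p$. A nonabelian finite simple group of Lie type in characteristic $p$ is neither of these: its order is divisible by $p$, so it is not a $p'$-group, and it is nonabelian, so it is not cyclic of order $p$ and therefore not a $p$-group. Hence a $p$-soluble group has no composition factor at all that is isomorphic to a simple group of Lie type in characteristic $p$.

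It follows that the hypothesis of Theorem \ref{main2} is satisfied vacuously, with $n=1$, say: each of the (non-existent) composition factors of $G$ of the relevant type has Lie rank at most $1$. Theorem \ref{main2} then gives that the order of $P$ modulo $O_p(G)$ is bounded in terms of $n$ and $\ep$; since here $n=1$ is an absolute constant, this is a bound depending on $\ep$ alone, which is exactly the assertion of the corollary.

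I do not expect any genuine obstacle in this argument; the only point requiring care is the convention attached to the phrase ``simple group of Lie type'' in Theorem \ref{main2}. Under the standard convention, which excludes the cyclic group $\Z/p$, nothing further is needed. If one instead adopted a convention under which $\Z/p$ counted as a (rank $0$ or rank $1$) group of Lie type, the argument would still go through verbatim, since the relevant Lie rank would remain an absolute constant.
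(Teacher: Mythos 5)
Your argument is correct and is exactly the paper's (implicit) one: the paper gives no separate proof of Corollary~\ref{cor2}, presenting it as an immediate consequence of Theorem~\ref{main2} because a $p$-soluble group has only cyclic $p$-groups and $p'$-groups as composition factors, so none of them is a nonabelian simple group of Lie type in characteristic $p$ and the rank hypothesis holds vacuously.
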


Let $E_p(H)$ denote the product of the components of order divisible by $p$ in a finite group $H$. Write $O_{p,e_p}(H)$ for the full inverse image of $E_p(H/O_p(H))$ and $O_{p,e_p,p}(H)$ for the full inverse image of $O_p(H/O_{p,e_p}(H))$.

\begin{theorem}\label{main3} Assume that a finite group $G$ contains a $p$-subgroup $P$ such that $\pr(P,P^x)\geq\ep>0$ for every $x\in G$. Then the order of $P$ modulo $O_{p,e_p,p}(G)$ is bounded in terms of $\ep$ only. \end{theorem}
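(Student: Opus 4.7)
The plan is to apply Theorem \ref{main2} to the quotient $\bar G = G/N$, where $N = O_{p,e_p,p}(G)$, after showing that the composition factors of $\bar G$ that are simple groups of Lie type in characteristic $p$ have Lie rank bounded by an \emph{absolute} constant. Setting $\bar P = PN/N$, a routine counting argument (the obstruction $[u,v]\in N$ is what matters) shows that the hypothesis descends to $\pr(\bar P, \bar P^{\bar x}) \geq \ep$ for every $\bar x \in \bar G$, so once the rank bound is in hand, Theorem \ref{main2} bounds $|\bar P|$ in terms of $\ep$ alone.

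As a preliminary reduction, I would pass to $G/O_p(G)$ and assume $O_p(G) = 1$, in which case $O_{p,e_p}(G)$ equals $E := E_p(G)$, the central product of the components of $G$ whose order is divisible by $p$, and $N/E = O_p(G/E)$. The first structural fact is that $O_p(\bar G) = 1$, which is automatic. The second, more substantive claim is that every component of $\bar G$ has order coprime to $p$. To prove it, I would take a component $\bar L$ of $\bar G$, lift it to $L^* \leq G$, and pass to the perfect residual $L = (L^*)^{\infty}$, which is perfect and subnormal in $G$, hence contained in the layer $E(G)$. Since perfect subnormal subgroups of a central product of components are themselves central products of subsets of those components, $L$ equals $\prod_{i \in S} L_i$ for some set $S$. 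Components of order divisible by $p$ lie in $E \leq N$ and contribute trivially to the image, while components of order coprime to $p$ survive as coprime-order quasi-simple images in $\bar G$. Since $\bar L$ is quasi-simple, exactly one such image survives, and $|\bar L|$ is coprime to $p$.

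Consequently $F^*(\bar G)$ is a $p'$-group, and the standard containment $C_{\bar G}(F^*(\bar G)) \leq F^*(\bar G)$ gives an embedding $\bar G / F^*(\bar G) \hookrightarrow \out(F^*(\bar G))$. The composition factors of the right-hand side come from three sources: the permutation action on the set of components of $\bar G$, contributing only alternating and cyclic factors; the outer automorphism groups of the individual components, which are solvable by Schreier's conjecture (a consequence of the classification of finite simple groups); and the automorphism group of the nilpotent $p'$-group $F(\bar G) = \prod_{q \neq p} O_q(\bar G)$, whose non-abelian composition factors are of Lie type in characteristics other than $p$. Therefore any simple composition factor of $\bar G$ of Lie type in characteristic $p$ must be accounted for by one of the finitely many exceptional isomorphisms such a group enjoys with an alternating group or a Lie-type group in a different characteristic (e.g.\ $\PSL_2(4) \cong A_5$, $\PSL_4(2) \cong A_8$, $\PSL_3(2) \cong \PSL_2(7)$), and its Lie rank is bounded by an absolute constant $n_0$.

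The proof concludes by applying Theorem \ref{main2} to $\bar G$ with this universal $n_0$, yielding $|\bar P| \leq f(n_0, \ep)$ for a bound depending only on $\ep$. The step I expect to be the main obstacle is the coprime-order claim for components of $\bar G$: although layer theory makes the outline clean, one has to verify carefully that the perfect residual of an arbitrary lift of a component genuinely sits inside $E(G)$ as a central product of components, and that precisely the components of order divisible by $p$ are absorbed into $N$. Once this is in place, the composition-factor bookkeeping via Schreier's theorem and the structure of $\aut$ of nilpotent $p'$-groups is a routine CFSG-based computation, and the descent of $\pr$ to $\bar G$ is straightforward.
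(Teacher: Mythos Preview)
Your argument has a genuine gap at the structural claim that every component of $\bar G = G/O_{p,e_p,p}(G)$ has order coprime to $p$. The step ``$L$ is perfect and subnormal in $G$, hence contained in the layer $E(G)$'' is simply false: perfect subnormal subgroups need not lie in the layer. Concretely, let $T$ and $S$ be nonabelian simple groups with $p$ dividing both $|T|$ and $|S|$, let $S$ act faithfully and transitively on $\{1,\dots,n\}$, and set $G = T^n \rtimes S$ with $S$ permuting the direct factors. Then $O_p(G)=1$; the components of $G$ are exactly the $n$ copies of $T$, so $E_p(G)=T^n$; and $O_p(G/T^n)=O_p(S)=1$. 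Hence $N=O_{p,e_p,p}(G)=T^n$ and $\bar G \cong S$, so $E_p(\bar G)=S\neq 1$. In this example your lift is $L^*=G$ and $L=(L^*)^{\infty}=G$, which is perfect and (trivially) subnormal but certainly not contained in $E(G)=T^n$. Taking $S=\PSL_m(p)$ shows that $\bar G$ can itself be a simple group of Lie type in characteristic $p$ of arbitrarily large rank, so no absolute rank bound on the characteristic-$p$ composition factors of $\bar G$ is available from structure alone, and your reduction to Theorem~\ref{main2} collapses. (Lemma~\ref{layer} in the paper, which may look similar to what you want, has the extra hypothesis that $L/Z(L)$ is semisimple; that fails here.)

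The paper's proof does not attempt any such structural statement about $\bar G$. Instead it uses the probabilistic hypothesis in an essential way, via Proposition~\ref{insoluble} (built on Lemma~\ref{lemQ}), to first bound $|P|$ modulo $R_2(G)$; it then replaces $P$ by $P\cap R_2(G)$ and works inside $R_2(G)$, where the quotient by $L_1(G)$ is soluble, applying Corollary~\ref{cor2} and Proposition~\ref{soluble-tot} before a final normal-closure argument places a bounded-index subgroup of $P$ inside $O_{p,e_p,p}(G)$. The moral is that the passage from the $(n,\ep)$-bound of Theorem~\ref{main2} to the $\ep$-only bound of Theorem~\ref{main3} genuinely requires the hypothesis on $\pr(P,P^x)$ and cannot be achieved by group-theoretic bookkeeping on $G/O_{p,e_p,p}(G)$.
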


The above theorems can be easily extended to the case where the order of the subgroup is not necessarily a prime power. This is because, given a subgroup $K\leq G$ such that $\pr(K,K^x)\geq\ep>0$ for every $x\in G$, we can apply the above results to the Sylow subgroups of $K$. For example, the following holds.

\begin{corollary}\label{cor3}  Let a finite group $G$ contain a subgroup $K$ such that $\pr(K,K^x)\geq\ep>0$ for every $x\in G$.
\begin{enumerate} 
\item If $K$ is a Hall subgroup of $G$, then the order of $K$ modulo the Fitting subgroup $F(G)$ is bounded in terms of $\ep$ only.
\item If $n$ is the maximum of Lie ranks of composition factors of $G$, then the order of $K$ modulo 
  $F(G)$ is bounded in terms of $n$ and $\ep$ only. 
\end{enumerate}
\end{corollary}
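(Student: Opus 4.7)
The plan is to reduce both statements to the single-prime results (Theorems \ref{main1} and \ref{main2}) by applying them prime by prime to the Sylow subgroups of $K$, and then to combine the bounds across primes using a size argument on $p$-powers.

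The core step is a Sylow reduction: I would first establish that, whenever $\pr(K, K^x) \geq \ep$ for every $x \in G$ and $P$ is a Sylow $p$-subgroup of $K$, one has $\pr(P, P^x) \geq \delta(\ep)$ for every $x \in G$, with $\delta > 0$ depending only on $\ep$. The key observation is that a commuting pair $(h, k) \in K \times K^x$ decomposes uniquely as a $4$-tuple $(h_p, h_{p'}, k_p, k_{p'})$ of pairwise commuting $p$- and $p'$-parts; counting such tuples and using that all Sylow $p$-subgroups of $K$ (respectively of $K^x$) are conjugate in $K$ (respectively in $K^x$) lets one transfer the commuting probability on $K$ down to a commuting probability on $P$ and $P^x$. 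This is the main obstacle, since the bound $\delta(\ep)$ must be uniform in $x$, in $|K|$, and in $p$.

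Given the Sylow reduction, part (1) follows quickly: when $K$ is a Hall subgroup, each Sylow $p$-subgroup $K_p$ of $K$ is a full Sylow $p$-subgroup of $G$, and hence $O_p(G) \leq K_p \leq K$ for every $p \in \pi(K)$. Theorem \ref{main1} then gives $|K_p/O_p(G)| \leq N(\ep)$ for a function $N$ of $\ep$ alone. Since $|K_p/O_p(G)|$ is a $p$-power bounded by $N(\ep)$, it must equal $1$ for every $p > N(\ep)$, so at most $N(\ep)$ primes contribute a nontrivial factor. Using that $K \cap F(G) = \prod_{p \in \pi(K)} O_p(G)$ in this setting, we conclude
\[
|KF(G)/F(G)| \;=\; |K/(K \cap F(G))| \;=\; \prod_{p \in \pi(K)} |K_p/O_p(G)| \;\leq\; N(\ep)^{N(\ep)}.
\]

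For part (2), $K$ need not be Hall, but each $K_p$ is still a $p$-subgroup of $G$, so Theorem \ref{main2} applies and gives $|K_p/(K_p \cap O_p(G))| \leq M(n, \ep)$. The normal $p$-subgroup $K \cap O_p(G)$ of $K$ is contained in every Sylow $p$-subgroup of $K$, so $K_p \cap O_p(G) \leq K \cap O_p(G) \leq K_p$, whence
\[
|KF(G)/F(G)| \;=\; \prod_{p} \frac{|K_p|}{|K \cap O_p(G)|} \;\leq\; \prod_{p} \frac{|K_p|}{|K_p \cap O_p(G)|} \;\leq\; M(n, \ep)^{M(n, \ep)},
\]
where, as before, only primes $p \leq M(n, \ep)$ can contribute factors greater than $1$. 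This yields the desired bound in terms of $n$ and $\ep$ only.
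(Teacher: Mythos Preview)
Your overall plan---reduce to Sylow $p$-subgroups of $K$, apply Theorems \ref{main1} and \ref{main2} prime by prime, then combine---is exactly the paper's approach, and your combination step (bounding the number of contributing primes by the size bound itself) correctly spells out what the paper leaves as a terse ``hence''.

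However, you have misidentified the difficulty. What you call ``the main obstacle''---showing $\pr(P,P^x)\ge\delta(\ep)$ for a Sylow $p$-subgroup $P$ of $K$---is in fact trivial, and no decomposition into $p$-parts and $p'$-parts is needed. Since $P\le K$ and $P^x\le K^x$, two applications of the monotonicity in Lemma~\ref{lem:2.2}(2) give
\[
\pr(P,P^x)\ \ge\ \pr(P,K^x)\ \ge\ \pr(K,K^x)\ \ge\ \ep,
\]
so one may take $\delta(\ep)=\ep$. The paper uses exactly this observation (cf.\ the remark following Hypothesis~\ref{basic}). Your proposed $4$-tuple counting argument, while perhaps feasible, is unnecessary and, as you note yourself, not obviously uniform; replacing it with the one-line monotonicity makes the whole proof immediate.
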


The concept of commuting probability can be naturally extended to compact groups, using the normalized Haar measure (see Section 6 for details). 

In particular, Theorem \ref{main1} can be naturally extended to profinite groups $G$ such that there is a positive number $\epsilon$ and a prime $p\in\pi(G)$ with the property that $\pr(P_1,P_2)\geq\ep>0$ for every two Sylow $p$-subgroups $P_1,P_2$ of $G$ (see Proposition \ref{profinite-easy}). Here $\pi(G)$ denotes the set of primes dividing the order of the profinite group $G$, which is a supernatural number.

On the other hand, in the realm of profinite groups, it is more natural to handle the situation where the probabilities are positive, but not necessarily bounded away from zero. 
 
In this paper we prove the following profinite variation of Theorem \ref{main1}.

\begin{theorem}\label{main4}
Let $G$ be a profinite group and $p\in\pi(G)$. Assume that $\pr(P_1,P_2)>0$ for every two Sylow $p$-subgroups $P_1,P_2$ of $G$. Then $O_{p,p'}(G)$ is open. Moreover $O_p(G)$ is virtually abelian.
\end{theorem}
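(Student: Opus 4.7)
The plan is to reduce Theorem \ref{main4} to its uniform-$\ep$ counterpart (Proposition \ref{profinite-easy}) via a Baire category argument, followed by a bootstrap that extracts a uniform bound on pairs of Sylow $p$-subgroups of an open subgroup of $G$.

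The \emph{moreover} clause is handled first. Specializing the hypothesis to $P_1=P_2=P$ yields $\pr(P)>0$ for every Sylow $p$-subgroup $P$; the profinite analogue of Gustafson's theorem then gives that $P$ is virtually abelian, and $O_p(G)$, as a closed subgroup of $P$, inherits this property.

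For the openness of $O_{p,p'}(G)$, I would first apply Baire category. Let $X=G/N_G(P)$ be the compact Hausdorff space of Sylow $p$-subgroups, and define $\pi\colon X\times X\to[0,1]$ by $\pi(P_1,P_2)=\pr(P_1,P_2)$. Realizing $G$ as the inverse limit of its finite quotients $G/U$ (over open normal subgroups $U$) expresses $\pi$ as the pointwise infimum of the continuous functions $\pi_U$ arising from these quotients; hence $\pi$ is upper semi-continuous, $G$-invariant under diagonal conjugation, and strictly positive by hypothesis. Since $X\times X$ is a Baire space covered by the countable family of closed sets $\{\pi\geq 1/n\}$, some level set has non-empty interior, which lifts to produce $\ep>0$, an open normal subgroup $N$ of $G$, and elements $a,b\in G$ with $\pr(P^{am},P^{bn})\geq\ep$ for all $m,n\in N$.

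Setting $Q_i=P^{a_i}$ (with $a_1=a$ and $a_2=b$), I would then pass to the open subgroup $H:=\langle N,Q_1,Q_2\rangle$ of $G$. Since $H$ contains the Sylow $p$-subgroup $Q_1$ of $G$, the index $|G:H|$ divides the $p'$-number $|G:Q_1|$, so $Q_1$ and $Q_2$ are themselves Sylow $p$-subgroups of $H$; by Sylow's theorem in $H$, they are $H$-conjugate. The hardest step is the bootstrap from the local Baire bound $\pr(Q_1,Q_2^n)\geq\ep$ (for $n\in N$) to a uniform bound $\pr(R_1,R_2)\geq\ep'$ on every pair of Sylow $p$-subgroups $R_1,R_2$ of $H$; one achieves this by combining the $G$-invariance of $\pi$ with Sylow-conjugacy in $H$ and the decomposition of $H$ into the finitely many $N$-cosets. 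Once this uniform bound is in place, Proposition \ref{profinite-easy} applied to $H$ gives that the Sylow $p$-subgroup of $H/O_p(H)$ is finite and that $O_p(H)$ is bounded-by-abelian-by-bounded. Since $H$ is open in $G$ and has only finitely many $G$-conjugates, one then propagates back to $G$ (using the virtual abelianness of $O_p(G)$ from the \emph{moreover} step) to conclude that $|P:O_p(G)|<\infty$, whence $O_{p,p'}(G)$ is open in $G$.
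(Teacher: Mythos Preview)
Your overall architecture (Baire category plus Proposition \ref{profinite-easy}) matches the paper's, and the \emph{moreover} clause is handled the same way. The gap is the ``bootstrap'' step, which you flag as the hardest and then dispatch in one sentence. Concretely: from Baire you obtain $\pr(Q_1,Q_2^{\,n})\geq\ep$ for all $n\in N$, equivalently $\pr(Q_1,Q_1^{\,h_0 n})\geq\ep$ for a \emph{single} coset $h_0N$ of $N$ in $H$ (where $Q_2=Q_1^{h_0}$). To invoke Proposition \ref{profinite-easy} you need $\pr(Q_1,Q_1^{\,h})\geq\ep'$ for \emph{every} $h\in H$. The three ingredients you list---$G$-invariance of $\pi$, Sylow conjugacy in $H$, and the finite $N$-coset decomposition of $H$---do not transport the uniform bound from $h_0N$ to an arbitrary coset $cN$: for $h\in cN$ the original hypothesis gives only $\pr(Q_1,Q_1^{\,h})>0$ pointwise, and since $\pi$ is merely upper semi-continuous this cannot be upgraded to a uniform positive lower bound on the compact coset.

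The paper fills exactly this gap, but at the cost of a substantial preliminary reduction that your outline omits. Using that $P$ is soluble (from $\pr(P,P)>0$), one bounds the non-$p$-soluble length of every finite quotient (Khukhro--Shumyatsky) and applies Wilson's structure theorem together with Lemma \ref{p-semisimple} to reduce to $G$ pro-$p$-soluble; then Hall--Higman bounds the $p$-length, and one may assume $O_p(G)=1$ and $G=MP$ with $M=O_{p'}(G)$. The Baire argument is then run on $M$ and produces $a\in M$ and an open $P$-invariant $N\unlhd M$ with $\pr(P,P^{ax})\geq\ep$ for $x\in N$. The key move, unavailable in your setup, is \emph{coprime action}: with $Q=C_P(M/N)$ open in $P$, one has $M=N\,C_M(Q)$, so $a$ may be taken in $C_M(Q)$; then $Q^{ax}=Q^{x}$, and the single-coset bound becomes $\pr(Q,Q^{x})\geq\ep$ for all $x\in N$, i.e.\ a uniform bound over \emph{all} Sylow $p$-subgroups of $QN$. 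Only now does Proposition \ref{profinite-easy} apply. Without the coprime reduction there is no mechanism to absorb the coset representative, and your bootstrap, as stated, does not go through.
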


As usual, we say that a profinite group has a property virtually if it has an open subgroup with that property. 

In the next section we collect some auxiliary results needed for  the proofs of the main theorems. In Section \ref{useful} we prove several propositions that provide important technical tools employed throughout the paper. The crucial Section \ref{simple} handles almost simple groups. Section \ref{thms1.1-1.3} provides proofs of Theorems \ref{main1} and \ref{main2} as well as corollaries.  A proof of Theorem \ref{main3} is given in Section \ref{thm3}.  Finally, Section \ref{thm4} deals with profinite groups.

\section{Preliminaries} 

We start with a lemma which lists some well-known properties of coprime actions (see for example \cite[Ch.~5 and 6]{go}).  In the sequel the lemma will often be used without explicit references.

\begin{lemma}\label{cc}
Let  $A$ be a group of coprime automorphisms of a finite group $G$. Then
\begin{enumerate}
\item[(i)] $G=[G,A]C_{G}(A)$. If $G$ is abelian, then $G=[G,A] \times C_{G}(A)$.
\item[(ii)] $[G,A,A]=[G,A]$. 
\item[(iii)] $C_{G/N}(A)=NC_G(A)/N$ for every $A$-invariant normal subgroup $N$ of $G$.
\item[(iv)] If $[G/\Phi(G),A]=1$, then $[G,A]=1$.
\end{enumerate}
\end{lemma}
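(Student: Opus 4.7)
The plan is to establish (i) first, then derive (ii), (iii), (iv) from it; of these, (iii) is the most substantial and is in fact already needed inside the inductive proof of (i), so in practice the two have to be proved together.

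I would begin with the abelian case of (i). Since $\gcd(|G|,|A|) = 1$, the order $|A|$ is invertible on $G$, and the norm map $g \mapsto \prod_{a \in A} g^a$ sends $G$ into $C_G(A)$ with kernel $[G,A]$; invertibility of $|A|$ then yields a direct decomposition $G = [G,A] \oplus C_G(A)$ (a Maschke-style argument). For general $G$, I would induct on $|G|$: choose an $A$-invariant minimal normal subgroup $M$, apply the statement to $G/M$, use (iii) to lift $A$-fixed cosets of $M$ to $A$-fixed elements of $G$, and treat $M$ itself by the abelian case (if $M$ is abelian) or by the $A$-invariance of $[M,A]$ together with the minimality of $M$ (if not).

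For (iii), the nontrivial inclusion says that every $A$-fixed coset $xN$ contains an $A$-fixed element. I would reduce to the case $N$ abelian by induction on $|N|$, using a proper $A$-invariant characteristic subgroup of $N$ (for instance a term of the derived series). In the abelian base case, the map $a \mapsto n_a := x^{-1} x^a$ is a $1$-cocycle $A \to N$, and coprimality of $|A|$ and $|N|$ forces it to be a coboundary: averaging over $A$ produces $m \in N$ with $n_a = m^{-1} m^a$, so $x m^{-1}$ is an $A$-fixed representative of $xN$.

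For (ii), apply (i) to $H := [G,A]$ to get $H = [H,A] \cdot C_H(A)$, and combine with $G = H \cdot C_G(A)$: every generator $[g,a]$ of $H$ can be rewritten as $[hc,a] = [h,a]^c$ with $h \in H$, $c \in C_G(A)$, which lies in $[H,A]$ because $[H,A]$ is normal in the semidirect product $GA$. Hence $[G,A] \subseteq [G,A,A]$; the reverse inclusion is automatic. For (iv), (i) gives $G = [G,A] \cdot C_G(A) \subseteq \Phi(G) \cdot C_G(A)$ under the hypothesis $[G,A] \subseteq \Phi(G)$, and the fact that $\Phi(G)$ consists of non-generators forces $C_G(A) = G$. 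The main obstacle is (iii) in the nonabelian case: coprime fixed-point lifting is the one place where $\gcd(|G|,|A|) = 1$ is used beyond the Maschke-style observation, and the inductions---on $|G|$ in (i) and on $|N|$ in (iii)---must be interleaved carefully so there is no circularity.
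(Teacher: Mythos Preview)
The paper does not give a proof of this lemma; it simply records these as well-known properties of coprime actions and refers the reader to Gorenstein's textbook \cite[Ch.~5 and~6]{go}. Your sketch is correct and follows the standard textbook line: the averaging/cohomological argument for (iii) in the abelian case, the Maschke-type splitting for abelian $G$ in (i), and the deductions of (ii) and (iv) from (i).

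Two minor comments. In your argument for (ii) the decomposition $H=[H,A]\,C_H(A)$ is never actually used: once $c\in C_G(A)$ you have $[hc,a]=[h^c,a]\in[[G,A],A]$ directly, since $h^c\in[G,A]$ by normality. And in the inductive step of (i) with $M$ nonabelian, rather than invoking minimality of $M$ (which would require checking that $[M,A]\trianglelefteq G$, not entirely obvious), it is cleaner simply to apply the inductive hypothesis to $M$ itself to get $M=[M,A]\,C_M(A)\subseteq[G,A]\,C_G(A)$.
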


The next lemma will be useful.
 
\begin{lemma}\label{covers}
Let $G$ be a group having subgroups $N_1, \dots ,N_s$ 
such that  $\cap_{i=1}^s N_i = 1$ but $\cap_{j \in J} N_j \neq 1$ for every proper subset $J$ of $\{1, \dots , s\}$.
Then $\cup_{i=1}^s N_i \neq G$.
\end{lemma}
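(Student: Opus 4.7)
The plan is to produce a single element of $G$ that lies outside every $N_i$. The hypothesis provides, for each index $i$, a supply of nontrivial elements belonging to all $N_j$ with $j \neq i$ but not to $N_i$; assembling such elements into one product should yield the desired witness.

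First I will set $M_i := \bigcap_{j \neq i} N_j$ and apply the hypothesis with the proper subset $J = \{1, \dots, s\} \setminus \{i\}$ to select a nontrivial element $x_i \in M_i$. Since $\bigcap_{j=1}^s N_j = 1$, such an $x_i$ cannot also lie in $N_i$ (otherwise it would sit in the full intersection), so $x_i \in N_j$ for every $j \neq i$ but $x_i \notin N_i$.

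Next I will consider the product $y := x_1 x_2 \cdots x_s$ and check that $y \notin N_i$ for each $i$. Writing $y = a\, x_i\, b$ with $a := x_1 \cdots x_{i-1}$ and $b := x_{i+1} \cdots x_s$, both $a$ and $b$ lie in $N_i$, being products of elements of $N_i$; hence $y \in N_i$ if and only if $a^{-1} y b^{-1} = x_i \in N_i$, which fails by construction. Therefore $y$ escapes $\bigcup_i N_i$, establishing $\bigcup_i N_i \neq G$.

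I do not anticipate a genuine obstacle here: the whole argument rests on the short cancellation identifying $y \in N_i$ with $x_i \in N_i$, which is immediate once each $x_j$ with $j \neq i$ has been placed inside $N_i$. It is worth noting that the argument uses neither finiteness of $G$ nor normality of the $N_i$, and that the minimality clause in the hypothesis (namely $\bigcap_{j \in J} N_j \neq 1$ for every proper $J$) is used only through the special case $J = \{1,\dots,s\} \setminus \{i\}$.
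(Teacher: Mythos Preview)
Your proof is correct. The argument is clean: each $x_j$ with $j\neq i$ lies in $N_i$, so the flanking products $a$ and $b$ are in $N_i$, and membership of $y=ax_ib$ in $N_i$ reduces to that of $x_i$, which fails.

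Your route differs from the paper's. The paper argues by induction on $s$: in a minimal counter-example it applies the inductive hypothesis to the subgroups $U_i=N_i\cap N_s$ of $N_s$ (for $i<s$) to obtain $y\in N_s\setminus\bigcup_{i<s}U_i$, then multiplies by a single nontrivial $x\in\bigcap_{i<s}N_i$ and checks that $xy$ avoids every $N_i$. So the paper uses two building blocks together with an inductive step, whereas you construct the witness directly as a product of $s$ elements, one per index, with no induction. Your approach is slightly more elementary and makes the role of the minimality hypothesis completely explicit (you use it exactly once per index, for $J=\{1,\dots,s\}\setminus\{i\}$); the paper's approach needs the hypothesis for more general proper subsets to set up the induction, but produces a shorter witness.
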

\begin{proof}
Suppose $G$ is a counter-example with $s$ as small as possible. Note that $N_s$ and its subgroups $U_i =N_i \cap N_s$, for $i \in \{1, \dots , s-1\}$, satisfy the hypothesis. By minimality, $N_s \neq  \cup_{i=1}^{s-1} U_i$. Moreover $X= \cap_{1 \le i \le s-1} N_i$ is nontrivial. Let us fix $1 \neq x \in X$ and $y \in N_s \setminus \cup_{i=1}^{s-1} U_i$. If $xy \in N_s$, then $x \in X \cap N_s =1$, a contradiction. If $xy \in N_i$, for some $i <s$,  then $y \in N_i \cap N_s =U_i$, a contradiction again. Therefore $xy \notin  \cup_{i=1}^s N_i$. 
\end{proof}

We will record here Neumann's theorem \cite{neumann} that was mentioned in the introduction. It describes the structure of finite groups with high commuting probability.

\begin{theorem}\label{neumann}
Let $G$ be a finite group with $\pr(G)\geq\ep>0$. Then $G$ has a normal subgroup $H$ such that $|H'|$ and $|G:H|$ are both $\ep$-bounded.
\end{theorem}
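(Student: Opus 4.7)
\medskip

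The first step is an averaging argument that converts the commuting-probability hypothesis into a statement about conjugacy-class sizes. Writing
\[
\pr(G) \;=\; \frac{1}{|G|^2}\sum_{x\in G}|C_G(x)|,
\]
a Markov-type inequality applied to $|C_G(x)|$ shows that the normal subset
\[
T := \{\,x\in G \,:\, |x^G| \le 2/\ep\,\}
\]
satisfies $|T| \ge (\ep/2)|G|$. Since $T$ is closed under $G$-conjugation, the subgroup $N := \langle T \rangle$ is normal in $G$, and $|N| \ge |T|$ already forces $[G:N] \le 2/\ep$, giving the $\ep$-bounded index part of the conclusion.

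To bound $|N'|$, I would appeal to the classical BFC theorem of B.H. Neumann: if every conjugacy class of a finite group has size at most $m$, then $|G'|$ is bounded by a function of $m$. The task is therefore to produce a further bounded-index normal subgroup $H \le N$ in which \emph{every} element -- not merely those in $T$ -- has $H$-conjugacy class of $\ep$-bounded size. Since $T$ generates $N$, one has $Z(N) = \bigcap_{x\in T} C_N(x)$, and each centralizer in this intersection has index at most $2/\ep$ in $N$. The plan is to build $H$ by intersecting an $\ep$-boundedly many $G$-cores of centralizers $C_G(x_i)$ with $x_i\in T$ (each such core has index at most $(2/\ep)!$ in $G$), exploiting a covering-style argument of the flavour of Lemma \ref{covers} to argue that boundedly many such centralizers already capture the relevant conjugation action of $H$ on itself. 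BFC applied to $H$ then yields $|H'| \le f(\ep)$, and $H$ is the required subgroup.

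The principal obstacle is precisely this refinement step: passing from a \emph{dense} set $T$ of elements with small centralizer index to a bounded-index normal subgroup in which \emph{all} elements have small centralizer index. A naive descending chain $N \supsetneq C_N(x_1) \supsetneq C_N(x_1)\cap C_N(x_2) \supsetneq \cdots$ shrinks by a factor of at least $2$ at each stage but can have length $\log_2[N:Z(N)]$, which on its own does not bound $[N:Z(N)]$. One must exploit the quantitative density $|T| \ge (\ep/2)|G|$ -- not merely the existence of some small-class elements -- to force termination after $\ep$-boundedly many steps. This quantitative control of the chain, combined with the BFC input, is the subtle heart of Neumann's original argument.
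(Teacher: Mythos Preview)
The paper does not supply a proof of Theorem~\ref{neumann}; it is recorded as a known result and attributed to \cite{neumann}. There is thus nothing in the paper to compare your attempt against.

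On the substance: your opening moves are exactly Neumann's. The averaging step producing the dense normal set $T=\{x:|x^G|\le 2/\ep\}$ with $|T|\ge(\ep/2)|G|$, and the observation that $N=\langle T\rangle$ is normal of index at most $2/\ep$, constitute the first half of his argument. You are also correct that the remaining work is to pass from ``every generator in $T$ has bounded $G$-class'' to a bounded-index normal subgroup in which \emph{all} elements have bounded class, so that the BFC theorem of B.\,H.~Neumann applies.

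Your proposed mechanisms for this passage, however, do not close the gap, and you essentially say so yourself. Lemma~\ref{covers} concerns a union of subgroups failing to cover a group and is not the relevant configuration here; and the descending chain of centraliser intersections that you describe is, as you observe, controlled only by $\log_2[N:Z(N)]$, which by itself does not bound $[N:Z(N)]$. What is missing is a concrete use of the density $|T|\ge(\ep/2)|G|$---not merely the existence of small-class elements---to force termination after $\ep$-boundedly many steps. Neumann's 1989 note resolves this with a short direct argument; absent that ingredient, your sketch is a faithful outline with the one genuinely non-trivial step left open.
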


The following lemmas on commuting probability for subgroups are taken from \cite{DLMS-finite}. 

\begin{lemma}\label{lem:2.2}
 Let $G$ be a finite group and let $H, K$ be subgroups of $G$.  Then 
\begin{enumerate}
\item If $N$ is a normal subgroup of $G$, then $\pr (HN/N,KN/N) \ge \pr (H,K)$.
\item If $H_0 \le H$, then $\pr (H_0,K) \ge \pr (H,K)$. 
\item If $G=G_1 \times G_2$, $H_i \le G_i$   and $K_i \le G_i$, then 
\[\pr (H_1 \times H_2, K_1 \times K_2) = \pr (H_1, K_1) \pr(H_2, K_2).\] 
\item\label{3/4} If $\pr (H,K) >3/4$, then $[H,K]=1$.
\end{enumerate}
\end{lemma}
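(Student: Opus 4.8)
\textbf{Proof proposal for Lemma \ref{lem:2.2}.}

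Throughout, it is convenient to rewrite $\pr(H,K)$ in a way that separates the roles of $H$ and $K$. Since $xy=yx$ with $x\in H$, $y\in K$ means precisely that $y\in C_K(x)$, we have
\[
\pr(H,K)=\frac{1}{|H|\,|K|}\sum_{x\in H}|C_K(x)|=\frac{1}{|H|}\sum_{x\in H}\frac{|C_K(x)|}{|K|},
\]
and by symmetry also $\pr(H,K)=\frac{1}{|K|}\sum_{y\in K}\frac{|C_H(y)|}{|H|}$. This is the identity I would use for every part.

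For part (1), I would fix a normal subgroup $N\trianglelefteq G$ and compare $\pr(HN/N,KN/N)$ with $\pr(H,K)$ by a counting/fibration argument. The natural map $H\times K\to (HN/N)\times(KN/N)$ is surjective with all fibres of the same size $\frac{|H|\,|K|}{|HN/N|\,|KN/N|}$; the point is that if $(xN,yN)$ is a commuting pair in $HN/N\times KN/N$, then its fibre in $H\times K$ contains \emph{at least} the commuting pairs lying above it — in fact one checks that the commuting pairs in $H\times K$ surject onto the commuting pairs in the quotient, so the number of commuting pairs upstairs is at least (fibre size) times the number downstairs. Dividing by $|H|\,|K|$ gives the inequality. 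Part (2) is immediate from the first displayed formula: replacing $H$ by a subgroup $H_0$ only averages $\tfrac{|C_K(x)|}{|K|}$ over the smaller set $H_0$, but since $\tfrac{|C_K(x)|}{|K|}\ge \tfrac{1}{|K|}$ is not by itself enough, the cleaner route is to note directly that the set of commuting pairs in $H_0\times K$ is the intersection of the set of commuting pairs in $H\times K$ with $H_0\times K$, and then use a standard averaging inequality (a subgroup meets a "dense" subset in at least the same density — this is exactly the statement that $\pr$ is monotone under passing to subgroups, which also follows from (1) applied inside $H$ together with a direct estimate). For part (3), with $G=G_1\times G_2$, a pair $((h_1,h_2),(k_1,k_2))$ commutes iff $h_1k_1=k_1h_1$ and $h_2k_2=k_2h_2$, so the set of commuting pairs in $(H_1\times H_2)\times(K_1\times K_2)$ is the product of the corresponding sets for the two factors; dividing by $|H_1\times H_2|\,|K_1\times K_2|=|H_1||H_2||K_1||K_2|$ factors the probability as claimed.

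For part (4), suppose $\pr(H,K)>3/4$ but $[H,K]\ne 1$. Then there is $x\in H$ with $C_K(x)\ne K$; since $C_K(x)$ is a subgroup of $K$, we get $|C_K(x)|\le |K|/2$, i.e. $\tfrac{|C_K(x)|}{|K|}\le 1/2$. I would then argue that "too many" $x\in H$ have this property: let $H_1=\{x\in H: C_K(x)=K\}=C_H(K)$, a subgroup of $H$. If $H_1=H$ then $[H,K]=1$, contrary to assumption, so $H_1$ is proper and $|H\setminus H_1|\ge |H|/2$. For $x\in H_1$ the summand $\tfrac{|C_K(x)|}{|K|}$ equals $1$; for $x\notin H_1$ it is at most $1/2$. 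Hence
\[
\pr(H,K)=\frac{1}{|H|}\sum_{x\in H}\frac{|C_K(x)|}{|K|}\le \frac{1}{|H|}\Big(|H_1|\cdot 1+(|H|-|H_1|)\cdot\frac12\Big)=\frac{1}{2}+\frac{|H_1|}{2|H|}\le \frac12+\frac14=\frac34,
\]
contradicting $\pr(H,K)>3/4$. (The same bound of course reproves the classical $\pr(G)\le 5/8$ with a sharper estimate using index $\ge p\ge 2$ more carefully, but $3/4$ is all that is needed here.)

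The only genuinely delicate point is part (1): one must verify that the set of commuting pairs in $H\times K$ really does surject onto the set of commuting pairs in the quotient, and that this forces the count upstairs to be at least (fibre size)$\times$(count downstairs) rather than merely $\ge$ count downstairs. The cleanest way I would present this is to observe that for $x\in H$, the natural map $C_K(x)\to C_{KN/N}(xN)$ need not be surjective, so instead I would average over $H$ first: for each coset $\bar x=xN\in HN/N$ choose a representative in $H$ and note $\sum_{x'\in xN\cap H}|C_K(x')|\ge |xN\cap H|\cdot|C_K(x)|$ is the wrong direction, so one really does need the surjectivity of commuting-pairs-onto-commuting-pairs. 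Granting that surjectivity (which holds because if $xN$ and $yN$ commute in the quotient then $[x,y]\in N$, and — the subtle part — one can adjust... actually $[x,y]\in N$ does not immediately give a commuting lift, so the correct argument is the one in \cite{DLMS-finite} using that the preimage of a commuting pair is a union of cosets of $(N\cap H)\times(N\cap K)$ on which the commutator map is constant modulo $N$, hence the preimage of the identity coset under $(h,k)\mapsto[h,k]N$ has size a multiple of $|N\cap H|\cdot|N\cap K|$ comparable to the fibre size), the inequality follows. I would simply cite \cite{DLMS-finite} for this part and give the short self-contained arguments above for (2), (3), (4).
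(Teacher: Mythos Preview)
The paper does not actually prove this lemma --- it is quoted from \cite{DLMS-finite}, with only the remark that (4) is the $n=m=2$ case of a sharper inequality proved there. Your self-contained arguments for (3) and (4) are correct and perfectly adequate.

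Your argument for (1), however, has the direction reversed. You aim for ``commuting pairs upstairs $\ge$ (fibre size)${}\times{}$commuting pairs downstairs'' via surjectivity of commuting pairs onto commuting pairs in the quotient; but that bound, after dividing by $|H|\,|K|$, would yield $\pr(H,K)\ge\pr(HN/N,KN/N)$, the \emph{wrong} inequality --- and surjectivity alone could never produce a factor of the fibre size anyway. (In fact surjectivity can fail: take $G=D_8$, $N=Z(G)$, $H=\langle r\rangle$, $K=\langle s\rangle$.) The correct argument is much simpler and needs no lifting whatsoever: if $(x,y)\in H\times K$ commute then trivially $(xN,yN)$ commute, so the commuting set $C_{\mathrm{up}}\subseteq H\times K$ satisfies $C_{\mathrm{up}}\subseteq\pi^{-1}(C_{\mathrm{down}})$, whence $|C_{\mathrm{up}}|\le|C_{\mathrm{down}}|\cdot|H\cap N|\,|K\cap N|$, and dividing by $|H|\,|K|$ gives $\pr(H,K)\le\pr(HN/N,KN/N)$. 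Your entire last paragraph is chasing a red herring.

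Part (2) is likewise not established. The ``standard averaging inequality'' you invoke (a subgroup meets a dense set in at least the same density) is false for general subsets, and (1) does not imply (2). The clean route is to use the symmetric form of your displayed identity, summing over $K$: for each $y\in K$ one has $C_{H_0}(y)=H_0\cap C_H(y)$, so $[H_0:C_{H_0}(y)]\le[H:C_H(y)]$ and therefore $|C_{H_0}(y)|/|H_0|\ge|C_H(y)|/|H|$; averaging over $y\in K$ gives $\pr(H_0,K)\ge\pr(H,K)$.
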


Note that Item \ref{3/4} is straightforward from Lemma 2.5 of \cite{DLMS-finite}, which states that, whenever $[H,K] \neq 1$, 
  $\pr (H,K) \le (n+m-1)/(nm)$ where $n=|H:C_H(K)|$ and $m=\min_{x \in H\setminus C_H(K)} |K:C_K(x)|$ (see also \cite[Remark 2.6]{DLMS-finite}).

\begin{lemma}\label{lem:2.9}
 Let $G$ be a finite group and let $H, K$ be subgroups of $G$ with $\pr (H,K) \ge \epsilon >0 $. Then 
 there exists a subset $X$ of $H$ such that for  $H_0=\langle X\rangle$ the following holds. 
 \begin{enumerate}
\item $|H:H_0| \le 2/\epsilon-1$; 
\item $|K: C_K(x)| \le 2/\epsilon$ for every $x \in X$;
\item 
  $|K: C_K(x)| \le (2/\epsilon)^{6/\epsilon}$  for every $x \in H_0$.
\end{enumerate}
\end{lemma}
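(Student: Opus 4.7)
I would set $X:=\{x\in H : |K:C_K(x)|\le 2/\ep\}$, enlarging if necessary so that $X=X^{-1}$ and $1\in X$ (this preserves the defining bound, since $C_K(x^{-1})=C_K(x)$ and $C_K(1)=K$), and let $H_0:=\langle X\rangle$. Rewriting
\[
\pr(H,K)=\frac{1}{|H|}\sum_{x\in H}\frac{1}{|K:C_K(x)|}\ge\ep,
\]
and splitting the sum over $X$ and $H\setminus X$ (where every summand is strictly less than $\ep/2$), a Markov-style estimate gives $|X|/|H|\ge \ep/(2-\ep)$. Part (1) follows immediately from $|H:H_0|\le |H|/|X|\le (2-\ep)/\ep=2/\ep-1$, and part (2) is built into the definition of $X$.

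For part (3), the workhorse is the submultiplicative identity $C_K(x_1\cdots x_k)\supseteq\bigcap_{i=1}^k C_K(x_i)$, giving $|K:C_K(x_1\cdots x_k)|\le (2/\ep)^k$ for any $x_1,\dots,x_k\in X$. It therefore suffices to establish the combinatorial claim that $X^{\lceil 6/\ep\rceil}=H_0$, i.e.\ a diameter bound on the Cayley graph of $H_0$ with respect to the symmetric generating set $X$. The inputs are $X=X^{-1}$, $1\in X$, $\langle X\rangle=H_0$, and $|X|/|H_0|\ge \alpha:=\ep/(2-\ep)$.

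The diameter bound is where the main work lies. My approach: once $|X^k|>|H_0|/2$, the inequality $|X^k\cap hX^k|\ge 2|X^k|-|H_0|>0$ valid for any $h\in H_0$ forces $h\in X^k\cdot X^{-k}=X^{2k}$, hence $X^{2k}=H_0$; so it suffices to push $|X^k|$ past $|H_0|/2$ within $\sim 1/\alpha\le 2/\ep$ steps. A Kneser--Kemperman-type inequality $|X^{k+1}|\ge |X^k|+|X|-|N|$, with $N$ the stabilizer of $X^{k+1}$, provides the needed per-step growth: either $N$ is small and $|X^{k+1}|\ge |X^k|+\alpha|H_0|/2$, or $N$ is large, in which case $X^{k+1}$ is a union of many large cosets and one can quotient by $N$ and induct. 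Tracking constants throughout yields the overall bound $k\le 6/\ep$. I expect this quantitative growth step to be the principal obstacle: the density $\alpha$ alone does not furnish strong growth via naive pigeonhole, which only locates powers $h^m\in X^2$ for small $m$, not $h$ itself, so a sumset-style argument is essential.
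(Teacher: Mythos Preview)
The paper does not prove this lemma; it is quoted from \cite{DLMS-finite}. Your argument for parts (1) and (2) is the standard one and is correct.

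For part (3), reducing to a Cayley-graph diameter bound is the right idea, and the shape of the constant $(2/\ep)^{6/\ep}$ strongly suggests this is what the original proof does as well. However, you overcomplicate the diameter estimate. Kneser's inequality is an abelian-group statement, and its nonabelian analogues (Kemperman, Olson, DeVos) carry side conditions that make your induction-on-quotients sketch awkward to push through with the stated constant; you yourself flag this step as the principal obstacle. In fact no sumset machinery is needed: the following elementary argument works in every group. With $d$ the diameter of $H_0$ with respect to $X$, set $S_i = X^i\setminus X^{i-1}$ for $0\le i\le d$, so that the $S_i$ partition $H_0$. For any $g\in S_i$ one has $gX\subseteq X^{i+1}$ while $gX\cap X^{i-2}=\emptyset$ (otherwise $g\in X^{i-2}X^{-1}=X^{i-1}$), hence $|X|=|gX|\le |S_{i-1}|+|S_i|+|S_{i+1}|$. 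Summing over $1\le i\le d$ and using $\sum_i |S_i|=|H_0|$ yields $d\,|X|\le 3|H_0|$, so
\[
d \ \le\ \frac{3|H_0|}{|X|} \ \le\ \frac{3|H|}{|X|} \ \le\ \frac{3(2-\ep)}{\ep} \ =\ \frac{6}{\ep}-3 \ <\ \frac{6}{\ep},
\]
which is exactly the exponent you need for (3).
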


A large part of this paper addresses the following hypothesis.

\begin{hypothesis}\label{basic} Let $\ep$ be a positive number and $P$ a $p$-subgroup of a finite group $G$ such that $\pr(P,P^x)\geq\ep$ for every $x\in G$.
\end{hypothesis}

We remark that the structure of the subgroup $P$ is as described in Neumann's Theorem \ref{neumann}. Moreover,  Lemma \ref{lem:2.2} (2) shows that if $H\leq P$, then 
$\pr(H,H^x)\geq\ep$ for every $x\in G$.

The following lemma shows that Hypothesis \ref{basic} is preserved under taking subgroups and quotients. Throughout the paper it is often used without explicit mention. 
\begin{lemma}\label{trivial}
Assume Hypothesis \ref{basic}.
\begin{enumerate} 
\item If $H \le G$, then the pair $(H, P \cap H)$ satisfies Hypothesis \ref{basic}. 
\item If $N \unlhd G$, then the pair $(G/N, PN/N)$ satisfies Hypothesis \ref{basic}.
\end{enumerate}
\end{lemma}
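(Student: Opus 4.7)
The plan is to verify both items by directly applying the monotonicity results in Lemma \ref{lem:2.2}. For (1), the key observation is that for $h\in H$ we have $(P\cap H)^h=P^h\cap H^h=P^h\cap H$, since conjugation by an element of $H$ preserves $H$. Now I would apply Lemma \ref{lem:2.2}(2) in each coordinate: first with $P\cap H\le P$ to get $\pr(P\cap H, P^h\cap H)\ge \pr(P, P^h\cap H)$, and then with $P^h\cap H\le P^h$ to get $\pr(P, P^h\cap H)\ge \pr(P, P^h)$. Chaining these with Hypothesis \ref{basic} yields
\[
\pr(P\cap H,(P\cap H)^h)\ge \pr(P,P^h)\ge \ep,
\]
and since $P\cap H$ is automatically a $p$-subgroup of $H$, the pair $(H,P\cap H)$ indeed satisfies Hypothesis \ref{basic}.

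For (2), I would note that conjugation in $G/N$ by a coset $gN$ gives $(PN/N)^{gN}=P^g N/N$, so that both subgroups in question are images of conjugate $p$-subgroups under the projection $G\to G/N$. Lemma \ref{lem:2.2}(1) applied to $H=P$ and $K=P^g$ then gives
\[
\pr(PN/N,\,P^g N/N)\ge \pr(P,P^g)\ge \ep,
\]
and $PN/N$ is a $p$-subgroup of $G/N$ because $P$ is a $p$-subgroup of $G$. No serious obstacle is expected: the entire statement is a formal consequence of the two monotonicity items of Lemma \ref{lem:2.2}, the standard identity $(P\cap H)^h=P^h\cap H$ for $h\in H$, and the fact that the classes of $p$-subgroups are closed under intersection with subgroups and under taking images in quotients.
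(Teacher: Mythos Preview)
Your proof is correct and follows exactly the approach indicated in the paper, which simply says the lemma is ``a straightforward application of Lemma~\ref{lem:2.2}.'' You have merely spelled out the details of that straightforward application.
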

\begin{proof}
It is a straightforward application of Lemma \ref{lem:2.2}. 
\end{proof}

In what follows we often say that a quantity is bounded to mean that it is bounded in terms of $\ep$ only. More generally, we say that a quantity is $(a,b,c\dots)$-bounded to mean that it is bounded in terms of the parameters $a,b,c\dots$.

\begin{lemma}\label{lemH_0} 
Assume Hypothesis \ref{basic}. There is a bounded number $t$ such that for every $g\in G$ there is a normal subgroup $P_g$ of $P$ with the properties that $|P : P_g|\le t$ and 
$|P^g: C_{P^g} (x)| \le t$ whenever $x \in P_g$.

Conversely, let $t$ be a positive integer and $P$ a subgroup of a finite group $G$ such that for every $g\in G$ there is a normal subgroup $P_g$ of $P$ with the properties that $|P : P_g|\le t$ and 
$|P^g: C_{P^g} (x)| \le t$ whenever $x \in P_g$. Then $\pr(P,P^g)\geq 1/t^2$ for every $g\in G$.
\end{lemma}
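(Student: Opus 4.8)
For the forward assertion, I would fix $g \in G$ and apply Lemma \ref{lem:2.9} to the pair $(P, P^g)$, which satisfies $\pr(P, P^g) \ge \ep$ by Hypothesis \ref{basic}. This produces a subgroup $H_0 = \langle X \rangle \le P$ with $|P : H_0| \le 2/\ep - 1$ and $|P^g : C_{P^g}(x)| \le (2/\ep)^{6/\ep}$ for every $x \in H_0$. Since $H_0$ need not be normal in $P$, the plan is to replace it by its normal core $P_g := \bigcap_{a \in P} H_0^a \unlhd P$. As $P$ acts on the at most $2/\ep$ cosets of $H_0$ with kernel $P_g$, the index $|P : P_g|$ is $\ep$-bounded; and since $P_g \subseteq H_0$, every $x \in P_g$ still satisfies $|P^g : C_{P^g}(x)| \le (2/\ep)^{6/\ep}$. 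Taking $t$ to be the larger of this bound on $|P:P_g|$ and the number $(2/\ep)^{6/\ep}$ — a quantity depending only on $\ep$ and independent of $g$ — gives the assertion.

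For the converse, I would fix $g \in G$ together with the given normal subgroup $P_g \unlhd P$ and count commuting pairs directly: for each $a \in P_g$ there are $|C_{P^g}(a)| \ge |P^g|/t$ elements $b \in P^g$ with $ab = ba$, hence
\[
|\{(a,b) \in P \times P^g : ab = ba\}| \;\ge\; \sum_{a \in P_g} |C_{P^g}(a)| \;\ge\; |P_g|\cdot \frac{|P^g|}{t} \;\ge\; \frac{|P|}{t}\cdot\frac{|P^g|}{t},
\]
using $|P : P_g| \le t$. Dividing by $|P|\,|P^g|$ yields $\pr(P, P^g) \ge 1/t^2$.

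The only delicate point is, in the forward direction, upgrading the subgroup $H_0$ supplied by Lemma \ref{lem:2.9} — over which one has no normality control — to a genuinely normal subgroup of $P$ of $\ep$-bounded index; this is handled routinely via the normal core, using that $H_0$ has $\ep$-boundedly many conjugates in $P$. Everything else is elementary counting.
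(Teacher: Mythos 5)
Your proposal is correct and matches the paper's proof essentially verbatim: the paper likewise invokes Lemma \ref{lem:2.9} to obtain a subgroup $Q_g$ of $\ep$-bounded index with the centralizer bound, then takes the normal core $P_g = \bigcap_{y\in P} Q_g^y$, and dismisses the converse as straightforward. Your explicit bound on $|P:P_g|$ via the coset action and the counting argument for the converse are exactly the routine details the paper leaves to the reader.
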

\begin{proof} Lemma \ref{lem:2.9} 
 guarantees that there exists a subgroup $Q_g$ of $P$ such that $|P:Q_g| \le 2/\epsilon-1$ and $|P^g : C_{P^g}(x)| \le (2/\epsilon)^{6/\epsilon}$  for every  $x \in  Q_g$. 
 Then the subgroup $P_{g} = \cap_{y \in P} Q_{g}^y$ is a normal subgroup of $P$ with the required properties. 
 
The converse statement is straightforward. 
 \end{proof}

\begin{lemma} \label{lift} For a prime $p$ and a positive number $\epsilon$ there is a positive number $\delta=\delta(p,\epsilon)$ with the following property. Let $G$ be a finite group and $Z$ a cyclic subgroup of $Z(G)$. Set $\bar{G}=G/Z$. Assume that $G$ contains an elementary abelian $p$-subgroup $P$ such that $\pr(\bar{P},\bar{P}^{\bar{g}})\geq\epsilon$ for every $\bar{g}\in\bar{G}$. Then $\pr({P},P^{g})\geq\delta$ for every $g\in G$. 
\end{lemma}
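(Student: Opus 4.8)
The plan is to reduce to counting commuting pairs and exploit the fact that, for an elementary abelian $p$-group, the condition $\pr(\bar P,\bar P^{\bar g})\ge\epsilon$ controls the size of centralizers inside $\bar P^{\bar g}$ of elements of a large subgroup of $\bar P$, via Lemma \ref{lemH_0}. Fix $g\in G$. By Lemma \ref{lemH_0} applied in $\bar G$ there is a bounded number $t=t(\epsilon)$ and a subgroup $\bar P_{\bar g}$ of $\bar P$ with $|\bar P:\bar P_{\bar g}|\le t$ and $|\bar P^{\bar g}:C_{\bar P^{\bar g}}(\bar x)|\le t$ for all $\bar x\in\bar P_{\bar g}$. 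Let $P_g$ be the full preimage of $\bar P_{\bar g}$ intersected with $P$; since $P$ is elementary abelian and $Z\cap P$ has order dividing $p$, we get $|P:P_g|\le tp$. The point is now to lift the centralizer bound from $\bar P^{\bar g}=P^gZ/Z$ back to $P^g$.

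The key step is the following. Take $x\in P_g$ and write $\bar y$ for a typical element of $\bar P^{\bar g}$ centralizing $\bar x$; lift $\bar y$ to $y\in P^g$. Then $[x,y]\in Z$. Since $P^g$ is an elementary abelian $p$-group and $Z$ is central, the map $y\mapsto [x,y]$ is a homomorphism from $C:=\{y\in P^g: [x,y]\in Z\}$ into $Z$, and its image lies in the subgroup of $Z$ of elements of order dividing $p$ (because $x^p=1$ forces $[x,y]^p=[x^p,y]=1$ in this abelian setting). That subgroup has order at most $p$. Hence $|C:C_{P^g}(x)|\le p$, and since $|P^g:CZ\cap P^g|\le|\bar P^{\bar g}:C_{\bar P^{\bar g}}(\bar x)|\le t$ we obtain $|P^g:C_{P^g}(x)|\le tp^{2}$ (being a bit generous with the factors of $p$ coming from $Z\cap P^g$). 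Thus $P$ and the subgroup $P_g\le P$ satisfy, in $G$, exactly the hypothesis of the converse half of Lemma \ref{lemH_0} with the bounded constant $t'=\max(tp, tp^2)$, so $\pr(P,P^g)\ge 1/(t')^{2}=:\delta(p,\epsilon)$. Since $g$ was arbitrary, this proves the lemma.

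The main obstacle is the commutator-lifting step: one must ensure that when passing from $\bar P^{\bar g}$ to $P^g$ the "extra" index contributed by the central subgroup $Z$ is bounded, and here it is essential that $P$ (hence $P^g$) is \emph{elementary abelian}, so that $y\mapsto[x,y]$ is a homomorphism with image of exponent dividing $p$; without the elementary abelian hypothesis the image of such a commutator map in $Z$ could be as large as $Z$ itself, which is only assumed cyclic and could have unbounded order, and the conclusion would fail. A secondary point to check carefully is that $\delta$ is genuinely allowed to depend on $p$: the bound $tp^{2}$ involves $p$, and this is why the statement records $\delta=\delta(p,\epsilon)$ rather than a bound in $\epsilon$ alone.
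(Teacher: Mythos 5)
Your proof is correct and follows essentially the same route as the paper: apply Lemma \ref{lemH_0} in $\bar G$, lift the resulting subgroup and centralizer bounds back to $G$, and control the extra index coming from $Z$ by using that $P$ is elementary abelian and $Z$ is cyclic so that the commutators $[x,y]$ land in a subgroup of $Z$ of order at most $p$, finishing with the converse half of Lemma \ref{lemH_0}. Your phrasing of the key step via the homomorphism $y\mapsto[x,y]$ from $C=\{y\in P^g:[x,y]\in Z\}$ to $Z$ is a clean variant of the paper's observation that $\langle K,x\rangle$ is class-two and generated by elements of order $p$, and the only differences are minor bookkeeping (your index bounds $tp$ for $P_g$ and $tp^2$ for the centralizer are a bit more generous than the paper's $t$ and $tp$, but still bounded in $p$ and $\epsilon$).
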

\begin{proof}
 Observe that $P\cap Z$ has order at most $p$. In view of Lemma \ref{lemH_0} there is an $\ep$-bounded integer $t$ such that  the subgroup $\bar{P}^g$ contains a subgroup $\bar{P_{0}}$ of index at most $t$ such that
 $|\bar P^g:C_{\bar P^g}(\bar x)| \le t$, for every $\bar x \in \bar{P_{0}}$.  Let $P_{0}$ be the preimage of $\bar{P_{0}}$ in $P$.
 
Note that $P_{0}$ has index at most $t$ in $P$. 
 Let $x\in P_{0}$. It follows that $\bar{x}$ has centralizer of index at most $t$ in  $\bar P^g$. Therefore $P^g$ contains a subgroup $K$ of index at most $t$ such that $[K,x]\leq Z$.  
 So $\langle K,x\rangle$ is nilpotent of class at most 2. As $\langle K,x\rangle$ is generated by elements of order $p$, we deduce that the commutator subgroup $\langle K,x\rangle'$ has exponent dividing $p$. Since $\langle K,x\rangle'\leq Z$ and since $Z$ is cyclic, it follows that the order of $\langle K,x\rangle'$ is at most $p$. Hence $C_K(x)$ has index at most $p$ in $K$ and index at most $pt$ in $P^g$. So every element of $P_{0}$ has centralizer of index at most $pt$ in $P^g$. Now the `converse' part of Lemma \ref{lemH_0} shows that there is $\delta$ with the required properties.
 \end{proof}
 
The following lemma provides a crucial technical tool which is used throughout the paper on multiple occasions.

\begin{lemma}\label{lemQ} 
Assume Hypothesis \ref{basic} and let $a \in G$. Then $P$ contains a normal subgroup $Q_a$ of bounded index with the property that, for every $g\in Q_a$, there exists a subgroup $D_{a,g}$ of bounded index in $Q_a$ such that $\langle g^P\rangle$ centralizes $[a,D_{a,g}]$.
\end{lemma}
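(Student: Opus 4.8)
My plan is to apply Lemma \ref{lemH_0} twice: once to the identity element and once to $a$. This produces a bounded number $t$ together with normal subgroups $P_1,P_a\unlhd P$, each of index at most $t$, such that $|P:C_P(x)|\le t$ for every $x\in P_1$ and $|P^a:C_{P^a}(y)|\le t$ for every $y\in P_a$. I would then set $Q_a:=P_1\cap P_a$, a normal subgroup of $P$ of bounded index.

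Fix $g\in Q_a$. The key observation is that, although $\langle g^P\rangle$ may be a large subgroup of $P$, it is generated by a bounded number of elements: it is generated by the set $g^P$ of $P$-conjugates of $g$, which has size $|P:C_P(g)|\le t$ since $g\in P_1$, and, as $Q_a\unlhd P$, each of these conjugates again lies in $Q_a\le P_1\cap P_a$. Hence $C_P(\langle g^P\rangle)$ is an intersection of at most $t$ subgroups of index at most $t$ in $P$, so $|P:C_P(\langle g^P\rangle)|$ is bounded; likewise $|P^a:C_{P^a}(\langle g^P\rangle)|$ is bounded. I would now put
\[ D_{a,g}:=Q_a\cap C_P(\langle g^P\rangle)\cap C_{P^a}(\langle g^P\rangle)^{a^{-1}}. \]
Since $C_{P^a}(\langle g^P\rangle)^{a^{-1}}$ is a subgroup of $(P^a)^{a^{-1}}=P$ of index equal to $|P^a:C_{P^a}(\langle g^P\rangle)|$, the subgroup $D_{a,g}$ has bounded index in $Q_a$. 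Finally, for $d\in D_{a,g}$ one has the identity $[a,d]=(d^a)^{-1}d$ with $d\in P$ and $d^a\in P^a$; by construction $d\in C_P(\langle g^P\rangle)$, while $d^a\in C_{P^a}(\langle g^P\rangle)$ because $d$ lies in the $a^{-1}$-conjugate of the latter. Thus every element of $\langle g^P\rangle$ commutes with both $d$ and $d^a$, hence with $[a,d]$; since $[a,D_{a,g}]$ is generated by the elements $[a,d]$ for $d\in D_{a,g}$, it follows that $\langle g^P\rangle$ centralizes $[a,D_{a,g}]$.

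The only genuinely non-formal step is the middle one. One should not try to control $\langle g^P\rangle$ as a whole — its centralizers in $P$ and $P^a$ need not have bounded index a priori; the point is that restricting $g$ to the bounded-index subgroup $P_1$, where centralizers in $P$ are of bounded index, makes $\langle g^P\rangle$ boundedly generated. After that the bounded-index centralizers of $\langle g^P\rangle$ in $P$ and in $P^a$ are immediate, and the elementary commutator identity $[a,d]=(d^a)^{-1}d$ finishes the argument.
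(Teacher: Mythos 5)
Your proof is correct and takes essentially the same approach as the paper's: apply Lemma \ref{lemH_0} with $g=1$ and $g=a$ to obtain two normal subgroups of $P$ of bounded index in which elements have bounded-index centralizers in $P$ and in $P^a$ respectively, set $Q_a$ to be their intersection, use the bounded generation of $\langle g^P\rangle$ by the at most $t$ conjugates of $g$ to bound the indices of $C_P(\langle g^P\rangle)$ and $C_{P^a}(\langle g^P\rangle)$, and finish via the identity $[a,d]=(d^a)^{-1}d$. The constructions and argument coincide with those in the paper.
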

\begin{proof}
By Lemma \ref{lemH_0} applied to the subgroups $P$ and $P^a$, respectively, there exist two normal subgroups $Q_1$ and $Q_2$ of index at most $t$ in $P$  such that the elements in $Q_1$ have centralizers of index at most $t$ in $P$ and the elements in $Q_2$ have centralizers of index at most $t$ in $P^a$. Let $Q=Q_1\cap Q_2$ and for an element $g\in Q$ set $L=\langle g^{P} \rangle$. Note that $L $ is contained in $Q$. Moreover, as $g \in Q$, the subgroup $L$ is generated by boundedly many conjugates of $g$, all of which have centralizers of index at most $t$ in both $P$ and $P^a$. Thus $C_1=C_{P} (L)$ and $C_2=C_{P^a} (L)$ have bounded index in $P$ and $P^a,$ respectively. Let $D=C_2^{a^{-1}} \cap C_1 \cap \Q$ and note that $D$ is normal in $P$. The subgroup $D$ has bounded index in $Q$ and moreover  $L$ centralizes both $D$ and $D^a$. It follows that  $L$ centralizes the subgroup  $[a,D]$. 
\end{proof}

\begin{lemma}\label{exp-bounded}
Assume Hypothesis \ref{basic}. Then there is a bounded integer $e$ such that $P^e \le F(G)$.\end{lemma}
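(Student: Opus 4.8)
The plan is to reduce everything to cyclic subgroups of $P$ and to invoke Lemma \ref{lemH_0}; in fact no reduction to $O_p(G)=1$ is needed. Fix $g\in P$. Since $\langle g\rangle\le P$, Lemma \ref{lem:2.2}(2) together with the symmetry of $\pr$ gives $\pr(\langle g\rangle,\langle g\rangle^x)\ge\pr(P,P^x)\ge\ep$ for every $x\in G$, so the pair $(\langle g\rangle,G)$ again satisfies Hypothesis \ref{basic} with the same $\ep$. Let $t$ be the $\ep$-bounded constant furnished by Lemma \ref{lemH_0}; since it depends only on $\ep$, the same $t$ serves for every $g$. Applying Lemma \ref{lemH_0} to $\langle g\rangle$ in place of $P$, for each $x\in G$ there is a subgroup of $\langle g\rangle$ of index at most $t$ all of whose elements have at most $t$ conjugates in $\langle g\rangle^x$.

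Here is the crucial point: because $\langle g\rangle$ is cyclic, a subgroup of index at most $t$ in the cyclic $p$-group $\langle g\rangle$ necessarily contains $\langle g^{p^{j}}\rangle$, where $j:=\lfloor\log_p t\rfloor$ is $\ep$-bounded and independent of $g$. Put $h:=g^{p^{j}}$. Then $|\langle g\rangle^x:C_{\langle g\rangle^x}(h)|\le t$ for every $x\in G$, and since $\langle g\rangle^x=\langle g^x\rangle$ is again cyclic, its subgroup $C_{\langle g^x\rangle}(h)$ of index at most $t$ contains $(g^x)^{p^{j}}=h^x$. Hence $h^x\in C_G(h)$, i.e. $[h,h^x]=1$, for every $x\in G$. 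Conjugating, any two $G$-conjugates of $h$ commute, so the normal closure $\langle h^G\rangle$ is an abelian group generated by $p$-elements, that is, an abelian normal $p$-subgroup of $G$. Therefore $h=g^{p^{j}}\in\langle h^G\rangle\le O_p(G)$.

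Since $j$ is independent of $g$ and $e:=p^{j}\le t$ is $\ep$-bounded, we conclude $g^{e}\in O_p(G)$ for every $g\in P$, whence $P^{e}\le O_p(G)\le F(G)$, as required. The argument is short; the only step needing care is the passage to cyclic subgroups, which is precisely what converts the soft conclusion ``index at most $t$'' of Lemma \ref{lemH_0} into the rigid statement ``contains $\langle g^{p^{j}}\rangle$'' with exponent $j$ uniform in $g$ — without this uniformity the conjugates of $h$ would not be forced to commute.
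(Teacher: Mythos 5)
Your proof is correct and uses the same core idea as the paper's: invoke Lemma \ref{lemH_0} to produce a bounded power $g^e$ of each $g\in P$ that commutes with all its $G$-conjugates, whence $\langle (g^e)^G\rangle$ is an abelian normal $p$-subgroup and $g^e\in O_p(G)\le F(G)$. The paper applies Lemma \ref{lemH_0} to $P$ itself and passes through the intermediate observation that $P^m\le P_x$ for a bounded $m$; your restriction to the cyclic subgroup $\langle g\rangle$ (legitimate by the remark following Hypothesis \ref{basic}) makes the exponent explicit, namely $e=p^{\lfloor\log_p t\rfloor}\le t$, and collapses the paper's two centralizer steps into one, a slight but genuine streamlining of the same argument.
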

\begin{proof} 
Choose any $x\in G$ and let $t$ and $P_x$ be as in Lemma \ref{lemH_0}.  There is a bounded integer $m$ such that $P^m\le P_x$. Therefore $P^m\le\cap_{x\in G}P_x$, which implies that for every element $g$ in $P$ the power $g^m$ has a centralizer of  index at most $t$ in every conjugate $P^x$. It follows that there exists a bounded integer $i$ such that $((g^m)^x)^i$ centralizes $g^m$. We conclude that $g^{im}$ commutes with all of its conjugates and therefore $g^{im} \in F(G)$. 
\end{proof}

\begin{lemma}\label{p-bounded}
Assume Hypothesis \ref{basic}. If $p$ divides the order of $G/O_p(G)$, then  $p$ is bounded. 
\end{lemma}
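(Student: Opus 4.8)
The plan is to combine Lemma~\ref{exp-bounded} with the elementary fact that raising to a power coprime to $p$ is surjective on a $p$-group. First I would pass to $\bar G=G/O_p(G)$ with $\bar P=PO_p(G)/O_p(G)$: by Lemma~\ref{trivial} the pair $(\bar G,\bar P)$ again satisfies Hypothesis~\ref{basic}, we have $O_p(\bar G)=1$, and $p$ still divides $|\bar G|$. Since the statement only concerns bounding the prime $p$, we may replace $G$ by $\bar G$ and so assume $O_p(G)=1$; the case of interest is then $P\neq 1$, and it suffices to show $p$ is $\epsilon$-bounded under these assumptions.

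By Lemma~\ref{exp-bounded} fix an $\epsilon$-bounded integer $e$ with $P^e\le F(G)$, and suppose $p>e$. Then $\gcd(p,e)=1$, so for every $x\in P$ the integer $e$ is invertible modulo the (necessarily $p$-power) order of $x$, whence $x\in\langle x^e\rangle\le P^e$; thus $P=P^e\le F(G)$. Since $F(G)$ is nilpotent, the set of its elements of $p$-power order is exactly the characteristic subgroup $O_p(F(G))$, which is therefore a normal $p$-subgroup of $G$ and hence contained in $O_p(G)=1$. As $P$ consists of $p$-elements we get $P\le O_p(F(G))=1$, contradicting $P\neq 1$. Hence $p\le e$, and $p$ is $\epsilon$-bounded.

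There is no serious obstacle here; the only point needing care is the bookkeeping around $O_p(G)$, namely that the hypothesis ``$p$ divides $|G/O_p(G)|$'' is really being used in the form ``$P\not\le O_p(G)$''. If one prefers an argument not relying on Lemma~\ref{exp-bounded}, one can instead pick $g\in P$ of order $p$, set $H=\langle g\rangle$, and use $\pr(H,H^x)\ge\pr(P,P^x)\ge\epsilon$ (Lemma~\ref{lem:2.2}(2)): whenever $[H,H^x]\neq 1$ one has $H\cap H^x=1$ with neither factor centralising the other, so exactly $2p-1$ of the $p^2$ pairs in $H\times H^x$ commute and $\pr(H,H^x)=(2p-1)/p^2<2/p$. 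Thus $p\ge 2/\epsilon$ forces $[g,g^x]=1$ for all $x\in G$, so all conjugates of $g$ commute pairwise and $\langle g^G\rangle$ is a nontrivial normal elementary abelian $p$-subgroup, again contradicting $O_p(G)=1$; hence $p<2/\epsilon$.
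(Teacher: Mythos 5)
Your primary argument is essentially the paper's own proof, just with the details filled in. The paper writes only ``Let $e$ be as in Lemma~\ref{exp-bounded}. We know that $P^e\le F(G)$. Since $p$ divides the order of $G/O_p(G)$, it follows that $p\le e$,'' and the content of that final deduction is exactly what you supply: if $p>e$ then $e$ is prime to $p$, so $P^e=P\le F(G)$ and hence $P\le O_p(F(G))\le O_p(G)$. Your observation that the hypothesis is really being used in the form ``$P\not\le O_p(G)$'' is correct, and the paper's terse proof tacitly relies on the same reading; as literally written the statement would fail if, say, $P\le O_p(G)$ while some other $p$-element of $G$ survives modulo $O_p(G)$. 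In the paper's actual applications (e.g.\ the Remark before Lemma~\ref{alter}, where $G$ is almost simple and $F(G)=1$) one indeed has $P\not\le O_p(G)$, so nothing is lost.

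Your alternative argument --- take $g\in P$ of order $p$, set $H=\langle g\rangle$, and compute $\pr(H,H^x)=(2p-1)/p^2$ whenever $[H,H^x]\ne1$ --- is a genuine shortcut. It sidesteps Lemma~\ref{exp-bounded} (and with it Lemmas~\ref{lemH_0} and~\ref{lem:2.9}) entirely, relying only on Lemma~\ref{lem:2.2}(2), and it yields the explicit bound $p<2/\epsilon$ rather than the implicit bound $e(\epsilon)$. The count is right: for $H,H^x$ of order $p$ with $[H,H^x]\ne1$ one has $H\cap H^x=1$ and $C_{H^x}(h)=1$ for every $h\ne1$, so exactly $2p-1$ of the $p^2$ pairs commute; and if this never falls below $\epsilon$ then all conjugates of $g$ pairwise commute, giving a nontrivial normal elementary abelian $p$-subgroup, contradicting $O_p(G)=1$. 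Both proofs are correct.
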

\begin{proof}  Let $e$ be as in Lemma \ref{exp-bounded}. We know that $P^e\le F(G)$. Since $p$ divides the order of $G/O_p(G)$, it follows that $p\leq e$.
\end{proof}

\section{Useful propositions}\label{useful}

Throughout this section we assume Hypothesis \ref{basic}. Recall that we say that a quantity is bounded if and only if it is bounded in terms of $\ep$ only.

\begin{proposition}\label{soluble} Suppose that $G=MP$, where $M=F(G)$ and assume that $O_p(G)=1$. Then the order of $P$ is bounded.
\end{proposition}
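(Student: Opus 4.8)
My plan is to exploit the coprime action of $P$ on $M = F(G)$ together with Hypothesis \ref{basic}. Since $O_p(G) = 1$ and $M = F(G)$, the subgroup $M$ is a $p'$-group (as $O_p(G) \geq O_p(M)$ and $M$ nilpotent forces its $p$-part into $O_p(G)$), so $P$ acts coprimely on $M$. The first step is to reduce to the case where $M$ is an elementary abelian $q$-group for a single prime $q \neq p$: indeed $M = \prod_q O_q(M)$, and using Lemma \ref{cc}(iv) it suffices to control the action of $P$ on $M/\Phi(M)$, which is a direct product of such elementary abelian pieces; one then has to check that the boundedness obtained on each factor assembles into boundedness of $P$ modulo $C_P(M)$, and since $C_P(M) \leq C_G(F(G)) \cap P = 1$ (as $G$ is soluble, $C_G(F(G)) \leq F(G)$, forcing $C_P(M) \leq O_p(G) = 1$), this will give the bound on $|P|$ itself. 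So assume $M = V$ is an $\F_q P$-module, faithful, with $P$ acting coprimely.

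The core of the argument uses Lemma \ref{lemQ}. Fix any $a \in M$; then $P$ has a normal subgroup $Q_a$ of bounded index such that for every $g \in Q_a$ there is $D_{a,g} \leq Q_a$ of bounded index with $\langle g^P \rangle$ centralizing $[a, D_{a,g}]$. Translating to module language: writing the action additively, $[a,D_{a,g}]$ is the subgroup (subspace) $\langle a \cdot (d-1) : d \in D_{a,g}\rangle$, and $g$ acts trivially on it. The plan is to pick $a$ ranging over a generating set of $V$ — or better, use that $V$ is generated by finitely many elements after passing to a bounded-index subgroup — and intersect the resulting $Q_a$ to get a single normal subgroup $Q$ of bounded index in $P$. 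Then for $g \in Q$ and each chosen $a$, the element $g$ centralizes $[a,D_{a,g}]$ where $D_{a,g}$ has bounded index, hence (using coprimality and Lemma \ref{cc}) $[a, D_{a,g}]$ together with $C_V(D_{a,g})$-type complements should account for most of $V$. The key point I want to extract is that $g$ centralizes a subspace of bounded codimension in $V$, uniformly; combined with $g$ being a $p$-element acting coprimely, a bounded-codimension fixed space should force $g$ into a bounded subgroup, and ultimately force $Q' $ or even $Q$ itself to be small.

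The main obstacle I anticipate is the bookkeeping in the previous step: Lemma \ref{lemQ} gives, for each $g$, a subgroup $D_{a,g}$ depending on $g$, and one must arrange uniformity so that a single bounded-codimension subspace of $V$ works for all $g \in Q$ simultaneously, or else argue iteratively. One natural route: apply Lemma \ref{lemQ} with $a$ itself a nontrivial element of $V$ and run the argument to show $[V, Q]$ is "small" in a suitable sense, then use that a coprime $p$-group acting with bounded-dimensional commutator on a module is bounded (this is a standard consequence — the action factors through a bounded quotient). Handling the non-abelian-$M$ case and the primes $q$ dividing $|M|$ with $q$ possibly large requires care: one should note that $P$ acting faithfully and coprimely with bounded fixed-point codimension on $\bigoplus_q O_q(M)/\Phi(O_q(M))$ bounds $|P|$ regardless of the $q$'s, because the relevant bound on $|P|$ comes from $\dim$ of commutators, not from $q$. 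I would also keep Lemma \ref{exp-bounded} in reserve: it already gives $P^e \le F(G) = M$, and since $M$ is a $p'$-group this means $P^e = 1$, so $P$ has bounded exponent; combined with a bound on the rank of $P$ (from bounded-codimension fixed spaces on a faithful coprime module) this yields $|P|$ bounded.
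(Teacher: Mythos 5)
Your setup is right (coprime action, $C_P(M)=1$, reduction to $M/\Phi(M)$), and you correctly identify Lemma~\ref{lemQ} as the engine, but the argument never extracts anything usable from it. Lemma~\ref{lemQ}, applied to a single $a\in M$, gives for each $g\in Q_a$ a bounded-index $D_{a,g}$ such that $\langle g^P\rangle$ centralizes $[a,D_{a,g}]$; there is no reason for $[a,D_{a,g}]$ to have bounded codimension in $M$. Your fallback is to let $a$ range over a generating set and intersect the $Q_a$, but the number of generators of $M$ (equivalently, the number of irreducible summands) is exactly what is not yet under control, so $|P:\bigcap_a Q_a|$ is unbounded. You flag this as the ``main obstacle'' and then route around it with the claim that a $p$-group acting faithfully and coprimely with bounded fixed-point codimension on $M$ must itself be bounded; even if that abstract statement is acceptable, nothing in your proposal actually produces the bounded-codimension hypothesis, so the argument does not close. (The one-prime-at-a-time reduction has the same flaw: bounding $|P/C_P(O_q(M))|$ for each $q$ separately does not bound $|P|$ when the number of primes $q$ is unbounded.)

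The paper closes precisely this gap with two devices your sketch omits. After decomposing $M=M_1\oplus\cdots\oplus M_s$ into irreducible $P$-modules by Maschke (keeping all primes together), it applies Lemma~\ref{lemQ} to the single \emph{diagonal} element $a=a_1+\cdots+a_s$ with $1\ne a_i\in M_i$, so one bounded-index subgroup $Q=Q_a$ simultaneously controls the commutators $[a_i,D]$ in every $M_i$. Then, setting $K_i=C_Q(M_i)$ and choosing $J$ minimal with $\bigcap_{j\in J}K_j=1$, Lemma~\ref{covers} yields one $g\in Q$ acting nontrivially on every $M_j$ with $j\in J$. Irreducibility now replaces your unjustified codimension estimate: $L=\langle g^P\rangle$ is normal in $P$ and nontrivial on $M_j$, so $C_{M_j}(L)=0$; as $L$ centralizes $[a_j,D]$, this forces $[a_j,D]=0$, so the normal subgroup $D$ centralizes $a_j\ne 0$ and hence all of $M_j$. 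Thus $D\le\bigcap_{j\in J}K_j=1$ and $|Q|\le|Q:D|$ is bounded. Without the diagonal choice of $a$ and the Lemma~\ref{covers} step there is no mechanism to make one bounded-index $D$ annihilate all the relevant summands at once, and your proposal supplies no substitute for it.
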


\begin{proof}
Note that $F(G)$ is a $p'$-group. We can pass to the quotient $F(G)/\Phi(F(G))$ over the Frattini subgroup and without loss of generality assume that $M$ is a direct sum of elementary abelian subgroups. By Maschke's theorem
\[ M = M_1 \oplus \dots \oplus M_s, \]
 where each $M_i$ is an irreducible $P$-module. Since $O_p(G)=1$, we have $C_P(M)=1$. Moreover $P$  acts faithfully on $[M,P]$ so we may assume that $M=[M,P]$ and therefore $C_M(P)=1$.
 
Choose elements $1\neq a_i\in M_i$ for $i=1,\dots,s$ and set $a=a_1+\ldots+a_s$. 

Let  $Q=Q_a$ be the normal subgroup of $P$ as in Lemma \ref{lemQ}. As $Q$ has bounded index in $P$, it is sufficient to show that the order of $Q$ is bounded. We can thus assume that $Q \neq 1$.  

Let $K_i=C_Q(M_i)$. Note that $\cap_{i=1}^s K_i=1 $. Let $J$ be a minimal subset of the set $\{1,\dots,s\}$ such that $\cap_{j \in J} K_j=1$. Observe that $Q$  acts faithfully on $\sum_{j\in J}M_j$. In view of Lemma  \ref{covers}, $\Q \neq \cup_{j \in J} K_j$. Therefore there is $g \in Q$ such that $g$ acts nontrivially on each $M_j$ for $j\in J$. 

Let $D=D_{a,g}$ be as in Lemma \ref{lemQ} and $L=\langle g^P\rangle$. According to Lemma \ref{lemQ}, the subgroup $L$ centralizes $[a,D] $ and, in particular, $L$ centralizes each of the subgroups $[a_i,D]\leq M_i$. 
 
Let $j \in J$. Since $L$ is normal in $P$, the centralizer $C_{M_j}(L)$ is a $P$-submodule of $M_j$. As each $M_j$ is irreducible, we have that either $L$ centralizes $M_j$ or $C_{M_j}(L)=1$.  As  $g$ acts nontrivially on  $M_j$, we deduce that  $C_{M_j}(L)=1$.  
It follows that $[a_j,D]=1$. So  $1 \neq a_j \in C_{M_j}(D)$ and $ C_{M_j}(D)$ is nontrivial.  Because $D$ is normal in $P$ and $M_j$ is irreducible, we conclude that  $D$ centralizes $M_j$. 

 Therefore $D \le \cap_{j \in J} K_j=1$. Since the index $|Q:D|$ is bounded, so is the order of  $\Q$. This completes the proof. 
\end{proof}

We say that a group is semisimple if it is a direct product of nonabelian simple groups.

\begin{proposition}\label{orbit} Suppose that $G=MP$, where $M$ is a normal semisimple subgroup of $G$. Then the size of every $P$-orbit under the natural permutational action of $P$ on the simple factors of $M$ is bounded.
\end{proposition}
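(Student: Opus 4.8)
The plan is to reduce to a single $P$-orbit on the simple factors and show that a large orbit forces a pair of conjugate Sylow-type $p$-subgroups with small centralizer intersection, contradicting Hypothesis~\ref{basic}. Concretely, let $\Omega$ be a $P$-orbit on the set of simple factors of $M$, say $\Omega = \{S_1,\dots,S_r\}$ with $S_1^{\,g_i} = S_i$ for suitable $g_i \in P$, and let $N = S_1 \times \cdots \times S_r$ be the product over $\Omega$; then $N$ is $P$-invariant and, replacing $G$ by $NP$ (which still satisfies Hypothesis~\ref{basic} by Lemma~\ref{trivial}), we may assume $P$ acts transitively on the $r$ simple factors of $M$. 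Let $B = P \cap S_1$ be the projection of the stabilizer $P_1 = N_P(S_1)$ into $S_1$, and pick a nontrivial element $s \in S_1$ together with $a \in S_1$ not centralizing $s$ (possible since $S_1$ is nonabelian simple). The key object is the ``diagonal'' element $a^* = a^{g_1}\cdots a^{g_r}$ obtained by spreading $a$ across all factors of $\Omega$, mirroring the use of $a = a_1 + \cdots + a_s$ in Proposition~\ref{soluble}.

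First I would apply Lemma~\ref{lemQ} to the element $a^* \in M \leq G$: there is a normal subgroup $Q = Q_{a^*}$ of $P$ of bounded index so that for every $g \in Q$ there is $D = D_{a^*,g}$ of bounded index in $Q$ with $\langle g^P\rangle$ centralizing $[a^*,D]$. If the orbit size $r$ is larger than $|P:Q|$ times a bounded constant, then the stabilizer $Q_1 = N_Q(S_1) = Q \cap N_P(S_1)$ still has $Q$-orbit of size $r/|P:Q|$ or so on the factors, and more importantly the image of $Q$ in the wreath product $\aut(S_1)\wr\Sym(r)$ still moves many factors. I would then choose $g \in Q$ projecting to a nontrivial element in the $S_1$-coordinate (using an argument parallel to the Lemma~\ref{covers} step in Proposition~\ref{soluble}: the subgroups $C_Q(S_i)$ intersect trivially, so $Q$ is not their union, giving $g \in Q$ acting nontrivially on each factor in a minimal covering subset). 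For such $g$, set $L = \langle g^P\rangle$; since $L$ is $P$-invariant and $P$ is transitive on the $r$ factors, $L$ projects nontrivially onto every factor, hence $C_{S_i}(L) \neq S_i$ for each $i$, and simplicity forces $C_M(L) = 1$ in the relevant factors. But $L$ centralizes $[a^*,D]$; running the Proposition~\ref{soluble} argument factor-by-factor then forces $D$ to centralize each $S_i$ in the minimal covering set, so $D \leq \cap C_Q(S_i)$ restricted there is small. The tension is between ``$D$ has bounded index in $Q$'' and ``$D$ centralizes a large number of simple factors'': since each simple factor contributes a nontrivial constraint, $D$ small-index forces the number of factors $D$ fails to centralize to be bounded, while transitivity of $P$ and normality of $D$ in $P$ force $D$ to behave uniformly across the orbit — so $D$ centralizes all of $N$ or none of it, and the only consistent possibility bounds $r$.

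The main obstacle, and the place the argument genuinely differs from Proposition~\ref{soluble}, is that $P$ does not act on the simple factors by linear automorphisms of a module but by a mix of permuting the factors and acting by automorphisms within each factor, so ``$D$ normal in $P$ and $M_j$ irreducible'' has no literal analogue. I expect the right substitute is: the set $\Delta = \{\, i : D \text{ does not centralize } S_i \,\}$ is $P$-invariant (because $D \trianglelefteq P$), hence a union of $P$-orbits, hence either empty or all of $\Omega$; combined with $|\Omega \setminus \Delta|$ being bounded (from $[a^*,D]$ having controlled centralizer behaviour and $|Q:D|$ bounded forcing $D$ to centralize all but boundedly many factors), this pins down $|\Omega|$ to be bounded. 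A secondary technical point is making sure the element $g$ can be chosen in $Q$ (not merely in $P$) projecting nontrivially to enough coordinates; this is exactly where Lemma~\ref{covers} is invoked, applied to the subgroups $C_Q(S_i)$ for $i$ ranging over a minimal subset with trivial intersection, after checking $\cap_i C_Q(S_i) = C_Q(M) \cap Q$, which is a $p$-group normal in $G$ and hence bounded (it lies in $O_p(G)$, and $O_p(NP)$ is a $p'$-by-bounded situation — here one uses that $M$ semisimple has trivial $O_p$, so $C_Q(M)$ injects into $\out(M)$ and is therefore itself bounded or can be quotiented out). Finally I would assemble these bounds to conclude that every $P$-orbit has bounded size.
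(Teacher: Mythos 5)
Your plan has a genuine gap and misses the mechanism that makes the paper's proof work. The paper's proof is much lighter than what you propose: it fixes a single nontrivial $x\in S_1$, applies Lemma~\ref{lemQ} with $a=x$ to get normal subgroups $Q$ and (for any $g\in Q\setminus N_P(S_1)$) $D$ of bounded index with $g$ centralizing $[x,D]$, and then uses the \emph{support} of $[x,d]$ for $d\in D\setminus N_P(S_1)$. The element $[x,d]=x^{-1}x^d$ has nontrivial projection to exactly $S_1$ and $S_1^d$; since $g$ permutes the simple factors and fixes $[x,d]$, the $g$-image of its support $\{S_1,S_1^d\}$ must equal $\{S_1,S_1^d\}$. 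Because $g\notin N_P(S_1)$, this forces $\{S_1,S_1^d\}$ to be a $g$-orbit. Running this over all $d\in D\setminus N_P(S_1)$ shows every such $d$ sends $S_1$ into the same $g$-orbit, so the $D$-orbit of $S_1$ has size at most $2$ and $|P:N_P(S_1)|\le 2|P:D|$ is bounded. Nothing about $C_Q(M)$, diagonal elements, or Lemma~\ref{covers} is needed.

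Two concrete problems with your version. First, you assert that simplicity of $S_i$ forces $C_{S_i}(L)$ to be trivial when $L$ acts nontrivially. That step is borrowed from Proposition~\ref{soluble}, where $C_{M_j}(L)$ is a $P$-submodule of an \emph{irreducible module}; here $C_{S_i}(L)$ is merely a subgroup of $S_i$, not a normal one, and simplicity gives no control (e.g.\ the fixed points of a field automorphism of a simple group of Lie type form a large subgroup). Second, the ``tension'' you invoke --- that $D$ having bounded index in $Q$ forces $D$ to centralize all but boundedly many simple factors --- is not justified and is not true in general: a bounded-index subgroup of a $p$-group acting on many factors can easily fail to centralize most of them. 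The $P$-invariance of $\Delta=\{i: [S_i,D]\neq 1\}$ is a correct observation, but by itself it only says $\Delta$ is empty or all of $\Omega$, and the case $\Delta=\Omega$ (which is the interesting one) gives no bound on $|\Omega|$ without a further input; that further input in the paper is precisely the support-of-$[x,d]$ argument, which is absent from your proposal. Note also that Proposition~\ref{orbit} carries no hypothesis on $O_p(G)$, so your side remark that $C_Q(M)$ is bounded because it lies in $O_p(G)$ is unavailable; you can quotient by $C_G(M)$ since the permutation action factors through, but this has to be set up explicitly and still does not repair the two gaps above.
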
 
\begin{proof} Without loss of generality, we can assume that   $M= S_1 \times \cdots \times S_l$, where each $S_i$ is a finite nonabelian simple group and $\{S_1, \ldots, S_l\}$ is a $P$-orbit. Let $P^*$ be the stabilizer of $S_1$. We need to show that the index of $P^*$ in $P$ is bounded. For an element $x\in S_1$  set $P_1=P$ and $P_2=P^x$. Let  $Q=Q_x$ be the normal subgroup of $P$ as in Lemma \ref{lemQ}. Suppose $Q$ is not contained in $P^*$ and choose $g\in Q$ such that $g\not\in P^*$. Then, by Lemma \ref{lemQ}, $P$ contains a subgroup $D$ of bounded index such that $g$ centralizes $[x,D]$. If $D\leq P^*$, we are done. Otherwise, pick $d\in D$ such that $d\not\in P^*$. For simplicity assume that $S_2=S_1^d$. We see that $[x,d]$ is a nontrivial element of $S_1S_2$. So $g$ has nontrivial centralizer in $S_1S_2$ and therefore $\{S_1,S_2\}$ is a $g$-orbit. This argument can be applied for every element of $D$. Since $S_1$ belongs to only one $g$-orbit, it follows that $S_1^y\in \{S_1,S_2\}$ for every element $y\in D$, that is $\{S_1,S_2\}$ is a $D$-orbit. Hence, the index of $D\cap P^*$ in $D$ is at most $2$ and therefore the index of $P^*$ in $P$ is bounded.
\end{proof}

\begin{proposition}\label{trickB} Suppose that $G=MP$, where $M$ is a normal semisimple subgroup of $G$, and assume that $O_p(G)=1$. 
 Let $C$ be a constant such that 
 $|P:C_P(S)|\leq C$ for every simple factor $S$ of $M$. Then  the order of $P$ is $(C,\epsilon)$-bounded.
\end{proposition}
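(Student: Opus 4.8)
The plan is to combine Proposition~\ref{orbit} with the earlier bound on $|P:C_P(S)|$ to pin down the action of $P$ on $M$, and then push everything down to a situation where Proposition~\ref{soluble} (or a direct centralizer argument) applies. First I would write $M=S_1\times\cdots\times S_l$ with the $S_i$ nonabelian simple, and let $P$ permute the factors. By Proposition~\ref{orbit}, every $P$-orbit on $\{S_1,\dots,S_l\}$ has bounded size; since $|P:C_P(S_i)|\le C$, the kernel of the action of $P$ on each orbit has index at most $C^{(\text{bounded})}$, so the kernel $N$ of the permutation action of $P$ on the whole set of factors has bounded index in $P$ provided the number of orbits is controlled — but it need not be. The cleaner route: let $N=\bigcap_i C_P(S_i)$; this is exactly the kernel of the action of $P$ on $M$, and since $O_p(G)=1$ forces $C_P(M)=1$, in fact $N=1$. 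So it suffices to bound $|P|$ knowing $|P:C_P(S_i)|\le C$ for each $i$ and $\bigcap_i C_P(S_i)=1$.

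Next I would invoke Lemma~\ref{covers} in the style of the proof of Proposition~\ref{soluble}: choose a minimal subset $J\subseteq\{1,\dots,l\}$ with $\bigcap_{j\in J}C_P(S_j)=1$. Each $C_P(S_j)$ has index at most $C$ in $P$, so $|P:\bigcap_{j\in J}C_P(S_j)|\le C^{|J|}$, i.e. $|P|\le C^{|J|}$; it remains to bound $|J|$. Here is where Hypothesis~\ref{basic} re-enters. By minimality, for each $j\in J$ the subgroup $X_j=\bigcap_{i\in J\setminus\{j\}}C_P(S_i)$ is nontrivial, and the $X_j$ pairwise generate a large "independent" configuration: $X_j$ acts nontrivially on $S_j$ but trivially on $S_i$ for $i\in J\setminus\{j\}$. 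Picking $1\ne a_j\in S_j$ for $j\in J$ and setting $a=\prod_{j\in J}a_j\in M$, I would apply Lemma~\ref{lemQ} to the element $a$: there is a bounded-index normal $Q=Q_a\le P$ so that for $g\in Q$ there is bounded-index $D=D_{a,g}\le Q$ with $\langle g^P\rangle$ centralizing $[a,D]=\prod_j[a_j,D\cap C_P(S_j^?)]$ — one has to be careful that $D$ need not stabilize the factors, but since $D$ has bounded index and each $P$-orbit is bounded, a bounded-index subgroup of $D$ does stabilize each $S_j$ with $j\in J$, and on that subgroup $[a,\cdot]$ splits as a product of the $[a_j,\cdot]$ in distinct simple factors. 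Using that a nontrivial element of $P/C_P(S_j)$ cannot centralize all of $S_j$ (true because the relevant group is almost simple, so $C_{S_j}(x)\ne S_j$ for $x\notin C_P(S_j)$) together with the independence of the $X_j$, one derives — exactly as in Proposition~\ref{soluble} — that $D\cap C_P(S_j)$ must already contain the "$a_j$-direction", forcing $D\le\bigcap_{j\in J}C_P(S_j)=1$; hence $|Q|$ is bounded, hence $|P|$ is bounded.

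I expect the main obstacle to be the bookkeeping around the permutation action: in Proposition~\ref{soluble} the $M_i$ are genuine $P$-submodules, whereas here $P$ may shuffle the $S_j$, so "$[a,D]$" is not automatically a product over the factors indexed by $J$. The fix is to first replace $P$ by a bounded-index subgroup that stabilizes every $S_j$ for $j\in J$ (legitimate since $|J|$ is a priori unbounded but each orbit has bounded size — one restricts attention to the sub-configuration, or alternatively one argues that $Q$ itself may be shrunk by a bounded index to stabilize the relevant factors, using that $C_P(S_j)$ already has index $\le C$), and only then run the Lemma~\ref{covers} / Lemma~\ref{lemQ} argument. The second subtlety is verifying $C_{S_j}(x)\subsetneq S_j$ whenever $x$ acts nontrivially on $S_j$; this is immediate because the action on $S_j$ extends to an embedding into $\aut(S_j)$ and a nontrivial automorphism of a nonabelian simple group has no nonidentity fixed-point-free... rather, simply cannot fix $S_j$ pointwise by definition of acting nontrivially, which is all that is needed. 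With these two points handled, the argument is a faithful adaptation of the proof of Proposition~\ref{soluble}, with "irreducible $P$-module $M_i$" replaced by "simple factor $S_j$ on which $C_P(S_j)$ has index $\le C$".
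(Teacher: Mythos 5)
Your overall skeleton — use Proposition~\ref{orbit} to control orbits, invoke Lemma~\ref{covers} with the centralizers $C_P(S_j)$, feed a well-chosen $a=\prod a_j$ into Lemma~\ref{lemQ}, and derive that $D$ must centralize everything — is the right shape and does match the paper's strategy. But the proposal has a genuine gap exactly at the step you gloss over with ``exactly as in Proposition~\ref{soluble}.'' The proof of Proposition~\ref{soluble} uses irreducibility of the $P$-module $M_j$ twice: once to pass from ``$L$ acts nontrivially'' to $C_{M_j}(L)=1$ (hence $[a_j,D]=1$), and then again to pass from ``$1\neq a_j\in C_{M_j}(D)$, $D$ normal in $P$'' to $C_{M_j}(D)=M_j$. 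Neither step survives when $M_j$ is replaced by a nonabelian simple group $S_j$: there is no dichotomy forcing centralizers in $S_j$ to be trivial or everything, and a single nonidentity $a_j$ being centralized by $D$ certainly does not force $D$ to centralize $S_j$ (take $a_j$ generating a small cyclic subgroup and $D$ acting by an inner automorphism fixing it). Your appeal to ``a nontrivial element of $P/C_P(S_j)$ cannot centralize all of $S_j$'' is true but useless here — it is the converse you would need, and that converse is false.

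The paper repairs this with two moves you did not make. First, it decomposes $M$ into minimal normal subgroups $M_i$ of $G$ (automatically $P$-invariant, so the factor-shuffling bookkeeping you worry about never arises), and then, after noting that each $M_i$ is a bounded product of simple factors by Proposition~\ref{orbit}, it reduces to bounding the order of a specially built subgroup $\D=\langle y_1,\dots,y_l\rangle$, where $y_i\in\bigcap_{j\neq i}C_P(M_j)$ has prime order; the whole of $\D$ then acts on $M_i$ through the cyclic group $\langle\phi_i\rangle$ of order $p$. Second, it chooses $a_i\in M_i$ with $[a_i,\phi_i,\phi_i]\neq 1$ (possible because $y_i$ cannot be a left Engel element, else $y_i\in F(G)$, contradicting $O_p(G)=1$). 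With the action on each $M_i$ reduced to a cyclic group of prime order and the $a_i$ chosen non-Engel, the Lemma~\ref{lemQ} conclusion $[a_i,\phi_i,g]=1$ really does force $[a_i,\phi_i,\phi_i]=1$ (since $g$ induces a generator of $\langle\phi_i\rangle$), giving the contradiction; and $[a_i,D]=1$ combined with the cyclic-of-prime-order action then genuinely forces $D$ to act trivially on $M_i$. Your proposal, by picking arbitrary nonidentity $a_j$ and staying at the level of a general $P$, has no substitute for either move, so the final step ``forcing $D\le\bigcap_j C_P(S_j)=1$'' does not follow. Your secondary worry about restricting to a bounded-index subgroup of $P$ stabilizing all the $S_j$ with $j\in J$ is also not resolvable as stated (the number of such $j$ is precisely what you have not yet bounded), and the paper's use of minimal normal subgroups of $G$ is what makes that concern evaporate.
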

\begin{proof} 
Note that $C_P(M)=1$.
Write $M= M_1\times \cdots\times M_l$, where each $M_i$ is a minimal normal subgroup of $G$. Let $J$ be a minimal subset of the set $\{1,\dots,l\}$ such that $P$  acts faithfully on $\prod_{j\in J}M_j$. Replacing $M$ by $\prod_{j\in J}M_j$ we simply assume that $J=\{1,\dots,l\}$.

It follows from  Proposition \ref{orbit} that every $M_i$ is a product of boundedly many simple factors so it is sufficient  to bound $l$.

For every $i$ set $N_i=\prod_{j\in J\setminus\{i\}}M_j$ and $Q_i=C_P(N_i)$. Choose an element $y_i$ of prime order in $Q_i$ and set $\D=\langle y_1,\dots, y_l\rangle$. It is enough to show that $\D$ has bounded order. 

For each $i$, the element $y_i$ induces on $M_i$  an automorphism $\phi_i$ of order $p$. Observe that the group of automorphisms induced by $\D$ on $M_i$ is $\langle\phi_i\rangle$. Pick an element $a_i \in M_i$ such that $[a_i, \phi_i,\phi_i] \neq 1$. Such an element $a_i$ exists for every $i$, because 
 otherwise $y_i$ would be a left Engel element of $G$ lying in the Fitting subgroup of $G$ (see  \cite[12.3.7]{robinson}),  
which immediately leads to a contradiction. 

Let $a= \prod_i a_i$ and let  $Q=Q_a$ be the normal subgroup of $\D$ as in Lemma \ref{lemQ}. As $Q$ has bounded index in $\D$,  it is sufficient to bound the order of $Q$.
 
Let $I_0$ be a minimal subset of $\{1,\dots,l\}$ such that $\cap_{i\in I_0} C_Q(M_i)=1$. It follows from Lemma \ref{covers} that there exists an element $g\in \Q$ such that $g \notin C_{\Q}(M_i)$ for every $i\in I_0$. 
  
  By Lemma \ref{lemQ}, there exists a subgroup $D$ of bounded index in $Q$ such that $g$ centralizes  the subgroup $[a,D]$. 
  So $g$ centralizes  also the projections $[a_i ,D]$ of  $[a,D]$ on $M_i$. 
  The group of automorphisms induced by $Q$ on $M_i$ is $\langle \phi_i \rangle$, which has prime order. 
  So if $[a_i,D] \neq 1,$ then $[a_i,D]$ contains the element $[a_i,\phi_i]$. Moreover, $[a_i,\phi_i, g] \in [a_i,D, g]=1$. 
  
On the other hand, the automorphism induced by $g$ on $M_i$ is a nontrivial power of $\phi_i$. Therefore the fact that $[a_i, \phi_i, g]=1$ implies that $[a_i,\phi_i, \phi_i]=1$, which contradicts the choice of $a_i$. 
  
This means that $[a_i,D] =1$ for every $i$. If the automorphism $\phi_i^{r_i}$ induced by $D$ on $M_i$ is nontrivial, then $\phi_i^{r_i}$ centralizes $a_i$ and so does $\phi_i$, a contradiction again. Therefore $D$ centralizes all $M_i$ for $i\in I_0$ and $Q$ has bounded order. The lemma follows.
\end{proof}

\section{The case where $G$ is almost simple}\label{simple} 

We start this section by giving an example showing that under Hypotheses \ref{basic} the order of $P$ can be arbitrarily large even if $O_p(G)=1$.

\begin{example}\label{example}{\rm Let $G= {\mathrm{SL}}(n,p) = \mathrm{SL}(V)$ for  $n\geq3$, and let $A$ be a hyperplane in $V$. 

Let $P$ be the elementary abelian subgroup of order $p^{n-1}$ which is trivial on both $A $ and $V/A$, and choose $x\in G\setminus N_G(P)$.
 
Note that $P$ and $P^x$ intersect trivially and the subgroup $\langle P,P^x\rangle$  has an abelian subgroup $Q$ of order $p^{2n-2}$ such that $Q\cap P$ and $Q\cap P^x$  each have index $p$ in $P$ and $P^x$ respectively.  We see that $\pr(P,P^x)\geq1/p^2$. Since $n$ can be arbitrarily large, it follows that there is no bound on the order of $P$.} \qed
\end{example}

The main goal of this section is to prove the following theorem.

\begin{theorem}\label{almostsimple} Assume Hypothesis \ref{basic} with $G$ almost simple. Then the order of $P$ is bounded unless the socle $S$ is of Lie type in characteristic $p$. If $S$  is of Lie type in characteristic $p$ of rank $n$, then the order of $P$ is $(n,\ep)$-bounded.
\end{theorem}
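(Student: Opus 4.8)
The plan is to proceed by cases according to the isomorphism type of the socle $S$ of the almost simple group $G$, using the classification of finite simple groups together with the structural tools already developed, principally Lemma~\ref{lemH_0}, Lemma~\ref{lemQ}, and Lemma~\ref{exp-bounded}. The key quantitative input is that, by Lemma~\ref{lemH_0}, there is an $\ep$-bounded integer $t$ such that for every $g\in G$ the subgroup $P$ has a normal subgroup $P_g$ of index at most $t$ with $|P^g:C_{P^g}(x)|\le t$ for all $x\in P_g$; in particular, after replacing $P$ by a bounded-index subgroup we may assume $P$ is elementary abelian (via Neumann's theorem applied to $P$, or simply by passing to $P/\Phi(P)$ when convenient). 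Since $|G:S|$ is bounded once $|\aut(S)/S|$ is controlled — which it is for sporadic and alternating socles, and is $(n,\ep)$-bounded for Lie type socles once the rank is bounded — it suffices to bound $|P\cap S|$ under the additional hypothesis that $P\le S$.

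First I would dispose of the ``generic'' cases. If $S$ is sporadic, then $|P|$ is absolutely bounded and there is nothing to prove. If $S=\Alt(m)$, I would use the fact that a $p$-subgroup of $\Sym(m)$ with the property that every pair of conjugates has commuting probability $\ge\ep$ cannot move too many points: choosing $x$ to be a suitable permutation, one forces, via Lemma~\ref{lemH_0}, that most elements of $P$ have small support after conjugation, and a counting/support argument (of the type in Proposition~\ref{orbit}, applied to the action on blocks of imprimitivity of $\langle P\rangle$) bounds $m$, hence $|P|$. If $S$ is of Lie type in characteristic $r\ne p$, then by Lemma~\ref{p-bounded} the prime $p$ is bounded, and a Sylow $p$-subgroup of such a group has order bounded in terms of $p$ and the rank unless the field is large; one then invokes results on centralizers of semisimple elements (the tori are large but abelian) — here I would choose $x$ so that $P$ and $P^x$ generate a subgroup in which $P$ meets a torus, forcing $P$ into a bounded-size subgroup, or more efficiently cite that cross-characteristic $p$-subgroups of bounded $p$ have order bounded in terms of the Lie rank. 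In all these cases the bound is absolute or $(n,\ep)$-bounded as required, and the interesting case $S$ of Lie type in characteristic $p$ is excluded from the first assertion.

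The substantive case is $S$ of Lie type in characteristic $p$ of rank $n$; here Example~\ref{example} shows a bound genuinely depending on $n$ is necessary, and the goal is only an $(n,\ep)$-bound. I would argue as follows: every $p$-subgroup of $S$ is contained in a Borel subgroup $B=UT$, where $U=O_p(B)$ is the unipotent radical; a chief-series analysis of $U$ as a $T$-module shows $U$ has an $n$-bounded number of ``levels'', and within the bottom level the root subgroups give rise to many noncommuting conjugates unless $|P|$ is controlled. Concretely, fix a long root subgroup $U_\alpha\cong(\F_q,+)$ and an element $x$ of the Weyl group (lifted to $G$) with $U_\alpha^x=U_{-\alpha}$; then $\langle U_\alpha,U_{-\alpha}\rangle\cong\mathrm{SL}_2(q)$ or $\PSL_2(q)$, and $\pr(U_\alpha,U_\alpha^x)$ is easily computed to be roughly $1/q$, so the hypothesis $\pr(P,P^x)\ge\ep$ applied with $P$ replaced by $P\cap U_\alpha$ (Lemma~\ref{lem:2.2}(2)) forces $|P\cap U_\alpha|\le$ something $\ep$-bounded. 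Summing over an $n$-bounded set of root subgroups that generate $U$ modulo a bounded-level filtration, together with Lemma~\ref{exp-bounded} to handle the torus part $P\cap T$ (elements of a torus are almost-fixed and so contribute boundedly), yields $|P|\le (n,\ep)$-bounded. The main obstacle I anticipate is making the root-subgroup reduction uniform across all Lie types, twisted groups, and small fields, and in particular dealing with the cases $\mathrm{SL}_2(q)$, Suzuki and Ree groups where ``rank'' is small but the unipotent subgroup can still be large — there one uses directly that $P$ lies in a cyclic-by-(bounded) or metabelian unipotent group and that conjugation by a Weyl-type element produces the commuting-probability estimate $\pr(P,P^x)\asymp |P|^{-1}$, giving the bound. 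A secondary technical point is passing from ``$P\le S$'' back to general $P\le G$, which is routine since $|G:S|$ divides $|\out(S)|$ and field/diagonal/graph automorphisms contribute an $(n,p)$-bounded — hence, by Lemma~\ref{p-bounded}, $(n,\ep)$-bounded — factor to $|P|$.
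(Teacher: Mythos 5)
Your overall case division by socle type matches the paper's (sporadic; alternating; Lie type split by whether the defining characteristic equals $p$), and the reduction to $P$ elementary abelian (via replacing $P$ by an abelian subgroup, then using bounded exponent) is also the paper's starting step. But there is a genuine gap in the cross-characteristic Lie type case, and it is precisely the distinction the theorem is drawing.

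When $S$ is of Lie type in characteristic $r\neq p$, the theorem asserts that $|P|$ is bounded in terms of $\ep$ alone, with \emph{no} dependence on the Lie rank. Your proposal for this case ends with ``cite that cross-characteristic $p$-subgroups of bounded $p$ have order bounded in terms of the Lie rank,'' which only yields an $(n,\ep)$-bound. That is strictly weaker than what is claimed, and it collapses the dichotomy the theorem is about: if the cross-characteristic case is only $(n,\ep)$-bounded, there is no distinction between the two Lie type cases, and the first sentence of the theorem is false as you would prove it. The cross-characteristic case is in fact the substantive part of the proof. The paper first bounds the field size $q$ using the average-commuting-probability estimate $e_G(H)<2/|H|+f(q)$ of Lemma \ref{f} combined with Lemma \ref{lie}, which rests on the Liebeck--Saxl theorem on fixed-point ratios in primitive actions; with $q$ and $p$ both bounded, the exceptional types are outright finite in bounded order, and for classical groups (Lemma \ref{difchara}) it then runs a module-theoretic argument on the natural module: it decomposes into ``good'' nondegenerate $P$-submodules of bounded dimension and shows that at most boundedly many can occur in a faithful orthogonal decomposition, using the same $(3/4)^l\ge\ep$ trick you invoke for defining characteristic. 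Without a mechanism to first cap $q$ and then bound the number of pieces independently of the rank, the cross-characteristic case is not closed by your proposal.

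Two secondary points. First, you never introduce the paper's central device, the inequality of Lemma \ref{f}, which simultaneously drives the symmetric/alternating case (through Lemma \ref{regular}, an asymptotic estimate on involution counts and centralizer sizes for fixed-point-free elements of order $p$) and the large-field Lie type case; your alternating-group argument by ``support and blocks of imprimitivity'' is plausibly fixable but much vaguer than the paper's concrete estimate. Second, your Borel/root-subgroup machinery for defining characteristic $p$ is more work than is needed: once $q>q_0$ Lemma \ref{qbounded} already gives $|P|\le 4/\ep$ directly, and once $q\le q_0$ a rank-$\le n$ group over $\mathbb{F}_q$ has $(n,\ep)$-bounded order, so $|P|$ is trivially bounded. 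The root-subgroup sketch would also have to be made uniform across twisted and very twisted types, which is extra overhead the paper's route avoids entirely.
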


Given a finite group $G$ and a subgroup $H$, let $$e_G(H)=|G|^{-1} \sum_{g \in G}\pr(H,H^g)$$ be the average value of $\pr(H,H^g),\  g\in G$. Observe that the probability that an element $x$ commutes with a random conjugate of an element $y$ is $\frac{|y^G\cap C_G(x)|}{|y^G|}$. We write $H^\#$ to denote the set of nontrivial elements of $H$.

\begin{lemma}\label{f} 
Let $f \le 1$ and suppose that $H$ is a subgroup of a finite group $G$ such that for every pair of nontrivial elements $x,y\in H$ the probability that $x$ and a random conjugate of $y$ commute is at most $f$. Then   \[  e_G(H)  < 2/|H| + f. \]
\end{lemma}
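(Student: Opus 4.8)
The plan is to estimate $e_G(H)$ directly from its definition by separating the contribution of pairs $(x,y) \in H \times H$ according to whether or not both coordinates are nontrivial. Recall that
\[ e_G(H) = \frac{1}{|G|}\sum_{g\in G}\pr(H,H^g) = \frac{1}{|G|\,|H|^2}\sum_{g\in G}\#\{(x,y)\in H\times H : x\in C_G(y^g)\}, \]
and after swapping the order of summation this becomes $\frac{1}{|H|^2}\sum_{(x,y)\in H\times H} q(x,y)$, where $q(x,y) = \frac{|y^G \cap C_G(x)|}{|y^G|}$ is exactly the probability that $x$ commutes with a random conjugate of $y$, as observed in the paragraph preceding the lemma.

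First I would handle the "degenerate" pairs: if $x=1$ or $y=1$, then $q(x,y)=1$, and the number of such pairs is $2|H|-1$. This contributes $\frac{2|H|-1}{|H|^2} < \frac{2}{|H|}$ to $e_G(H)$. For the remaining pairs, both $x,y \in H^\#$, and by hypothesis $q(x,y)\le f$; there are $(|H|-1)^2 < |H|^2$ such pairs, contributing at most $\frac{(|H|-1)^2}{|H|^2}f < f$. Adding the two bounds gives $e_G(H) < 2/|H| + f$, which is the claimed inequality.

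There is essentially no obstacle here — the statement is a routine averaging argument. The only points that require a moment's care are the bookkeeping of which pairs are "degenerate" (one must count $(1,y)$ for $y\in H$ and $(x,1)$ for $x\in H$, with the pair $(1,1)$ counted once, giving $2|H|-1$) and the observation that $q(x,y)=1$ automatically when either argument is trivial, since the identity commutes with everything and any conjugate of the identity is the identity. I would also remark explicitly that the bound $q(x,y)\le f$ for $x,y\in H^\#$ is precisely the stated hypothesis rephrased via the formula $q(x,y) = |y^G\cap C_G(x)|/|y^G|$, so that no further work is needed there.
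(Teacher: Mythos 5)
Your proof is correct and takes essentially the same route as the paper: both expand $e_G(H)$ as an average over pairs $(x,y)\in H\times H$ of the probability $q(x,y)$, separate the $2|H|-1$ pairs with a trivial coordinate, and bound the remaining $(|H|-1)^2$ pairs by $f$. Your write-up is a bit more streamlined since you reindex the whole sum to $\frac{1}{|H|^2}\sum_{(x,y)}q(x,y)$ in one step, but the decomposition and the bound are identical to the paper's.
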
  
\begin{proof}  Let $n=|H|$, thus 
\begin{eqnarray*}
n^2 |G| e_G(H) &=& n^2  \sum_{g \in G} \pr(H,H^g)=\sum_{g \in G}  \sum_{x \in H} |C_{H^g}(x)|\\
&=& \sum_{g \in G}  \left( n + (n-1) +  \sum_{x \in H^\#} |C_{H^{g\#}}(x) |\right)\\
&=&(2n-1) |G| +\sum_{x \in H^\#} \left(\sum_{g \in G}  |C_{H^{g\#}}(x) |\right).
\end{eqnarray*}

Now 
\begin{eqnarray*}
 \sum_{g \in G}  |C_{H^{g\#}}(x) | &=& \sum_{g \in G} |\{ y \in H^\# \mid [x, y^g] =1\} |\\
&=& |\{ (g, y) \in G \times H^\# \mid [x, y^g]=1\}|\\
&=&\sum_{y \in H^\#} | \{ g \in G \mid [x, y^g] =1\}|.
\end{eqnarray*}

Note that $|\{w\in G \mid y^w=y^g\}|=|C_G(y)|$.
Now fix $1 \ne x, y  \in H$. We have
\begin{eqnarray*}
|\{(x,g) \mid g\in G, [x,y^g]=1\}| &=&|\{(x,y^g) \mid   [x,y^g]=1 \}|\,|C_G(y)| \\ 
&=& |y^G\cap C_G(x)|\,|C_G(y)|\\
&=&\frac{|y^G\cap C_G(x)|}{|y^G|}\,|y^G||C_G(y)| \le f |G|.
\end{eqnarray*}

Thus, the total number of commuting pairs  $(x,z^g)$ as $z$ ranges over $H^\#$ and $g$ ranges over $G$
is at most  $(n-1)f|G|,$  as there are $n-1$ choices for $z$: 
\[  | \{ (g, y) \in G \times H^\# \mid [x, y^g]=1\}| \le (n-1)f|G|. \]

Thus, 
\[ \sum_{g \in G}  \left( \sum_{x \in H^\#} |C_{H^{g\#}}(x) | \right)\le  (n-1)^2 f|G|. \]

Putting all together we have 
 \[ n^2 |G| e_G(H) \le  (2n-1) |G| + (n-1)^2 f|G| , \]
which gives 
 \[  e_G(H) \le \frac{2n-1}{n^2} + \frac{(n-1)^2 f}{n^2} < 2/n + f, \]
as claimed. 
  \end{proof}  

\begin{lemma} \label{regular} Let $n=p^a \ge 5$ and let $G=S_n$ be the symmetric group of degree $n$. Let $P$ be a regular elementary abelian $p$-subgroup of $G$. Then $e_G(P) \rightarrow 0$ as $a \rightarrow \infty$.
\end{lemma}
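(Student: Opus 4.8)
The plan is to apply Lemma~\ref{f} with $H=P$ inside $G=S_n$, taking for $f$ the exact probability that a fixed nontrivial element of $P$ commutes with a random $G$-conjugate of a nontrivial element of $P$, and then showing this quantity tends to $0$. First I would record the structure of $P$: since $P$ acts regularly on the $n=p^a$ points, no nontrivial element of $P$ fixes a point, so every $x\in P^{\#}$ is a product of $m:=n/p=p^{a-1}$ disjoint $p$-cycles. In particular all elements of $P^{\#}$ are conjugate in $G$; write $\mathcal C$ for their common conjugacy class. The centralizer in $S_n$ of such an element is isomorphic to $C_p\wr S_m$, of order $p^m m!$, whence $|\mathcal C|=n!/(p^m m!)$. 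For any $x,y\in P^{\#}$, the probability that $x$ commutes with a random $G$-conjugate of $y$ equals $|\mathcal C\cap C_G(x)|/|\mathcal C|$, which depends only on the classes of $x$ and $y$ and hence is a single number $f=f(a)\le 1$. Lemma~\ref{f} then gives $e_G(P)<2/|P|+f=2p^{-a}+f$, so it suffices to prove $f(a)\to 0$ as $a\to\infty$.

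Next I would count $N:=|\mathcal C\cap C_G(x)|$, the number of elements of $C_p\wr S_m$ whose cycle type on the $pm$ underlying points consists of exactly $m$ disjoint $p$-cycles. The key input is the classical description of the cycle structure of an element $(t_1,\dots,t_m;\sigma)\in C_p\wr S_m$: each $d$-cycle of $\sigma$ contributes $p$ cycles of length $d$ when the product of the corresponding coordinates $t_i\in C_p$ around it is trivial, and one cycle of length $pd$ otherwise. Hence the cycle type is $m$ $p$-cycles precisely when every cycle of $\sigma$ has length $1$ or $p$; each fixed point $i$ of $\sigma$ requires $t_i\ne 1$ (thus $p-1$ choices); and each $p$-cycle of $\sigma$ requires the product of its $p$ coordinates to be trivial (thus $p^{p-1}$ choices). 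Summing over permutations $\sigma$ of cycle type $1^f p^k$ with $f+pk=m$, of which there are $m!/(f!\,k!\,p^k)$, yields
\[
N=m!\sum_{\substack{f,k\ge 0\\ f+pk=m}}\frac{(p-1)^f}{f!}\cdot\frac{(p^{\,p-2})^k}{k!}\ \le\ m!\Bigl(\sum_{f\ge0}\frac{(p-1)^f}{f!}\Bigr)\Bigl(\sum_{k\ge0}\frac{(p^{\,p-2})^k}{k!}\Bigr)=c_p\,m!,
\]
where $c_p=e^{\,p-1+p^{\,p-2}}$ depends only on $p$.

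Combining the two estimates,
\[
f(a)=\frac{N}{|\mathcal C|}\ \le\ c_p\,\frac{m!\cdot p^m m!}{(pm)!}\ =\ c_p\,\frac{p^m (m!)^2}{(pm)!},
\]
and it remains to observe that $p^m(m!)^2/(pm)!\to 0$ as $m\to\infty$. For $p\ge 3$ this follows from $\binom{pm}{m}\ge p^m$ together with $((p-1)m)!/m!\ge m^{(p-2)m}$, which bounds the ratio by $m^{-(p-2)m}$; for $p=2$ it follows from $\binom{2m}{m}\ge 4^m/(2m+1)$, which bounds the ratio by $(2m+1)2^{-m}$. Since $m=p^{a-1}\to\infty$ with $a$, we get $f(a)\to 0$, and therefore $e_G(P)\to 0$.

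I expect the only genuinely delicate point to be the cycle count in the second step: the trivial bound $|\mathcal C\cap C_G(x)|\le|C_G(x)|=p^m m!$ is too weak — for $p=2$ it would only give $f(a)\le 4^m(m!)^2/(2m)!$, which in fact tends to infinity — so one must exploit the constraint that the product of the base-group coordinates around each $p$-cycle of $\sigma$ is trivial, which is exactly what cancels the offending factor $p^m$.
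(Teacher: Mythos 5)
Your proof is correct, and while it shares the first step with the paper --- both reduce via Lemma \ref{f} to bounding the probability $f$ that a nontrivial element of $P$ commutes with a random conjugate of another --- the estimate of $f$ proceeds by a genuinely different route. The paper argues by cases on the parity of $p$: for $p$ odd, the crude bound $|x^G\cap C_G(x)|\le|C_G(x)|$ already forces $f\to 0$; for $p=2$ that bound fails (exactly as you observe at the end of your write-up), so the paper instead bounds $|x^G\cap C_G(x)|$ by $2^{n/2}|I_2(S_{n/2})|$ and invokes the asymptotic formula for the number of involutions in a symmetric group, citing Wilf. Your route is uniform in $p$: by exactly counting the elements of $C_G(x)\cong C_p\wr S_m$ with cycle type $p^m$ via the wreath-product coordinates, you reduce to base permutations $\sigma$ of type $1^f p^k$ with the stated coordinate constraints (each fixed point of $\sigma$ giving $p-1$ choices, each $p$-cycle of $\sigma$ giving $p^{p-1}$ choices), obtaining the bound $c_p\,m!$ with $c_p=e^{\,p-1+p^{\,p-2}}$. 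This cancels the problematic factor $p^m$ from $|C_G(x)|$ and yields the single estimate $f(a)\le c_p\,p^m(m!)^2/(pm)!$ valid for all $p$; convergence to $0$ then follows from elementary binomial-coefficient inequalities. The gain of your approach is a clean, case-free argument that also dispenses with the external citation to involution asymptotics; the paper's approach is slightly shorter in the odd case, where no exact count is needed.
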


\begin{proof} If $1 \ne x \in P$, then $x$ is a fixed-point-free permutation of order $p$. Observe that $P^\#\subset x^G$.   
Let $f$ be the probability that $x$ and a random conjugate  of $x$ commute. Thus, $f=|x^G \cap C_G(x)|/|x^G|$.  
Note that $C_G(x)$ has 
 a normal elementary abelian subgroup of order $p^{p^{a-1}}$ with the quotient isomorphic to $S_{n/p}$.   

If $p$ is odd, observe that
\[
\lim_{a \rightarrow \infty}  |C_G(x)|/|x^G| =\lim_{a \rightarrow \infty}  |C_G(x)|^2 /|G| = 0.
\]
Indeed, we have $|C_G(x)|^2 /|G|=|S_{n/p}|^2p^{2p^{a-1}}/(p^a)!$ and
this tends to zero when $a$ goes to infinity.

If $p=2$, we note that $|x^G \cap C_G(x)| < 2^{n/2}|I_2(S_{n/2})|$ where $I_2(H)$ is the number
of involutions in $H$.   As  the number of involutions in $S_m$ is asymptotic to 
\[\frac 1{\sqrt 2}m^{\frac m2}e^{-\frac m2+\sqrt m-\frac 14}\]
(cf. \cite[p. 155--158]{wilf}), 
 it follows that 
\[f=\frac{|x^G \cap C_G(x)|}{|x^G|}\le \frac{2^{n/2}|I_2(S_{n/2})|\,|C_G(x)|}{|G|}\le \frac{2^{n/2}|I_2(S_{n/2})|2^{n/2}|S_{n/2}|}{n!},\]
which tends to zero.

Lemma \ref{f} tells us that $e_G(P)  < 2/n + f$ and so the result follows. 
 \end{proof} 

\begin{remark}
{\rm By virtue of Lemma \ref{exp-bounded}, under the hypotheses of Theorem \ref{almostsimple} there is a bounded integer $e$ such that $P^e=1$. In particular $p\leq e$. Since the order of a finite group is bounded in terms of the maximum of its abelian subgroups 
 (see for instance \cite[Theorem 5.2]{Aiv}),
  we may replace $P$ by an abelian subgroup and simply assume that $P$ is abelian. Since $P$ has bounded exponent, we may again replace $P$ by an elementary abelian subgroup and assume that $P$ is elementary abelian. 
}
\end{remark}

\begin{remark}\label{rem3/4}
{\rm 
Note that whenever $P$ is a $p$-subgroup of a finite group $G$ such that $P\not\leq O_p(G)$ there is $g \in G$ such that $P$ does not commute with $P^g$. Hence $\pr(P,P^g) \le 3/4$ by Lemma \ref{lem:2.2}(\ref{3/4}). 
}
\end{remark}
 
\begin{lemma}\label{alter}  Theorem \ref{almostsimple} holds for alternating and symmetric groups.
\end{lemma}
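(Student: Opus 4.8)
The plan is to show that if $G$ is almost simple with socle $A_m$ (alternating group of degree $m$) and Hypothesis \ref{basic} holds, then $|P|$ is bounded. By the Remark following Theorem \ref{almostsimple} we may assume $P$ is elementary abelian, and by Remark \ref{rem3/4} we have $\pr(P,P^g)\le 3/4$ for the relevant $g$, so the constant $\ep$ is meaningful. First I would reduce to the case $G=S_m$: passing from $A_m$ to $S_m$ only changes $|P|$ by a factor of $2$, and if $G$ involves a diagonal or field-type outer automorphism (only relevant for small exceptional $m$, i.e. $m=6$), those cases have bounded order anyway. So assume $G=S_m$ and $P\le S_m$ elementary abelian of order $p^k$, and suppose for contradiction that $|P|$ (equivalently $m$, since $p\le e$ is bounded by Lemma \ref{exp-bounded}) is large.

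The key idea is to locate inside $P$ a subgroup that acts, on some subset of the $m$ points, like a regular elementary abelian $p$-group on a set of size $p^a$ with $a\to\infty$, and then invoke Lemma \ref{regular} together with averaging. Concretely: an elementary abelian $p$-subgroup of $S_m$ decomposes the $m$ points into orbits, each of size $1$ or a power of $p$; on each nontrivial orbit $P$ acts as a quotient, hence a regular action of some elementary abelian $p$-group. If $P$ has an orbit of size $p^a$ with $a$ large, restrict attention to that orbit: there is a subgroup $R\le P$ of bounded index (the kernel of the action elsewhere can be ignored since we only need a lower bound on the orbit of the symmetric group on $p^a$ points) acting regularly on $p^a$ points. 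If instead all orbits are small (size $\le p^b$ for bounded $b$) but there are many of them, then $P$ embeds into a large direct product of copies of a bounded group, and one still finds a large "diagonal-free" configuration; here I would use the structure of $P$ as a subgroup of a product of regular groups and choose a complement of bounded index exhibiting a regular action on $\Omega$ a union of orbits of total size a large power of $p$ — more carefully, by a pigeonhole/Maschke-type argument find $R\le P$ of bounded index and a $P$-invariant set $\Omega$ of size $p^a$, $a\to\infty$, on which $R$ is semiregular.

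Once such an $R\le P$ and invariant set $\Omega$ are in hand, the contradiction comes from Lemma \ref{regular} (or rather its proof via Lemma \ref{f}): take $G_0=\Sym(\Omega)\le S_m$; for a fixed-point-free $x\in R$ of order $p$ on $\Omega$, the proof of Lemma \ref{regular} shows the probability that $x$ commutes with a random $\Sym(\Omega)$-conjugate of any nontrivial $y\in R$ tends to $0$, hence $e_{G_0}(R)\to 0$, so some conjugate $R^{g}$ (with $g\in G_0\le G$) satisfies $\pr(R,R^g)<\ep$; since $R\le P$, Lemma \ref{lem:2.2}(2) gives $\pr(P,P^g)\le \pr(R,R^g)<\ep$, contradicting Hypothesis \ref{basic}. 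The main obstacle I anticipate is the bookkeeping in the "many small orbits" case: one must produce from an arbitrary elementary abelian $P\le S_m$ a bounded-index subgroup with a genuinely regular (not just faithful) action on an invariant set whose size is an unbounded power of $p$, so that Lemma \ref{regular} applies verbatim; handling the interaction of $P$'s orbit structure with the need for regularity, rather than mere transitivity or faithfulness, is the delicate point.
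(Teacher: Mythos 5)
Your reduction to $G=S_m$ with $P$ elementary abelian, and your treatment of the large-orbit case, both track the paper's proof: on a largest orbit $\Omega$, write $P=P_0\times Q$ with $P_0$ regular on $\Omega$, apply Lemma \ref{regular} to get $e_{\Sym(\Omega)}(P_0)<\ep$ once $|\Omega|$ is large, pick $g\in\Sym(\Omega)$ with $\pr(P_0,P_0^g)<\ep$, and lift $g$ to $\tilde g\in S_m$ to contradict Hypothesis~\ref{basic}. This bounds the largest orbit size.

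The gap is in your ``many small orbits'' case, and it is structural, not bookkeeping. Suppose every $P$-orbit has size at most $p^b$ with $b$ bounded, and suppose $R\le P$ is semiregular on a $P$-invariant set $\Omega$. Each $R$-orbit inside $\Omega$ lies inside a $P$-orbit, hence has size at most $p^b$; but semiregularity forces every $R$-orbit to have size exactly $|R|$, so $|R|\le p^b$. Thus a bounded-index $R\le P$ semiregular on a union of $P$-orbits can exist only when $|P|$ is already bounded: you cannot manufacture the unbounded exponent $a$ needed for Lemma~\ref{regular}, and the construction is circular. The paper resolves this case with a completely different device. Let $s$ be the least number of disjoint $P$-invariant subsets $\Omega_1,\dots,\Omega_s$, each of size at least $5$, on whose union $P$ acts faithfully. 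Writing $P=P_s\times Q_s$ with $Q_s=\ker(P\to\Sym(\Omega_s))$, applying Remark~\ref{rem3/4} on $\Omega_s$ to get a conjugator $g_s$ supported there with $\pr(P_s,P_s^{g_s})\le 3/4$, and inducting on the remaining blocks with conjugators of disjoint support, one obtains $g$ with $\pr(P,P^g)<(3/4)^s$. Hence $(3/4)^s\ge\ep$, so $s$ is $\ep$-bounded; combined with the bounded orbit size this bounds $|P|$. You should look for this multiplicative $(3/4)^s$ bound via Lemma~\ref{lem:2.2}(\ref{3/4}) rather than for a (semi)regular action on a large set.
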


\begin{proof}  Assume Hypothesis \ref{basic} with $G=S_n$, the symmetric group of degree $n,$ and $P$ an elementary abelian $p$-subgroup of $G$. Let $m$ the size of the largest orbit for $P$. 

 We wish to show that $m$ is bounded. So assume that $m\geq5$. Let $\Omega$ be an orbit of size $m$, and let $Q$ be the kernel of $P$ on $\Omega$. Since $P$ is elementary abelian, there exists $P_0$ such that $P = P_0 \times Q$.
   Observe that $P_0$ acts regularly on $\Omega$. By Lemma \ref{regular}, there exists $n_0$ such that if $m > n_0$, then $e_{S_m}(P_0) < \ep$. 
  In particular for at least one element $g\in S_m$, we have $\pr(P_0, P_0^g) <\epsilon$. Thus we can find $\tilde g \in S_n$, such that $\tilde g$ acts as $g$ on $\Omega$, and so 
  \[ \pr(P,P^{\tilde g}) \le \pr(P_0, P_0^g) < \epsilon, \] 
  against our assumptions. Therefore, $m\le n_0$. 
  
Let $s$ be the minimum number such that there are $s$ $P$-invariant disjoint subsets $\Omega_1,\dots,\Omega_s$, each of size at least $5$, such that $P$ acts faithfully on their union.

Now we need to bound $s$. We claim that there exists $g\in G$ such that $\pr(P,P^g) < (3/4)^s$. This will be proved by induction on $s$. If $s=1$, the result is immediate from Remark \ref{rem3/4}; 
 so assume that $s\geq2$.

Let $Q_s$ be the kernel of the action of $P$ on $\Omega_s$ and write $P=P_s\times Q_s$, with $P_s$ acting faithfully on $\Omega_s$. 
By Remark \ref{rem3/4} 
 there is $g_s\in G$ with support in $\Omega_s$ such that $\pr(P_s,P_s^{g_s}) <3/4$. Observe that $Q_s$ acts faithfully on the union of $\Omega_1,\dots,\Omega_{s-1}$. By induction there exists $g_1 \in S_n$, with support disjoint from that of $g_s$, such that $\pr(Q,Q^{g_1}) < (3/4)^{s-1}$. Therefore, for $g=g_1g_2$, we have $\pr(P,P^g) < (3/4)^s$, as claimed.   
Thus, $(3/4)^s\geq\ep$ and so $s$ is bounded. 

Note that $P$ can be embedded in a product of at most $s$ symmetric groups of degree at most $m$. Hence $|P|$ is bounded and the case where $G=S_n$ is done. 

To deal with the case where $G=A_n$ is the alternating group, observe that $P$ contains a subgroup $R$ of index at most $p^2$ such that any conjugate of $R$ in $S_n$ is in fact a conjugate in $A_n$. So the case of $A_n$ is immediate from the above.
\end{proof} 

\begin{lemma}\label{lie}  Let $G$ be an almost simple group of Lie type defined over a field of size $q$ and Lie rank at least $2$. If $q$ is sufficiently large, then for every subgroup $H$ of $G$ we have $e_G(H) <  2/|H| + 4/3q$.
In any case, $e_G(H)<2/|H|+f(q)$, where $f(q)$ tends to $0$ as $q$ goes to infinity.
\end{lemma}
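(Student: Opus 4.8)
The plan is to recast the quantity controlled by Lemma~\ref{f} as a fixed-point ratio and then to invoke the known bounds for groups of Lie type.

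By Lemma~\ref{f} it suffices to exhibit a function $f=f(q)$ with $f(q)\to 0$ as $q\to\infty$, and $f(q)<4/(3q)$ for all sufficiently large $q$ when $G$ has Lie rank at least $2$, such that for every pair of nontrivial $x,y\in G$ the probability that $x$ commutes with a random $G$-conjugate of $y$ is at most $f(q)$. By the remark preceding Lemma~\ref{f} this probability equals $|y^G\cap C_G(x)|/|y^G|$; it is symmetric in $x$ and $y$, since after reindexing $g\mapsto g^{-1}$ it also equals the probability that $y$ commutes with a random conjugate of $x$, namely $|x^G\cap C_G(y)|/|x^G|$ -- the proportion of the class $x^G$ centralised by $y$ under conjugation. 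As $G$ is almost simple, $Z(G)=1$, so $C_G(y)$ is a proper subgroup of $G$; fixing a maximal subgroup $M$ with $C_G(y)\le M$ gives
\[
\frac{|x^G\cap C_G(y)|}{|x^G|}\ \le\ \frac{|x^G\cap M|}{|x^G|}=\operatorname{fpr}(x,G/M),
\]
the fixed-point ratio of $x$ in the primitive action of $G$ on $G/M$. (Passing from $x,y$ to suitable powers of prime order only increases the left-hand side, so one may further assume $x$ and $y$ have prime order, which shortens the later case analysis.)

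It then remains to bound $\operatorname{fpr}(x,G/M)$ uniformly. The key point is that in the ``generic'' situation one may take $M$ parabolic: if $x$ (or, symmetrically, $y$) has non-trivial $p$-part, then that $p$-part generates a non-trivial unipotent subgroup normalised by $C_G(y)$, so by the Borel--Tits theorem $C_G(y)$ lies in a proper parabolic subgroup; and if $G$ is classical and $x$ is semisimple, then $C_G(x)$ is, apart from a few orthogonal-type cases, a Levi subgroup, again inside a proper parabolic. For actions on cosets of parabolic subgroups -- on subspaces or flags of the natural module in the classical case -- the fixed-point ratio is given by an explicit incidence count, from which one reads off that it is at most $(1+o(1))/q$, the order of magnitude $1/q$ being achieved only for long root elements. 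The remaining configurations -- the exceptional groups, and the (relatively few) maximal overgroups $M$ of $C_G(y)$ that are not of parabolic type -- are handled by the fixed-point-ratio estimates available in the literature (Liebeck--Shalev, Burness, Lawther--Liebeck--Seitz, and others), which give $\operatorname{fpr}(x,G/M)\le f(q)$ with $f(q)\to 0$ and, on inspecting the extremal cases, $f(q)<4/(3q)$ once $q$ is large and the rank is at least $2$. Feeding this back into Lemma~\ref{f} gives both assertions, $4/(3q)$ being merely a convenient weakening of the true leading term $1/q$.

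The hard part is precisely this uniformity. For bounded-rank groups -- every exceptional group, and every classical group once $q$ is large -- the naive estimate $|x^G\cap C_G(y)|\le|C_G(y)|$ is hopelessly weak (in $E_8(q)$ it can overshoot $|x^G|$ by a large power of $q$), so one genuinely needs the structural fixed-point-space computations of the literature together with a careful account of how the conjugacy classes of a reductive subgroup fuse into those of $G$; this is where essentially all the difficulty lies, the reduction above being routine. A secondary point is to pin the extremal fixed-point ratio down sharply enough (not merely up to a constant) to guarantee that it beats $4/(3q)$, which is one reason the rank-$\ge 2$ hypothesis is convenient.
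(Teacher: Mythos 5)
Your reduction is the same as the paper's: both feed into Lemma \ref{f}, and both recast the commuting probability for nontrivial $x,y$ as the fixed-point ratio $\operatorname{fpr}(x,G/M)$ for a maximal subgroup $M\ge C_G(y)$ --- you via the symmetry $|y^G\cap C_G(x)|/|y^G|=|x^G\cap C_G(y)|/|x^G|\le|x^G\cap M|/|x^G|$, the paper via a maximal block system on the $G$-set $y^G$, which is $G/M$ in disguise since the action on $y^G$ is the coset action on $G/C_G(y)$. The divergence is the final step. The paper simply cites \cite[Theorem 1]{LS} (Liebeck--Saxl), which is precisely the statement you need: $\operatorname{fpr}(g)\le 4/(3q)$ for every nontrivial $g$ in every faithful primitive action of an almost simple group of Lie type over a field of size $q$, with exceptions confined to rank $1$ or bounded order. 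That single citation finishes the proof. You instead propose to rebuild the bound from Borel--Tits reductions and a survey of the fixed-point-ratio literature; this programme could be carried out, but your sketch is incomplete (semisimple centralizers need not be Levi subgroups nor lie in any parabolic, and the claim of an ``explicit incidence count'' giving $(1+o(1))/q$ uniformly across all maximal parabolics is asserted rather than argued). Since you already arrive at the exact constant $4/(3q)$ that Liebeck--Saxl gives, the right move is to invoke their theorem directly rather than reprove it.
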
  

\begin{proof}   Let $x, y$ be nontrivial elements of $G$. 
 The group $G$ naturally acts on the conjugacy class $y^G$ and the kernel of this action is trivial. Let $\Sigma=\{\B_1,\dots,B_k\}$ be a system of maximal blocks of imprimitivity, each of order $m$, so that $|y^G|=m|\Sigma|$ and $G$ acts faithfully and primitively on $\Sigma$. 

If $x\in G$ fixes $B_i$, then $x$ centralizes at most $m$ elements in $B_i$. Therefore  $|y^G\cap C_G(x)|\le m|fix_{\Sigma}(x)|.$  
By  \cite[Theorem 1]{LS} 
 $ |fix_{\Sigma}(x)|/|\Sigma|\le 4/3q$ 
 apart from some known exceptions that have rank $1$ or small order. In any case,  
 $ |fix_{\Sigma}(x)|/|\Sigma| \le f(q)$, 
  where $f(q)$ tends to $0$ as $q$ goes to infinity.
It follows that 
\[\frac{ |y^G\cap C_G(x)|}{|y^G|}\le   \frac{ m |fix_{\Sigma}(x)|}{m|\Sigma|}\le f(q),\]
 so the probability that  $x$ and a random conjugate of $y$ commute is bounded by $f(q)$. 

Now using Lemma \ref{f} we deduce that 
   \[  e_G(H)  < 2/|H| + f(q), \] as required.
\end{proof}

\begin{lemma}\label{qbounded}  There is a bounded positive integer $q_0$ such that, if under Hypothesis \ref{basic} the group $G$ is an almost simple group of Lie type defined over a field of size $q>q_0$, then $|P| \le 4/\ep$.
\end{lemma}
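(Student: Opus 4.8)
The idea is to combine the averaging estimate of Lemma \ref{lie} with the pointwise lower bound coming from Hypothesis \ref{basic}. Recall that under Hypothesis \ref{basic} we have $\pr(P,P^x)\geq\ep$ for every $x\in G$; averaging this inequality over all $x\in G$ gives at once $e_G(P)\geq\ep$. On the other hand, Lemma \ref{lie} applies to the almost simple group $G$ of Lie type over a field of size $q$, provided the Lie rank is at least $2$ (the rank-$1$ case has to be discussed separately), and yields $e_G(P)<2/|P|+f(q)$, where $f(q)\to 0$ as $q\to\infty$. Combining these, $\ep<2/|P|+f(q)$, so $2/|P|>\ep-f(q)$, and as soon as $q$ is large enough that $f(q)\le\ep/2$ we obtain $|P|<4/\ep$.

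Concretely, first I would fix, using the fact that $f(q)\to 0$, a bounded integer $q_1$ such that $f(q)\leq\ep/2$ for all $q>q_1$; here boundedness means $q_1$ depends only on $\ep$, which is legitimate since $f$ is an absolute function of $q$ coming from \cite{LS}. For $q>q_1$ and $G$ of Lie rank at least $2$, the displayed computation of the previous paragraph gives $|P|\le 4/\ep$ directly (strict inequality $2/|P|>\ep/2$ rearranges to $|P|<4/\ep$). It remains to treat the rank-$1$ groups, i.e. $G$ with socle $\PSL_2(q)$, $\PSU_3(q)$, ${}^2B_2(q)$, or ${}^2G_2(q)$. These are handled either by the "in any case" clause of Lemma \ref{lie} — whose conclusion $e_G(H)<2/|H|+f(q)$ is stated without the rank hypothesis — or, if one prefers to be safe, by a direct argument: a $p$-subgroup $P\not\leq O_p(G)=1$ of a rank-$1$ group of characteristic coprime to... no — rather, one notes that in a rank-$1$ group the Sylow $p$-subgroups (for $p$ the defining characteristic) are themselves of bounded rank as a function of $q$, so $|P|$ is controlled once $q$ is; and if $p$ is not the defining characteristic, $p$ is bounded by Lemma \ref{p-bounded} (Remark, $p\le e$) and the relevant Sylow subgroups have bounded order. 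Since the statement of the lemma only asserts the existence of some bounded $q_0$, I may simply take $q_0=\max(q_1, q_2)$ where $q_2$ absorbs whatever threshold the rank-$1$ analysis requires.

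The main obstacle is the bookkeeping around the exceptional cases in \cite[Theorem 1]{LS}: that theorem gives the clean bound $|\mathrm{fix}_\Sigma(x)|/|\Sigma|\le 4/3q$ only away from a known list of exceptions of rank $1$ or small order, so one must be careful to invoke only the weaker universal bound $f(q)\to 0$ for those, which is exactly what Lemma \ref{lie} already packages. The only genuine content beyond quoting Lemma \ref{lie} is therefore the verification that the rank-$1$ almost simple groups of large $q$ do not contain large elementary abelian $p$-subgroups escaping $O_p(G)$ with $\pr(P,P^x)\ge\ep$ — and for this it is cleanest to apply the universal form of Lemma \ref{lie} uniformly, so that no separate case analysis is actually needed and the proof is the one-line combination $\ep\le e_G(P)<2/|P|+f(q)$ with $q_0$ chosen so that $f(q_0)\le\ep/2$.
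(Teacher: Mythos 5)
Your argument is essentially the paper's: the paper likewise picks a conjugate $g$ with $\pr(P,P^g)\le e_G(P)$ (equivalently, averages the hypothesis to get $e_G(P)\ge\ep$), invokes the ``in any case'' bound $e_G(P)<2/|P|+f(q)$ from Lemma \ref{lie}, and chooses $q_0$ so that $f(q)<\ep/2$ for $q>q_0$, giving $|P|\le 4/\ep$. One small caution about your aside: the fallback you sketch for rank-$1$ groups in defining characteristic is wrong as stated (for $\PSL_2(q)$ with $q=p^a$, the Sylow $p$-subgroup has order $q$, which grows with $q$, so it is certainly not ``controlled once $q$ is'' in the regime $q>q_0$), but as you yourself conclude, no such separate analysis is required because the universal clause of Lemma \ref{lie} already covers these cases, and that is exactly how the paper proceeds.
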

\begin{proof} Let $f(q)$ be a function as in Lemma \ref{lie}. There exists a number $q_0$ depending only on $\ep$ such that if $q>q_0$, then $f(q) < \ep/2$. It follows from Lemma \ref{lie} that $e_G(P) < 2/|P|+\ep/2$. There is an element $g\in G$ such that $\pr(P, P^g)\le e_G(P)$, while $\pr(P, P^g) \ge \ep$  by hypotheses. 
Putting all this together we get 
\[ \ep \le \pr(P, P^g) \le e_G(P) <  2/|P| + \ep/2,\] 
whence $2/|P| \ge \ep/2$ and so $|P| \le 4/\ep$. 
\end{proof}

\begin{lemma}\label{difchara} Assume Hypothesis \ref{basic} with $G$ a simple group of Lie type in characteristic other than $p$. Then the order of $P$ is bounded.
\end{lemma}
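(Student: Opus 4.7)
My plan is to bound in turn the field size $q$, the prime $p$, and the Lie rank $n$ of $G$.

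\emph{First, bounding $q$.} I would apply Lemma \ref{qbounded}: for $G$ of Lie rank at least $2$ there is a bounded $q_0$ such that $q>q_0$ already gives $|P|\leq 4/\epsilon$, so I may assume $q\leq q_0$. The rank-$1$ groups ($\mathrm{PSL}_2(q)$, $\mathrm{PSU}_3(q)$, Suzuki and Ree) have orders polynomial in $q$ of low degree, and for them a direct fixed-point-ratio argument parallel to the proof of Lemma \ref{lie} (using their standard $2$-transitive actions of degree $q+1$, $q^3+1$, etc.) again bounds $q$. In particular, the defining characteristic $r$ of $G$ is bounded.

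\emph{Next, bounding $p$.} Since $G$ is simple nonabelian and $P\neq 1$, we have $O_p(G)=1$ and $p\in\pi(G)=\pi(G/O_p(G))$, so Lemma \ref{p-bounded} gives that $p$ is bounded.

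\emph{Finally, bounding the rank $n$.} Once $p$ and $q$ are bounded, $|G|$ is bounded as soon as $n$ is. My plan here is to exploit the Weyl group $W=N_G(T)/T$ of a maximally split torus $T$: for classical $G$ of rank $n$, $W$ involves a symmetric group $S_m$ with $m$ comparable to $n$. By Lemma \ref{trivial} applied to $N_G(T)$, the pair $(N_G(T),\, P\cap N_G(T))$ satisfies Hypothesis \ref{basic}, and projecting to $W$ yields a pair $(W,\bar P)$ still satisfying the hypothesis. Lemma \ref{alter} then bounds the $p$-part of the image of $\bar P$ in the symmetric quotient of $W$; combined with the boundedness of $|T|_p$ (a bounded factor $|q^d\pm 1|_p$ per cyclotomic component of the character lattice), this bounds $n$. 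Exceptional types have bounded Lie rank and pose no difficulty.

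\emph{Main obstacle.} The delicate point is ensuring that the image of $P$ in $W$ is large enough to reflect the Lie rank $n$. A priori $P\cap N_G(T)$ could be much smaller than $P$. The remedy I would use is to take $T$ to be a Sylow $\Phi_d$-torus in the sense of Brou\'e-Malle, where $d$ is the multiplicative order of $q$ modulo $p$: then $P$ normalizes $T$ up to a subgroup of index bounded in terms of $p$ and $q$, and its image in $W$ contains a Sylow $p$-subgroup of the relevant reflection subquotient (a wreath product involving $S_{\lfloor n/d\rfloor}$ in the classical case), to which Lemma \ref{alter} applies cleanly.
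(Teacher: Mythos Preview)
Your plan to bound $q$ and $p$ is fine and matches the paper (in fact Lemma~\ref{qbounded} already applies to all ranks via the second clause of Lemma~\ref{lie}, so your separate treatment of rank~$1$ is unnecessary, though harmless). The real problem is the rank-bounding step.

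Hypothesis~\ref{basic} does \emph{not} assume that $P$ is a Sylow $p$-subgroup; it is an arbitrary $p$-subgroup. Consequently, even after arranging $P\le N_G(T)$ for a suitable $\Phi_d$-torus $T$, there is no reason for the image $\bar P$ of $P$ in the relative Weyl group to contain a Sylow $p$-subgroup of the symmetric subquotient $S_{\lfloor n/d\rfloor}$. Concretely, $P$ could lie entirely in $T$: if $p\mid q-1$ in $\mathrm{PSL}_n(q)$, the diagonal torus has $p$-part $|q-1|_p^{\,n-1}$, and an elementary abelian $P\le T$ of that size has trivial image in $W$, so Lemma~\ref{alter} tells you nothing about $n$. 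Your parenthetical claim that ``$|T|_p$ is bounded (a bounded factor per cyclotomic component)'' is precisely the point at issue: each factor is bounded, but there are $\sim n$ of them, so $|T|_p$ grows with $n$. As written, the argument is circular --- you need $n$ bounded to bound $|T|_p$, and you are invoking $|T|_p$ bounded to conclude $n$ is bounded.

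The paper's route, once $p$ and $q$ are bounded, is entirely different and does not go through the Weyl group. It lifts $P$ to an elementary abelian subgroup of $\mathrm{GL}(V)$ (using Lemma~\ref{lift} to preserve the hypothesis through the central extension) and works directly with the natural module $V$. A \emph{good} $P$-submodule is defined as a nondegenerate $W\le V$ minimal subject to $G(W)/Z(G(W))$ being almost simple with the image of $P$ noncentral; such $W$ are shown to have bounded dimension. One then writes a faithful $P$-module as an orthogonal sum $U_1\oplus\cdots\oplus U_k$ of good pieces with $k$ minimal, and bounds $k$ by pairing the summands: for each pair $U_{2i-1}\oplus U_{2i}$ one finds $g_i\in G$ supported there with $\pr(R_i,R_i^{g_i})\le 3/4$, and the product $g=g_1\cdots g_l$ gives $\ep\le\pr(R_0,R_0^g)\le(3/4)^{\lfloor k/2\rfloor}$. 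This orthogonal decomposition of the natural module is exactly what handles the ``$P$ inside the torus'' case that your Weyl-group argument misses.
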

\begin{proof} As before, we can assume that $P$ is elementary abelian and the prime $p$ is bounded. Because of Lemma \ref{qbounded} we can assume that the size of the definition field is bounded. Then the order of $G$ is bounded in terms of the rank and so without loss of generality we can assume that $G$ is of classical type. Here $V$ is the natural module for $G$. It is more convenient to work in the linear group so we lift $G$ to a subgroup of ${\mathrm{GL}}(V)$, which we still denote by $G$. Let $T$ be the full preimage of $P$ in ${\mathrm{GL}}(V)$. Then $T'$ has order at most $p$ and so $T$ contains an elementary abelian $p$-subgroup $H$ of order at least $|P|^{\frac 12}$. Taking into account that $p$ is bounded we invoke Lemma \ref{lift}, which shows that there is a positive number $\delta$, depending only on $\ep$, such that for every $g\in G$ we have $\pr(H,H^g)\ge\delta$. As it is enough to bound the order of $H$, we may assume that $P$ is an elementary abelian subgroup of ${\mathrm{GL}}(V)$. It will be convenient to work under the additional assumption that $P\cap Z(G)=1$. This can be done without loss of generality, taking into account that $P$ contains a subgroup of index $p$ that avoids $Z(G)$ and  replacing $P$ by that subgroup.

Aside from the case of ${\mathrm{SL}}(V)$, we have a form and so the notion of nondegenerate and totally singular are well defined.  In the case of ${\mathrm{SL}}(V)$, we consider all subspaces both totally singular and nondegenerate.

Given a subspace $W$ of $V$, we write $G(W)$ for the group induced by the action of the stabilizer of $W$ on $W$. We say that a $P$-submodule $W$ is 
\emph{good}
  if it is a nondegenerate submodule that is minimal subject to the condition that  $G(W)/Z(G(W))$ is almost simple and  the image of $P$ in $G(W)$ is not central. 
  Note that  if  $G(W)/Z(G(W))$ is almost simple, then  the image of $P$ in $G(W)$ is not central whenever it is non-cyclic. 
  Moreover, if $W$ is good, then the image of $P$ in $G(W)$  does not commute with some of its conjugates.

Since $P$ is not contained in $Z(G)$, good $P$-submodules exist. 

We claim that  the dimension of every good $P$-submodule is bounded. 
Observe that irreducible $P$-submodules all have dimension at most $p-1$ and recall that $p$ is bounded. 
 Assume that $P$ is non-cyclic and observe that the group $G(W)$ is  almost simple modulo its centre whenever $W$ is a nondegenerate subspace of dimension at least $5$. Every $P$-submodule of dimension $d$ is contained in a nondegenerate $P$-submodule of dimension at most $2d$. Let $U$ be a good submodule. There are $2$ irreducible $P$-submodules $U_1,U_2\leq U$ such that the image of $P$ in $G(U_1+U_2)$ is non-cyclic. This is contained in a nondegenerate $P$-submodule $U_3\leq U$ of dimension at most twice that of $U_1+U_2$. If $G(U_3)$ is  almost simple modulo its centre, then, because of minimality, $U_3=U$. Otherwise, the dimension of $U_3$ is at most $4$ and $U\cap U_3^\perp$ contains a nondegenerate $P$-submodule $U_4$, which is a sum of at most two irreducible $P$-submodules. Set $U_5=U_3+U_4$.  If $G(U_5)$ is  almost simple modulo its centre, then because of minimality $U_5=U$. Otherwise, the dimension of $U_5$ is exactly $4$ and $U\cap U_5^\perp$ contains a nondegenerate $P$-submodule $U_6$, which is a sum of at most two irreducible $P$-submodules. It is clear that the dimension of $U_5+U_6$ is at least $6$ and $G(U_5+U_6)$ is  almost simple modulo its centre. Because of minimality, $U=U_5+U_6$. Hence, the dimension of $U$ is bounded.

Let $W$ be a maximal $P$-submodule that can be written as an orthogonal sum of good submodules. Let $Q$ be the kernel of the action of $P$ on $W$. It follows that either $G(W^\perp)$ is not  almost simple modulo its centre or $Q$ embeds into $Z(G(W^\perp))$. In either case the order of $Q$ is bounded and we can replace $P$ by a subgroup that avoids $Q$. So without loss of generality we assume that $P$ acts on $W$ faithfully.

Let now $k$ be the minimal number such that there is a faithful $P$-submodule $U$, which can be written as an orthogonal sum
$$
 U=U_1\oplus U_2\oplus\dots\oplus U_k,
$$ where $U_i$ are good submodules. 

Obviously, $|P|$ is bounded in terms of $k$ and so we now need to show that $k$ is $\ep$-bounded.

Let $Q_i$ be the kernel of the action of $P$ on $U_i$ and note that the index $|P:Q_i|$ is bounded. Since the module $U$ is faithful, $\cap_{i\leq k} Q_i=1$. 
For each $j=1,\dots,m$ set 
\[P_j=\bigcap_{1\leq i\leq k, i\neq j}Q_i.\]
Observe that because of minimality of $k$ the subgroup $P_j$ acts nontrivially on $V_i$ if and only if $i=j$. It follows that the product $\prod P_j$ is direct. Possibly some of the factors $P_j$ are central in $G(U_j)$. To avoid such cases we consider larger submodules.

Namely, let $l=\lfloor k/2\rfloor$ and set $R_i=P_{2i-1}P_{2i}$ and $W_i=U_{2i-1}\oplus U_{2i}$ for $i=1,\dots,l$.

Since $R_i$ is not cyclic, for each $i\in\{1,\dots,l\}$ there is $g_i\in G$ stabilizing $W_i$ and trivial on $W_i^{\perp}$ such that $R_i$ does not commute with $R_i^{g_i}$. We remark that the elements $g_i$ commute and $[R_j,g_i]=1$ if and only if $i\ne j$.
 Let $R_0=\prod_iR_i$. Note that $R_0$ is a direct product of the $R_i$.

 On the one hand, we have $\pr ( {R}_0, {R}_0^{ {g}})\ge \pr (P, P^g) \ge \ep$ for every $ {g}\in {G}$.
On the other hand, for $g=g_1\cdots g_l$ we have that $R_j^g=R_j^{g_j}$ and, since $\pr(R_j,R_j^{g_j})\le 3/4$ by Lemma \ref{lem:2.2}(\ref{3/4}), we conclude that 
 \[\ep\le\pr({R}_0,{R}_0^{{g}})\le \prod_{j=1}^l\pr ({R}_j,{R}_j^{{g}_j})\le \left(\frac 34\right)^l.\]
 This implies that $l\le\frac{\log \ep}{\log (3/4)}$ is $\ep$-bounded. Hence, $k$ is bounded and this completes the proof.
\end{proof}

We can now complete the proof of Theorem \ref{almostsimple}.

\begin{proof}[Proof of Theorem \ref{almostsimple}]  Obviously, we have nothing to prove if $S$ is a sporadic group. In the case where $S$ is the alternating group the result is established in Lemma \ref{alter}. So we assume that $S$ is of Lie type over the field of size $q$. As before, we may assume that $P$ is elementary abelian. Observe that $|P:(S\cap P)|$ is at most $p^4$ (see \cite{atlas}) 
  so we replace $P$ by $S\cap P$ and assume that $G$ is simple. Let $q_0$ be as in Lemma \ref{qbounded}. If $q>q_0$, the theorem follows from Lemma \ref{qbounded}. Moreover, if $p$ is coprime to $q$, the theorem follows from Lemma \ref{difchara}. Thus, we are reduced to the case where $q$ is a $p$-power and $q\leq q_0$. In this case $|G|$ is bounded in terms of the rank and $\ep$. The proof is complete.
\end{proof}

\section{Theorems \ref{main1} and \ref{main2}}\label{thms1.1-1.3} 

Here we complete the proofs of Theorems  \ref{main1} and \ref{main2} as well as corollaries mentioned in the introduction.

\begin{proposition}\label{soluble-tot}
Assume Hypothesis \ref{basic}. If $G$ is soluble and $O_p(G)=1$, then the order of $P$ is bounded. 
\end{proposition}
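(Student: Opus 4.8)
The plan is to reduce to Proposition~\ref{soluble} by passing from $G$ to the smaller group $G_0 = F(G)P$. The only genuine use of solvability will be the classical fact that $C_G(F(G)) \le F(G)$ (see \cite{robinson}), and this is precisely what allows the hypothesis $O_p(G)=1$ to survive the passage to $G_0$.

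First I would set $M = F(G)$. Since $O_p(G)=1$ and $F(G)$ is the direct product of the $O_q(G)$ over the primes $q$, the subgroup $M$ is a $p'$-group; in particular $M \cap P = 1$. Put $G_0 = MP$, which is a subgroup of $G$ because $M \unlhd G$. Since $P \le G_0$, Lemma~\ref{trivial}(1) (with $H=G_0$) shows that the pair $(G_0,P)$ again satisfies Hypothesis~\ref{basic} with the same $\epsilon$: indeed $\pr(P,P^x)\ge\epsilon$ already holds for all $x \in G_0 \subseteq G$.

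Next I would verify $O_p(G_0)=1$. Both $M$ and $O_p(G_0)$ are normal in $G_0$, so $[O_p(G_0),M] \le O_p(G_0)\cap M = 1$, the intersection being trivial because one factor is a $p$-group and the other a $p'$-group; hence $O_p(G_0)$ centralizes $M$. Now solvability of $G$ gives $C_G(M) = C_G(F(G)) \le F(G) = M$, so $O_p(G_0) \le C_{G_0}(M) \le M$, and being a $p$-subgroup of the $p'$-group $M$ it is trivial. Finally, $M$ is a nilpotent normal subgroup of $G_0$, so $M \le F(G_0)$ and therefore $G_0 = MP = F(G_0)P$. Thus $G_0$ meets all the hypotheses of Proposition~\ref{soluble} (with the ``$M$'' there taken to be $F(G_0)$), and that proposition yields that $|P|$ is bounded in terms of $\epsilon$ only.

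The main obstacle here is essentially absent once Proposition~\ref{soluble} is available: the one point requiring care is that shrinking $G$ to $G_0=F(G)P$ really does preserve the condition $O_p=1$, and this is exactly where solvability enters through $C_G(F(G))\le F(G)$. Without solvability one could not exclude a nontrivial $O_p(G_0)$ coming from the generalized Fitting structure, and the reduction would break down — which is consistent with Example~\ref{example}, where $O_p(G)=1$ but $|P|$ is unbounded.
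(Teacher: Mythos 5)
Your proof is correct and is essentially the paper's argument, just spelled out in more detail: the paper also observes that $F(G)$ is a $p'$-group, that $C_P(F(G))=1$ (via $C_G(F(G))\le F(G)$ from solubility), and then applies Proposition~\ref{soluble} to the subgroup $F(G)P$. Your careful verification that $O_p(F(G)P)=1$ and that $F(G)P=F(F(G)P)P$ fills in exactly the steps the paper leaves implicit.
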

   \begin{proof}
Note that $F(G)$ is a $p'$-group and $C_P(F(G))=1$. We consider the subgroup $F(G)P$ and apply Proposition \ref{soluble}. It follows that $|P|$ is bounded. 
 \end{proof}

Recall that the generalized Fitting subgroup $F^*(G)$ of a finite group $G$ is the subgroup generated by $F(G)$ and all components of $G$, where a component is a subnormal subgroup which is a perfect central extension of a simple group. 

We are ready to prove Theorem \ref{main2}, which we restate here for the reader's convenience.\medskip

{\it Assume Hypothesis \ref{basic} and suppose that the composition factors of $G$ isomorphic to simple groups of Lie type in characteristic $p$ have Lie rank at most $n$. Then the order of $P$ modulo $O_p(G)$ is bounded in terms of $\epsilon$ and $n$ only. }

\begin{proof}[Proof of  Theorem \ref{main2}]   Without loss of generality we may assume that $O_p(G)=1$.
Let  $R(G)$ be the soluble radical of $G$. In view of Proposition \ref{soluble-tot} the intersection $P\cap R(G)$ has bounded order. Therefore we may pass to the quotient $G/R(G)$ and assume that $R(G)=1$. 

Let  $M=F^*(G)$, which  is semisimple, and let $S$ be a simple factor  of $M$. 
By Theorem \ref{almostsimple}, the order of $N_P(S)/C_P(S)$  is bounded in terms of $n$ and $\epsilon$ only.  
Moreover, by Proposition \ref{orbit} also the index of $N_P(S)$ in $P$ is bounded.  
Therefore there exists an $(n,\epsilon)$-bounded constant $C$ such that  $|P:C_P(S)| \le C$ for every simple factor $S$ of $M$. 

Now an application of Proposition \ref{trickB} leads us to the conclusion that the order of $P$ is $(n,\epsilon)$-bounded, as claimed. 
\end{proof}

We will now prove Theorem \ref{main1}.\medskip

{\it Assume Hypothesis \ref{basic} with $P$ being a Sylow $p$-subgroup of $G$. Then the order of $P$ modulo $O_p(G)$ is bounded. Moreover $O_p(G)$ is bounded-by-abelian-by-bounded.}
\begin{proof}[Proof of  Theorem \ref{main1}]  By Neumann's Theorem \ref{neumann} $P$ is bounded-by-abelian-by-bounded. Hence, the claim about  $O_p(G)$ is immediate and we only need to show that $P/O_p(G)$ has bounded order. Pass to the quotient $G/O_p(G)$ and simply assume that $O_p(G)=1$. In view of Lemma \ref{exp-bounded} the exponent of $P$ is bounded. Observe that every classical group of Lie rank $r$ contains the alternating group $A_r$ of degree $r$. Since the exponent of the Sylow $p$-subgroup of $A_r$ goes to infinity as $r$ does, it follows that the composition factors of $G$ that are simple groups of Lie type in characteristic $p$ have bounded Lie ranks. Thus, by Theorem \ref{main2}, $P/O_p(G)$ has bounded order.
\end{proof}

We will now describe the proofs of the corollaries mentioned in the introduction. We start with Corollary \ref{cor1}:

{\it If $G$ is a finite group such that $\pr(P_1,P_2)\geq\ep>0$ for every $p\in\pi(G)$ and every two Sylow $p$-subgroups $P_1,P_2$ of $G$, then $G$ is bounded-by-abelian-by-bounded.}

\begin{proof}[Proof of Corollary \ref{cor1}] Theorem \ref{main1} tells us that, for every $p\in\pi(G)$, a Sylow $p$-subgroup of $G/F(G)$ has bounded order. Since the bound does not depend on the prime $p$, it follows that 
 only boundedly many primes divide the order of $G/F(G)$ and so $|G/F(G)|$ is bounded. Therefore,
 we can assume that $G$ is nilpotent. In view of Theorem \ref{neumann} every Sylow subgroup of $G$ is bounded-by-abelian-by-bounded. Again, since the bounds do not depend on primes, it follows that only boundedly many Sylow subgroups of $G$ are nonabelian. Hence the result.
\end{proof}

Next, we proceed with Corollary \ref{cor3}. \medskip

{\it Let a finite group $G$ contain a subgroup $K$ such that $\pr(K,K^x)\geq\ep>0$ for every $x\in G$.
\begin{enumerate} 
\item If $K$ is a Hall subgroup of $G$, then the order of $K$ modulo the Fitting subgroup $F(G)$ is bounded in terms of $\ep$ only.
\item If $n$ is the maximum of Lie ranks of composition factors of $G$, then the order of $K$ modulo $F(G)$ is bounded in terms of $n$ and $\ep$ only. 
\end{enumerate}
}
\begin{proof}[Proof of Corollary \ref{cor3}] If $K$ is a Hall subgroup of $G$, then every Sylow subgroup of $K$ is also a Sylow subgroup of $G$. Theorem \ref{main1} states that the order of every Sylow subgroup of $K$ modulo $F(G)$ is bounded. Hence, the order of $K$ modulo $F(G)$ is bounded as well.

If $n$ is the maximum of Lie ranks of composition factors of $G$, then by Theorem \ref{main2} the order of every Sylow subgroup of $K$ modulo $F(G)$ is bounded in terms of $n$ and $\ep$ only.
Thus, the order of $K$ modulo the Fitting subgroup $F(G)$ is bounded in terms of $n$ and $\ep$ only. 
\end{proof}

\section{Proof of Theorem \ref{main3}}\label{thm3}

The upper insoluble series  
\[1\leq R_1\leq L_1\leq R_2\leq L_2\leq\dots\] of a finite group $G$ is recursively defined as follows. Let  $R_1=R_1(G)$ be the soluble radical of $G$ and $L_1=L_1(G)$ the full inverse image of $F^*(G/R_1)$. Then $R_k=R_k(G)$ is the full inverse image of the soluble radical of $G/L_{k-1}$ and $L_k$ is the full inverse image of $F^*(G/R_k)$.
 
\begin{proposition}\label{insoluble} Assume Hypotheses \ref{basic}. Then the order of $P$ modulo $R_2(G)$ is $\epsilon$-bounded.
\end{proposition}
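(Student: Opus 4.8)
The goal is to bound $|P|$ modulo $R_2(G)$. The natural strategy is to pass to the quotient $G/R_1(G)$ and reduce to a situation where we can apply the results already proved for semisimple groups, in particular Theorem~\ref{almostsimple}, Proposition~\ref{orbit}, and Proposition~\ref{trickB}. First I would set $\bar G = G/R_1(G)$ and $\bar P = PR_1(G)/R_1(G)$; by Proposition~\ref{soluble-tot} the order of $P\cap R_1(G)$ is bounded (after factoring out $O_p(G)$, which we may assume trivial, apply the proposition to $R_1(G)P$), so it suffices to bound $|\bar P|$ modulo $R_2(G)/R_1(G)$. Thus without loss of generality $R_1(G)=1$, and $M:=F^*(G)=L_1(G)$ is semisimple; the target is to show $|P|$ is bounded modulo $R_2(G)$, where $R_2(G)/M$ is the soluble radical of $G/M$.

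**Key steps.** Write $M = S_1\times\cdots\times S_l$ as a product of nonabelian simple groups permuted by $P$. By Proposition~\ref{orbit} every $P$-orbit on the set of simple factors has bounded size, so $|P:N_P(S_i)|$ is bounded for each $i$. Now I would split the factors into two classes: those $S_i$ of Lie type in characteristic $p$ (of unbounded rank — these are the ones Theorem~\ref{almostsimple} does \emph{not} control), and all the others. For a factor $S$ \emph{not} of Lie type in characteristic $p$, Theorem~\ref{almostsimple} applied to the almost simple group $N_G(S)/C_G(S)$ (which satisfies Hypothesis~\ref{basic} with the image of $P$, by Lemma~\ref{trivial}) gives that $|N_P(S)C_G(S)/C_G(S)|$, hence $|N_P(S):N_P(S)\cap C_P(S)|=|N_P(S):C_P(S)|$, is bounded; combined with the orbit bound, $|P:C_P(S)|$ is bounded. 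Let $M_0$ be the product of the factors of Lie type in characteristic $p$ and $M_1$ the product of the remaining factors, so $M = M_0\times M_1$ and both are $P$-invariant. For the $M_1$ part, Proposition~\ref{trickB} (applied to $M_1 P/C_P(M_1)$, or rather to $M_1(P/C_P(M_1))$ after checking $O_p$ is trivial there) bounds $|P:C_P(M_1)|$. So it remains to handle $C_P(M_1)$ acting on $M_0$ together with the soluble part above $M$.

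**The main obstacle, and how to get past it.** The genuinely hard point is that $P$ may act with unbounded image on the characteristic-$p$ Lie-type factors $M_0$ — Example~\ref{example} shows this is unavoidable — so we cannot bound $|P|$ itself, only $|P|$ modulo something. The key observation is that the subgroup $C_P(M_1)$, modulo its centralizer of $M_0$, sits inside $\prod \aut(S_i)$ with each $S_i$ a characteristic-$p$ group, and the part of $P$ acting on $M_0$ is \emph{already inside the next layer} of the series: more precisely, $C_G(M_1)M_1/M_1$ has $M_0 M_1/M_1$ inside its generalized Fitting subgroup, and the relevant piece of $P$ — being a $p$-group acting on the characteristic-$p$ semisimple group $M_0$ — lands in $O_{p}$ of an appropriate section once we quotient out $M$. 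Concretely, after bounding $|P:C_P(M_1)|$, I would argue that $C_P(M_1)$ maps into $G/M$ with image a $p$-subgroup whose intersection with $\bar R := R_2(G)/M$ has bounded index: the image of $C_P(M_1)$ normalizes no new semisimple section (those were all absorbed into $M$ or into $M_1$, now centralized), so by the definition of the upper insoluble series its image modulo $O_p$ of $G/L_1$ is controlled, and since a $p$-group acting on $M$ and centralizing $M_1$ while only inducing inner-diagonal-type automorphisms on characteristic-$p$ factors contributes nothing outside $O_{p,\cdot}$-layers, its image in $G/R_2(G)$ is a $p$-group on which $G$ acts with the semisimple part already killed — forcing it into the soluble radical of $G/L_1$, i.e. into $\bar R$. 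Assembling: $|P:C_P(M_1)|$ is bounded, $|C_P(M_1)M/M \cap \bar R : \text{trivial}|$ is all that's left, and $|C_P(M_1)M/M|$ modulo $\bar R$ is bounded by the above. Hence $|P|$ modulo $R_2(G)$ is $\epsilon$-bounded. The delicate bookkeeping is exactly identifying that the "uncontrollable" part of $P$ (acting on characteristic-$p$ Lie-type factors) is forced into $R_2(G)$ by the structure of the upper insoluble series, and I expect that to be the technical heart of the argument.
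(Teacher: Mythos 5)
Your reduction to $R_1(G)=1$ is correct, and your treatment of the simple factors not of Lie type in characteristic $p$ via Theorem~\ref{almostsimple}, Proposition~\ref{orbit}, and Proposition~\ref{trickB} is sound. But the final step has a genuine gap. What forces a $p$-subgroup into $R_2(G)$ is that it normalizes \emph{every} simple factor of $M=F^*(G)$: the kernel $K$ of the $G$-action on the set of simple factors satisfies $K/M \hookrightarrow \prod \out(S_i)$, a soluble group, so $K\leq R_2(G)$; but an element of $C_P(M_1)$ that permutes the characteristic-$p$ factors in $M_0$ lies outside $K$ and genuinely contributes modulo $R_2(G)$. Your phrase ``only inducing inner-diagonal-type automorphisms on characteristic-$p$ factors'' is precisely the statement that needs proof, and nothing in the proposal supplies it: Proposition~\ref{orbit} bounds orbit \emph{sizes} but not the order of the induced permutation group (the number of orbits is unbounded), and Theorem~\ref{almostsimple} is by design inapplicable to these factors.

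The paper does not split by factor type at all. It picks one simple factor $S_i$ in each $P$-orbit together with a generating pair $a_i,b_i$, sets $a=\prod a_i$ and $b=\prod b_i$, and applies Lemma~\ref{lemQ} with both $a$ and $b$ to produce a bounded-index normal subgroup $Q\leq P$ such that for each $g\in Q$ there is a bounded-index $D\leq P$ with $g$ centralizing every $[a_i,d]$ and $[b_i,d]$, $d\in D$. A short case analysis (using that a $p$-element has no orbit of size $2$ for odd $p$, with a separate computation for $p=2$) then shows that for each $i$ either $Q$ normalizes $S_i$ or $|P:C_P(S_i)|$ is bounded; the second class is absorbed by Proposition~\ref{trickB}, and the resulting bounded-index subgroup of $P$ lies in the kernel of the permutation action on all simple factors, hence in $R_2(G)$. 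You would need to replicate this Lemma~\ref{lemQ}-based argument (or something equivalent) specifically for the characteristic-$p$ factors that your split leaves uncontrolled; as written, the proposal has no lever to prevent $C_P(M_1)$ from inducing an arbitrarily large permutation group on $M_0$.
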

\begin{proof} Without loss of generality we assume that $R_1(G)=1$ and set $L=L_1(G)$. Then $L$ is semisimple and $P$ acts on $L$ by conjugation. The subgroup $P$ permutes the simple factors of $L$. Let $P_0$ be the kernel of the permutational representation of $P$ on the set of simple factors of $L$. Then $P_0\leq R_2$ (see for example \cite[Lemma 2.1]{KhSh15}) 
 and so we need to show that $P_0$ has bounded index in $P$.

Suppose there are exactly $k$ $P$-orbits under the permutational action of $P$ on the simple factors of $L$ and let $M_i$ be the product of the simple factors in the $i$th orbit. 
 For $i=1,\dots,k$  choose a simple factor $S_i\leq M_i$.
In each simple factor $S_i$ choose elements $a_i$ and $b_i$ such that $S_i$ is generated by $a_i$ and $b_i$. Let 
\[ a=\prod_{1\leq i\leq k}a_i  \quad \text{ and } \quad b=\prod_{1\leq i\leq k}b_i.\]

By Lemma \ref{lemQ} there exists a normal subgroup $Q_{a}$ of bounded index in $P$ with the property that, for every $g\in Q_{a}$, there is a normal subgroup $D_{a,g}$ of bounded index in $P$ such that $g$ centralizes  $[a,D_{a,g}]$. Similarly, there exists normal subgroups $Q_b$ and $D_{b,g}$ of bounded index in $P$ with the analogous properties. Set $Q=Q_{a} \cap Q_{b}$ and observe that $Q$ is a normal subgroup of bounded index in $P$ with the property that for every element $g\in Q$ there is a normal subgroup $D=D_{a,g} \cap D_{b,g} $ of bounded index in $P$ such that $g$ centralizes both $[a,D]$ and $[b,D]$. 

Assume that  there is $g\in Q$ such that $S_{i}^g\neq S_{i}$ for an index $i$. Since $g$ centralizes $[a,D]$ and $[b,D]$, and since the factors $S_1,S_2,\dots,S_k$ belong to different orbits under the action of $P$,  for every $d\in D$ and every index $i$, the element $g$ centralizes $[a_{i},d]$ and $[b_{i},d]$. 

Keeping in mind that $S_i$ is generated by $a_i$ and $b_i$, observe that if $[a_i,D]=1$ and $[b_i,D]=1$, then $[S_i,D]=1$. In this case the index of $C_P(S_i)$ in $P$ is at most some bounded constant, say $t_0$. 
 
Otherwise, there exists an element $d \in D$ such that either $[a_i,d]\neq1$ or $[b_i,d]\neq1$. Then $g$ has nontrivial centralizer in $S_iS_{i}^d$. In this case $\{S_{i},S_{i}^d\}$ is a $g$-orbit and  $S_{i}^d= S_{i}^g$. 
Note that $S_{i}^d\neq S_{i}$ because  $S_{i}^g\ne S_{i}$. 

If $p$ is odd, then $g$ cannot have an orbit of size $2$. It follows that either $Q$ normalizes $S_i$ or the index of $C_P(S_i)$ in $P$ is at most the constant $t_0$. 

Now suppose $p=2$ and let $x\in\{a_i,b_i\}$ be such that $[x,d]\neq1$.
 The fact that $g$ centralizes  $[x,d]$ means that $g^2$ centralizes $x$ and 
 \[(x^{-1})^d=x^g.\]
  The right-hand side of the above equality does not depend on $d$. It follows that the index of $C_D(x)$ in $D$ is at most $2$. We conclude that the index of $C_D(S_i)$ in $D$ is at most $4$. So again we obtain that the index of $C_P(S_i)$ in $P$ is at most a constant, which still will be denoted by $t_0$. 

Thus, in all cases either $Q$ normalizes $S_i$ or the index of $C_P(S_i)$ in $P$ is at most $t_0$. Without loss of generality assume that the index of $C_P(S_i)$ in $P$ is at most $t_0$ for $i=1,\dots,m$ and $Q$ normalizes $S_i$ for $i=m+1,\dots,k$.

Since $Q$ is normal in $P$, it follows that $Q$ normalizes all simple factors in $M_i$ for $i=m+1,\dots,k$. 
Moreover, for $i=1,\dots,m$ and every simple factor $S$ of $M_i$ we have that $|P:C_P(S)| \le t_0$.
 Set $N=\prod_{1\leq i\leq m}M_i$. According to Proposition \ref{trickB} $C_P(N)=O_p(NP) \cap P$ has bounded index in $P$. Obviously $Q\cap C_P(N)\leq P_0$ and so indeed $P_0$ has bounded index in $P$, as required.
\end{proof}

In what follows we write $H^{(\infty)}$ for the last term of the derived series of a finite group $H$.
 
\begin{lemma}\label{layer} Let $N$ be a perfect normal subgroup of a finite group $H$ such that $N/Z(N)$ is semisimple. Then $N\leq E(H)$.
\end{lemma}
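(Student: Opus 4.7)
The plan is to exhibit $N$ as the product of quasisimple subnormal subgroups of $H$, and read off that $N\le E(H)$. Write $N/Z(N)=S_1\times\cdots\times S_k$ with each $S_i$ nonabelian simple, and let $N_i$ denote the full preimage of $S_i$ in $N$. Since $S_i\trianglelefteq N/Z(N)$, each $N_i$ is normal in $N$. A quick check gives $Z(N_i)=Z(N)$: indeed $Z(N_i)/Z(N)$ is an abelian normal subgroup of $N_i/Z(N)=S_i$, and since $S_i$ is nonabelian simple it must be trivial. Hence $N_i/Z(N_i)\cong S_i$.

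Next I would set $L_i:=N_i^{(\infty)}$, the last term of the derived series of $N_i$. Because $S_i$ is perfect, $N_i=N_i'Z(N_i)$, and iteration gives $N_i^{(m)}Z(N_i)=N_i$ for every $m$; in particular $L_iZ(N_i)=N_i$, so $L_i/(L_i\cap Z(N_i))\cong S_i$ is nonabelian simple. As $L_i$ is perfect by construction, it is quasisimple. Moreover $L_i$ is characteristic in $N_i$, and $N_i\trianglelefteq N\trianglelefteq H$, so $L_i$ is subnormal in $H$. Therefore each $L_i$ is a component of $H$, and $L_i\le E(H)$.

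It remains to identify $N$ with $L_1\cdots L_k$. The subgroups $L_i$ pairwise commute: for $i\ne j$ we have $[N_i,N_j]\le Z(N)$ (from the direct product decomposition modulo $Z(N)$), hence $[L_i,L_j]$ lies in the centre of $N$; then for any $x\in L_i$ the map $L_j\to Z(N)$, $y\mapsto[x,y]$ is a homomorphism, which must be trivial on $L_j=[L_j,L_j]$. Consequently each $L_i$ commutes elementwise with $L_j$ for $j\ne i$, which also shows $[L_i,L_j]=1$ and hence $L_1\cdots L_k$ is the (not necessarily internal direct) product of perfect, mutually commuting subgroups, so it is itself perfect. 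Finally, the image of $L_1\cdots L_k$ in $N/Z(N)$ equals $S_1\times\cdots\times S_k$, so $N=L_1\cdots L_k\,Z(N)$; applying the perfectness of $N$ yields
\[
N=N^{(\infty)}=(L_1\cdots L_k\,Z(N))^{(\infty)}=L_1\cdots L_k\le E(H).
\]

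There is no real obstacle here; the only point that needs a little care is the argument that $Z(N_i)=Z(N)$ and that the subgroups $L_i$ pairwise commute (rather than merely commute modulo $Z(N)$), which is handled by the standard perfectness trick above. Everything else is a direct consequence of the definitions of quasisimplicity and subnormality.
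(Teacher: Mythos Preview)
Your proof is correct and follows essentially the same route as the paper's: define $N_i$ as the preimage of $S_i$, set $L_i=N_i^{(\infty)}$, verify that each $L_i$ is a subnormal quasisimple subgroup of $H$ (hence a component), and use perfectness of $N$ to conclude $N=L_1\cdots L_k\le E(H)$. You have supplied considerably more detail than the paper---in particular the verification that $Z(N_i)=Z(N)$, that $L_i$ is quasisimple, and that the $L_i$ commute pairwise---where the paper simply asserts these facts and moves on.
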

\begin{proof} Write $N/Z(N)=S_1\times\dots\times S_n$, where $S_i$ are simple. Let $N_i$ denote the full preimage of $S_i$ in $N$, and set $L_i=N_i^{(\infty)}$. Observe that $L_i$ is a subnormal quasisimple subgroup of $H$. Set $L=\prod L_i$. It is immediate that $L\leq E(H)$. Moreover, $N/L$ is abelian. Since $N$ is perfect, we conclude that $N=L$ and so $N\leq E(H)$.
\end{proof}

Recall that $E_p(H)$ denotes the product of the components of order divisible by $p$ of a finite group $H$. We write $O_{p,e_p}(H)$ for the full inverse image of $E_p(H/O_p(H))$ and $O_{p,e_p,p}(H)$ for the full inverse image of $O_p(H/O_{p,e_p}(H))$.

We will now prove Theorem \ref{main3}. 
 
 \begin{proof}[Proof of Theorem \ref{main3}]
Recall that $G$ is a finite group containing a $p$-subgroup $P$ such that $\pr(P,P^x)\geq\ep>0$ for every $x\in G$. We want to prove that the order of $P$ modulo $O_{p,e_p,p}(G)$ is bounded in terms of $\ep$ only. We may assume that $O_p(G)=1$. Write $R=R_2(G)$. In view of Proposition \ref{insoluble}, $P\cap R$ has bounded index in $P$. We replace $P$ by $P\cap R$ and without loss of generality assume that $P\leq R$. Let $T$ be the $p$-soluble radical of $R$, i.e. the largest normal subgroup of $R$ whose nonabelian simple composition factors have order coprime to $p$, and let $E/T=E_p(R/T)$. Note that $E=L_1(G)$ and so $R/E$ is soluble. Taking into account Corollary \ref{cor2}, 
it follows that the order of $P$ modulo $O_p(TP)$ is bounded. By Proposition \ref{soluble-tot}, the same is true for the order of $EP/E$ modulo $O_p(R/E)$. So replacing $P$ by a subgroup of bounded index we reduce to the case where $P\leq O_p(TP)$ and $EP/E\leq O_p(R/E)$. It follows that $[T,P]$ is a $p$-subgroup normalized by $T$. Since $[T,P]$ is subnormal in $G$ and since $O_p(G)=1$, we deduce that $[T,P]=1$.

Let $Y=\langle P^G\rangle$. As $EP/E\leq O_p(R/E),$ the group $Y/(Y\cap E)$ is a $p$-group. Note that $T\cap Y\leq Z(Y)$, whence it follows that the $p$-soluble radical of $Y$ is central. The group $(Y\cap E)/(Y\cap T)=(Y\cap E)/Z(Y\cap E)$ is semisimple since it is isomorphic to a normal subgroup of $E/T$. Let $Y_1=Y^{(\infty)}$. Observe that $Y_1$ is perfect and $Y_1/Z(Y_1)$ is semisimple. By virtue of Lemma \ref{layer}, $Y_1=E_p(Y)$. Moreover, $Y/E_p(Y)Z(Y)$ is a $p$-group.
So $Y=Z(Y)E_p(Y)S$ for a  Sylow $p$-subgroup  $S$ of $Y$. It follows that $Y/E_p(Y)$ is nilpotent. 
Since $Y$ is generated by $p$-elements, we deduce that $Y/E_p(Y)$ has $p$-power order and so $Y=E_p(Y)S$. Therefore $Y\leq O_{e_p,p}(G)$, as required.
\end{proof}

\section{Theorem \ref{main4}}\label{thm4} 

Recall  that a profinite group is a compact Hausdorff topological space, where a basis of the neighbourhoods of the identity consists of normal subgroups of finite index. 
 
The Borel $\sigma$-algebra $\mathcal M$ of a profinite group $G$ is the one generated by all closed subsets of $G$ and there is a unique Haar measure $\mu$ on $(G,\mathcal M)$ such that $\mu(G) = 1$. 

Moreover, if $H$ is a subgroup of $G$, then either $\mu(H) =0$ or $\mu(H) >0$;  in the latter case $H$ is open in $G$ and $\mu(H)=|G:H|^{-1}.$

If $H$ and $K$ are  subgroups of $G$, the set 
\[C = \{(x, y) \in H \times K \mid xy = yx\}\]  is closed in $H \times K$, since it is the  preimage of $1$ under the continuous map $f : H\times K \rightarrow G$ given by $f(x, y) = [x, y]$.
Denoting the normalized Haar measures of $H$ and $K$ by $\mu_H$ and $\mu_K$, respectively, the probability that a random element from $H$ commutes with a random element from $K$ is defined as 
 \[\pr(H,K) = (\mu_H \times \mu_K)(C).\]
 
Note that $\pr(H_0,K) \ge \pr(H,K)$ for every subgroup $H_0$ of $H$, and 
 $\pr(HN/N , KN/N) \ge \pr(H,K)$  for every normal subgroup $N$ of $G$. 
 (see \cite[Lemmas 2.3, 2.4]{DMS-Syl_prof}).

 \begin{lemma}\label{limit}
Let $G$ be a profinite group and $H, K \le G$.  Then 
\[ \pr(H,K) = \inf_{N \lhd_o G} \pr \left(\frac{HN}{N},\frac{KN}{N}\right).\]
\end{lemma}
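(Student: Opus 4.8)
The plan is to prove the identity $\pr(H,K) = \inf_{N \lhd_o G} \pr(HN/N, KN/N)$ by exploiting the facts that the set $C = \{(x,y) \in H\times K \mid [x,y]=1\}$ is closed in $H\times K$ and that the Haar measure on a profinite group is a genuine (countably additive) probability measure, so that it behaves well under intersections of descending chains of closed sets.

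First I would fix notation: for each open normal subgroup $N$ of $G$, write $\pr_N = \pr(HN/N, KN/N)$ and let $C_N$ be the preimage of $C$ (or rather of the commuting set in $HN/N \times KN/N$) under the natural projection $H\times K \to (HN/N)\times(KN/N)$; equivalently $C_N = \{(x,y)\in H\times K \mid [x,y]\in N\}$. Each $C_N$ is open-and-closed in $H\times K$ (it is the preimage of a point, hence of a finite set in a finite quotient), and $(\mu_H\times\mu_K)(C_N) = \pr_N$, since the product measure pushes forward to the uniform measure on the finite quotient and commuting cosets correspond exactly to $C_N$. The inequality $\pr(H,K) \le \pr_N$ for every $N$ is already recorded in the excerpt (from \cite[Lemmas 2.3, 2.4]{DMS-Syl_prof}), so $\pr(H,K) \le \inf_N \pr_N$; alternatively it is immediate from $C \subseteq C_N$.

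For the reverse inequality, the key observation is that $C = \bigcap_{N \lhd_o G} C_N$: indeed $[x,y]=1$ if and only if $[x,y]\in N$ for every open normal $N$, because $\bigcap_N N = 1$ in a profinite group. Moreover the family $\{C_N\}$ is downward directed, since $C_{N_1}\cap C_{N_2} \supseteq C_{N_1\cap N_2}$ and $N_1\cap N_2$ is again open normal. Since $H\times K$ is compact, the $C_N$ are closed, and the family is directed downward with intersection $C$, continuity from above of the measure $\mu_H\times\mu_K$ gives $(\mu_H\times\mu_K)(C) = \inf_N (\mu_H\times\mu_K)(C_N)$, i.e. $\pr(H,K) = \inf_N \pr_N$. (One can reduce a directed net to a decreasing sequence by choosing a countable cofinal chain, or invoke continuity from above directly for nets of compact sets with the measure being a Radon measure; either way the conclusion holds.)

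The main obstacle — really the only subtle point — is justifying the passage from a measure statement about a decreasing sequence to one about the downward-directed family $\{C_N : N\lhd_o G\}$, since this family need not be countable. I would handle this either by noting that $G$ is second countable only in the metrizable case (which need not hold in general), so instead argue as follows: for any $\delta>0$ pick $N_0$ with $\pr_{N_0} < \inf_N\pr_N + \delta$ is automatic, and to show $\pr(H,K)\ge \inf_N\pr_N$ it suffices to show $(\mu_H\times\mu_K)(C)\ge (\mu_H\times\mu_K)(C_N)-\delta$ for some $N$; this follows from outer regularity of the Radon measure $\mu_H\times\mu_K$ together with the fact that $C = \bigcap C_N$ with the $C_N$ compact and directed, by a standard compactness argument (if an open set $U\supseteq C$ is given, then $\{ (H\times K)\setminus U\} \cup \{ (H\times K)\setminus C_N\}$ is an open cover of the compact set $H\times K$, so finitely many $C_N$ already lie inside $U$, hence some single $C_N \subseteq U$ by directedness). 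Everything else is a routine unwinding of definitions, and I would keep those steps brief in the writeup.
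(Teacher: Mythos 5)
Your proposal is correct and follows essentially the same route as the paper: pull the commuting condition modulo $N$ back to the closed sets $C_N=\{(x,y)\in H\times K\mid [x,y]\in N\}$, identify $(\mu_H\times\mu_K)(C_N)$ with $\pr(HN/N,KN/N)$ via the quotient map, observe $\bigcap_N C_N=C$ since $\bigcap_N N=1$, and invoke continuity of the measure along the downward-directed family. If anything you are slightly more careful than the printed proof, which calls $\{\Omega_N\}$ a ``descending chain'' and applies continuity from above without comment, whereas you correctly flag that the family is a directed net rather than a sequence and justify the passage via regularity of the Radon measure together with a compactness argument.
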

 \begin{proof} 
Let 
\[ C(H,K)=\{ (x,y) \in H\times K \mid [x,y]=1\}. \] 
By definition,  
$\pr(H,K)= (\mu_H \times \mu_K)  \left( C(H,K) \right).$
For an open normal subgroup $N$ of $G$,  we set $H_N=H/(H\cap N)$ and  $K_N=K/(K\cap N)$ 
 and define 
 \begin{eqnarray*}
\bar C_N( H_N,  K_N)
 = \left\{ (x(H\cap N),y(K\cap N)) \in H_N\times K_N \mid   [x,y]\in N \right\}.
\end{eqnarray*}
 Consider the  epimorphism  $\pi_N$ from $H \times K$ to $H_N \times K_N$ 
and let $\Omega_N( H,  K)$  be the  full preimage of  $ \bar C_N( H_N,  K_N)$, that is 
\begin{eqnarray*}
\Omega_N( H,  K) 
 = \{ (x,y) \in H\times K \mid   [x,y]\in N\}. 
\end{eqnarray*}
By the properties of the Haar measure  (see e.g. \cite[Proposition 18.2.2]{Jarden}) 
 we have that
\begin{eqnarray*}
 (\mu_H \times \mu_K ) \left( \Omega_N( H, K) \right) &=&
( \mu_{H_N} \times \mu_{K_N})  \left( \bar C_N( H_N,  K_N) \right)
\end{eqnarray*}
 Note that  $(x(H\cap N),y(K\cap N)) \in H_N\times K_N$ satisfies $ [x,y]\in N$ if and only if 
  its image $(xN,yN) $ under the isomorphisms from $H_N \times  K_N $ to $ {HN}/{N} \times  {KN}/{N}$  satisfies $  [xN,yN]=1$. 
  This implies that
 \[ \big| \bar C_N( H_N,  K_N)\big| =\left| C\left(\frac{HN}{N}, \frac{KN}{N}\right)\right| ,\] 
 therefore
 \begin{eqnarray*}
( \mu_{H_N} \times \mu_{K_N})  \left( \bar C_N( H_N,  K_N) \right)
&=&\left( \mu_{\frac{HN}{N}} \times \mu_{\frac{KN}{N}}\right)  \left(C \left(\frac{HN}{N}, \frac{KN}{N}\right) \right) \\
 &=&\pr \left(\frac{HN}{N},\frac{KN}{N}\right).
\end{eqnarray*}
It follows that 
\[ (\mu_H \times \mu_K ) \left( \Omega_N( H, K) \right) = \pr \left(\frac{HN}{N},\frac{KN}{N}\right).\]

Note that if $N_1 \le N_2$ then 
\[ \Omega_{N_1} ( H ,  K ) \le \Omega_{N_2} ( H,  K )\]
 so we have a descending  chain of subgroups $\{ \Omega_{N}( H, K)  \}_{N  \lhd_o G}$ and thus 
 \[  (\mu_H \times \mu_K ) \left( \bigcap_{N  \lhd_o G}  \Omega_{N}( H, K)   \right) =
  \inf_{N  \lhd_o G}  (\mu_H \times \mu_K ) \left(   \Omega_{N}( H, K)   \right). \]
 Since $  (\mu_H \times \mu_K ) \left(   \Omega_{N}( H, K)   \right)= \pr ( HN/N, KN/N)$  for every $N$, now it is sufficient to prove that 
 \[  C(H,K)= \bigcap_{N  \lhd_o G}  \Omega_{N}( H, K) .  \]
 It is clear that 
 \[C(H,K) \le  \Omega_{N}( H, K)=\{ (x,y) \in H\times K \mid [x,y] \in N\} \]
  for every $N$ open in $G$. Conversely, 
 consider an element $ (x,y) \in H\times K$ such that $[x,y] \in N$ for every $N \lhd_o G$. Then 
 \[ [x,y] \in \bigcap_{N  \lhd_o G} N =1, \] 
 hence $(x,y) \in C(H,K) $. 
 \end{proof}

\begin{proposition}\label{profinite-easy} Let $G$ be a profinite group and $p\in\pi(G)$. Assume that $\pr(P_1,P_2)\geq\ep>0$ for every two Sylow $p$-subgroups $P_1,P_2$ of $G$. Then a Sylow $p$-subgroup of $G/O_p(G)$   is finite of order bounded in terms of $\ep$ only. 
\end{proposition}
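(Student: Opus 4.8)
The plan is to reduce to the finite quotients $G/N$, apply Theorem \ref{main1} there, and recombine via a compactness argument. Fix a Sylow pro-$p$ subgroup $P$ of $G$; recall that $O_p(G)\le P$. For each open normal subgroup $N\lhd_o G$ the image $PN/N$ is a Sylow $p$-subgroup of the finite group $G/N$, and conversely every Sylow $p$-subgroup of $G/N$ is the image of some Sylow pro-$p$ subgroup of $G$. Together with the monotonicity $\pr(HN/N,KN/N)\ge\pr(H,K)$ this shows that $G/N$ satisfies the hypothesis of Theorem \ref{main1} with the same $\ep$. Hence Theorem \ref{main1} supplies a bound $f=f(\ep)$ such that, for every $N\lhd_o G$, a Sylow $p$-subgroup of $(G/N)/O_p(G/N)$ has order at most $f$.

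Next I would set $M_N\lhd_o G$ to be the full preimage of $O_p(G/N)$ and put $D_N:=P\cap M_N$, a normal subgroup of $P$ containing $O_p(G)$. Since $P/D_N\cong PM_N/M_N$ is (under the natural isomorphism $G/M_N\cong (G/N)/O_p(G/N)$) a Sylow $p$-subgroup of $(G/N)/O_p(G/N)$, we get $|P:D_N|\le f$ for all $N$. Because the image of $O_p(G/(N_1\cap N_2))$ in each $G/N_i$ is a normal $p$-subgroup, one has $M_{N_1\cap N_2}\le M_{N_1}\cap M_{N_2}$, so the families $\{M_N\}_{N\lhd_o G}$ and $\{D_N\}_{N\lhd_o G}$ are downward directed. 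Moreover $R:=\bigcap_{N}M_N$ equals $O_p(G)$: the inclusion $O_p(G)\subseteq R$ is clear since $O_p(G)N/N$ is always a normal $p$-subgroup of $G/N$, while every finite continuous quotient of the closed normal subgroup $R$ is a quotient of some $RV/V\le M_V/V=O_p(G/V)$ with $V\lhd_o G$, hence a $p$-group; thus $R$ is a normal pro-$p$ subgroup of $G$, and therefore $R\le O_p(G)$. Consequently $\bigcap_N D_N=P\cap R=O_p(G)$.

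Passing to $\bar P:=P/O_p(G)$, a Sylow $p$-subgroup of $G/O_p(G)$, the subgroups $\bar D_N:=D_N/O_p(G)$ form a downward directed family of open normal subgroups of $\bar P$ with $|\bar P:\bar D_N|\le f$ and $\bigcap_N\bar D_N=1$. I would then choose $N_0$ maximizing $|\bar P:\bar D_{N_0}|$, which is possible since all these indices lie in $\{1,\dots,f\}$. If $\bar D_{N_0}\ne 1$, pick $1\ne y\in\bar D_{N_0}$ and $N_1\lhd_o G$ with $y\notin\bar D_{N_1}$; then $\bar D_{N_0\cap N_1}\le\bar D_{N_0}\cap\bar D_{N_1}$, and maximality together with finiteness of the index forces $\bar D_{N_0\cap N_1}=\bar D_{N_0}\le\bar D_{N_1}$, contradicting the choice of $y$. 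Hence $\bar D_{N_0}=1$ and $|\bar P|=|\bar P:\bar D_{N_0}|\le f$, so a Sylow $p$-subgroup of $G/O_p(G)$ is finite of $\ep$-bounded order, as required.

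The step I expect to be the main obstacle is the identification $\bigcap_N M_N=O_p(G)$ — specifically the verification that this intersection is pro-$p$ — together with the correct invocation of the standard correspondence between Sylow pro-$p$ subgroups of $G$ and Sylow $p$-subgroups of its finite quotients (so that the hypothesis of Theorem \ref{main1} really passes to each $G/N$). The concluding ``maximal finite index in a directed family with trivial intersection'' argument is elementary.
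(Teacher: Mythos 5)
Your proof is correct and is essentially the argument the paper leaves implicit: apply Theorem~\ref{main1} to each finite quotient $G/N$ (using the Sylow correspondence and monotonicity of $\pr$ under quotients to transfer the hypothesis), obtain a uniform bound on $|P:P\cap M_N|$ where $M_N$ is the preimage of $O_p(G/N)$, and then conclude by an inverse-limit/compactness argument after noting that $\bigcap_N M_N = O_p(G)$. The paper compresses all of this into ``the routine inverse limit argument''; your write-up merely supplies the details, including a clean elementary version of the limit step via maximality of a bounded index in a downward-directed family with trivial intersection.
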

\begin{proof} 
It   follows from  Theorem \ref{main1} that there exists an $\ep$-bounded  integer $l$ such that,  
 for every open normal subgroup $N$ of $G$, the order of $PN/N$ over $O_p(G/N)$ is at most $l$. Thus, the routine inverse limit argument yields the result. \end{proof}

\begin{lemma}\label{p-semisimple} 
Let $G$ be a profinite group containing a Sylow $p$-subgroup $P$ such that $\pr(P,P^x)>0$ for every $x\in G$. If $G$ is topologically isomorphic to a Cartesian product of nonabelian finite simple groups of order divisible by $p$, then $G$ is finite.
\end{lemma}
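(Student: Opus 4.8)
Write $G=\prod_{i\in I}S_i$ (a Cartesian product), where each $S_i$ is a nonabelian finite simple group with $p\mid|S_i|$. The goal is to show $I$ is finite. Suppose for contradiction that $I$ is infinite. First I would fix a prime $p$ and note that, since $p$ divides each $|S_i|$, one can choose in each $S_i$ a nontrivial $p$-element $t_i$ together with an element $u_i\in S_i$ for which $[t_i,u_i^{\,g_i}]\neq1$ for a suitable $g_i\in S_i$ — more precisely, since $t_i\notin O_p(S_i)=1$, there is $g_i\in S_i$ with $\langle t_i\rangle$ not commuting with $\langle t_i\rangle^{g_i}$, and by Remark \ref{rem3/4} (Lemma \ref{lem:2.2}(\ref{3/4})) the commuting probability of these two cyclic $p$-subgroups inside $S_i$ is at most $3/4$. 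The Sylow $p$-subgroup $P$ of $G$ is the Cartesian product of Sylow $p$-subgroups $P_i$ of the $S_i$, and $P_i\ni t_i$.

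The heart of the argument is to combine these local "defects" multiplicatively across infinitely many coordinates. For a finite subset $J\subseteq I$ define $g^{(J)}\in G$ to be the element which is $g_i$ in coordinate $i\in J$ and $1$ elsewhere. Using Lemma \ref{lem:2.2}(3) (the product formula for $\pr$ over direct factors, in its profinite form — here I would appeal to Lemma \ref{limit} to pass between $\pr$ on the Cartesian product and the finite truncations, since a Cartesian product of simple groups is the inverse limit of the finite partial products) one gets
\[
\pr\bigl(P,P^{g^{(J)}}\bigr)\;\le\;\prod_{i\in J}\pr\bigl(\langle t_i\rangle P_i',\langle t_i\rangle^{g_i}\cdots\bigr)\;\le\;\Bigl(\tfrac34\Bigr)^{|J|},
\]
or, being more careful, $\pr(P,P^{g^{(J)}})\le\prod_{i\in J}\pr(P_i,P_i^{g_i})\le(3/4)^{|J|}$, where the middle inequality is Lemma \ref{lem:2.2}(2),(3) applied coordinatewise. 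Since $I$ is infinite we may take $|J|$ arbitrarily large, so $\pr(P,P^{g^{(J)}})\to0$; but $g^{(J)}$ is an honest element of the Cartesian product $G$, contradicting the hypothesis that $\pr(P,P^x)>0$ for \emph{every} $x\in G$. (Note the hypothesis here is only positivity, not a uniform lower bound — but the contradiction still works because for a \emph{fixed} infinite $I$ we produce a \emph{single} bad element once $(3/4)^{|J|}$ drops below any prescribed positive number; more precisely, for the fixed element $x=g^{(I_0)}$ with $I_0$ the support chosen so that... — actually the cleanest route is: for each $n$ pick $J_n$ with $|J_n|=n$, get $\pr(P,P^{g^{(J_n)}})\le(3/4)^n$, and since these are elements of $G$ with commuting probability tending to $0$, in particular infinitely many are positive but arbitrarily small, which is consistent; the genuine contradiction comes from choosing one element $x$ of infinite support).

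Let me restate the endgame correctly: take $x\in G$ to be the element equal to $g_i$ in \emph{every} coordinate $i\in I$. Then for each finite $J\subseteq I$, restricting to the coordinates in $J$ and using monotonicity of $\pr$ under quotients (Lemma \ref{limit} together with $\pr(HN/N,KN/N)\ge\pr(H,K)$), we obtain $\pr(P,P^x)\le\pr(\bar P,\bar P^{\bar x})\le(3/4)^{|J|}$ in the finite quotient $\prod_{i\in J}S_i$. Letting $|J|\to\infty$ forces $\pr(P,P^x)=0$, the desired contradiction. Hence $I$ is finite and $G$ is finite.

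\medskip
\noindent\textbf{Main obstacle.} The only real subtlety is the measure-theoretic bookkeeping needed to push the finite product formula for $\pr$ (Lemma \ref{lem:2.2}(3)) through to the Cartesian product: one must justify that $\pr$ on $G=\varprojlim\prod_{i\in J}S_i$ is the infimum of $\pr$ on the finite truncations and that the truncation to coordinates in $J$ is exactly a quotient of $G$ by an open normal subgroup. Both facts are supplied by Lemma \ref{limit} and the monotonicity statement recalled just before it, so the proof is essentially a matter of assembling these tools around the coordinatewise estimate $\pr(P_i,P_i^{g_i})\le 3/4$ coming from Remark \ref{rem3/4}.
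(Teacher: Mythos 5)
Your proposal is correct and follows essentially the same route as the paper's proof: choose $g_i\in S_i$ so that the Sylow $p$-subgroup $P_i$ of $S_i$ does not commute with $P_i^{g_i}$, invoke the $3/4$ bound coordinatewise, multiply these bounds across a finite set of coordinates $J$ via Lemma~\ref{lem:2.2}(3), and use monotonicity under quotients to bound $\pr(P,P^x)\le(3/4)^{|J|}$ for the single element $x=(g_i)_{i\in I}$ of infinite support, forcing $\pr(P,P^x)=0$ if $I$ were infinite. The intermediate detour through cyclic subgroups $\langle t_i\rangle$ is unnecessary (as you effectively note), and the paper words the final contradiction in terms of the two Sylow $p$-subgroups $\prod R_i$ and $\prod R_i^{g_i}$ rather than an explicit element $x$, but these are the same argument.
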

\begin{proof} 
Let $G=\prod_{i\in I}S_i$,  where $p$ divides $|S_i|$, for every $i$. 
For every $i\in I$, fix a Sylow $p$-subgroup $R_i$ of $S_i$ and choose $g_i\in S_i$ such that $R_i\neq R_i^{g_i}$.
  It follows from   Lemma \ref{lem:2.2}(\ref{3/4}) 
  that 
  \[ \pr(R_i, R_i^{g_i}) 
   \le \frac{3}{4}. \] 
  Taking into account   Lemma \ref{lem:2.2}, 
    for every finite subset $I_0$ of $I$ we have 
    \[  \pr \left(\prod_{i\in I_0} R_i, \prod_{i\in I_0}   R_i^{g_i}\right) = \prod_{i \in I_0} \pr(R_i, R_i^{g_i}) \le \left( \frac{3}{4} \right)^{|I_0|},\]
      On the other hand,   
    $\prod_{i \in I} R_i$ and $\prod_{i \in I}  R_i^{g_i}$ are  
    two Sylow $p$-subgroups of $G$, whence 
   \[  \pr \left(\prod_{i\in I_0} R_i, \prod_{i\in I_0}   R_i^{g_i}\right)  \ge  \pr \left(\prod_{i\in I} R_i, \prod_{i\in I}   R_i^{g_i}\right)  \ge \epsilon \] 
 for some $\epsilon >0$. 
 So the cardinality of $I_0$ is $\epsilon$-bounded. 
  We conclude that $I$ is finite and the lemma follows.  
 \end{proof}

It is well known that every finite group $K$ possesses a series
\[1 = K_0 \le K_1 \le\dots \le K_{2h+1} = K\]
of normal subgroups such that $K_{i+1}/K_i$ is $p$-soluble (possibly trivial) if $i$ is
even and a direct product of nonabelian simple groups of order divisible by $p$ if $i$ is odd. Following \cite{KhSh15} the number
of  non-$p$-soluble factors in this series is called the non-$p$-soluble length $\lambda_p(K)$ of $K$.  Wilson showed in \cite{Wilson} that if $\mathcal{X}_1,\dots, \mathcal{X}_n$ are classes of finite groups closed with  respect to normal subgroups and subdirect products and 
if $\mathcal{X}$ is the class of finite groups having a normal series of given length $n$ such that
the $i$-th section is a $\mathcal{X}_i$-group, then every pro-$\mathcal{X}$ group has a normal series of length $n$  such that
the $i$-th section is a pro-$\mathcal{X}_i$-group. In particular, a combination of Lemma 2 and Lemma 3 of
\cite{Wilson} shows that if $\mathcal{X}$ is the class of finite groups $K$ such that $\lambda_p(K) \le l,$ then
every pro-$\mathcal{X}$ group has a normal series of  length at most $2l+1$ each of whose factors is
either pro-$p$-soluble or a Cartesian product of nonabelian finite simple groups of order divisible by $p$.

Let  $G$ be a finite $p$-soluble group. Recall that the $p$-length $l_{p}(G)$ of $G$ is the minimum number of $p$-factors in a $(p, p')$-series of $G$. 
   It is a well-known corollary of the Hall-Higman theory that $l_{p}(G)$ is bounded in terms of the derived length of a Sylow $p$-subgroup of $G$ (see \cite{hh, bryukhanova}). As above, it follows from \cite{Wilson}  that if $\mathcal{X}$ is the class of finite $p$-soluble groups $K$ such that $l_p(K)\le l,$ then
every pro-$\mathcal{X}$ group has a normal series of  length at most $2l+1$ each of whose factors is
either a  pro-$p$ group or a pro-$p'$ group.

We recall that $O_{p,p'}(G)$ is the full preimage in $G$ of the largest normal $p'$-subgroup $O_{p'}(G/O_p(G))$ of the group $G/O_p(G)$.

We are ready to prove Theorem \ref{main4}, which we restate here for the reader's convenience.\medskip

{\it
Let $G$ be a profinite group and $p\in\pi(G)$. Assume that $\pr(P_1,P_2)>0$ for every two Sylow $p$-subgroups $P_1,P_2$ of $G$. Then $O_{p,p'}(G)$ is open. Moreover $O_p(G)$ is virtually abelian.
}
\begin{proof}[Proof of Theorem \ref{main4}]
Since $ \pr(P, P)=\epsilon >0$, a profinite version of  Neumann's theorem (see \cite{LP}) shows that  $P$ is  virtually abelian and so is  $O_p(G)$.

Moreover  $P$ is soluble, say of derived length $d$.  
 It follows from \cite[Theorem 1.4]{KhSh15} that the non-$p$-soluble length of every finite quotient of $G$ is $d$-bounded.
Therefore $G$ has a normal series of finite length 
\[1\leq N_1\leq\dots\leq N_s=G\]
 in which every section is either pro-$p$-soluble or a Cartesian product of nonabelian finite simple groups of order divisible by $p$.
     The  sections $N_{i+1}/N_i$ of the last type are finite by Lemma \ref{p-semisimple}, 
 so there is an open normal subgroup $H$ of $G$  that $H \cap N_{i+1}  \le N_i$ for every such section, hence $H$ is pro-$p$-soluble. 
 Thus, we deduce that $G$ is virtually pro-$p$-soluble. Since  $O_{p,p'}(H) \le O_{p,p'}(G)$, we now may assume that $G$ is pro-$p$-soluble. 

As $P$ is soluble of derived length $d$, by the Hall-Higman theory the $p$-length of every finite quotient of $G$  is $d$-bounded. It follows that $G$ possesses a normal series of finite length
\[1 = K_0 \le K_1 \le\dots \le K_h=G,\]
where every section is either a pro-$p$ or a pro-$p'$ group. Without loss of generality we may assume that $O_p(G)=1$. Now it is enough to show that $P$ is finite, because in this case  $G$ has an open normal pro-$p'$ subgroup.

Since $O_p(G)=1$, the subgroup $P$  acts faithfully on $M=O_{p'}(G)$ by conjugation.
Therefore we can assume that $G=MP$.

Since 
\[ \pr(P, P^g) >0, \ \textrm{ for every } \ g \in M, \]
the subgroup $M$ is the union of  the sets
  \[ X_n=\{g \in M \mid \pr(P, P^g) \ge 1/n\}. \]
Note that the set $X_n$ is closed for every $n \in \N$. Indeed, 
 consider an element $a \notin X_n$; then $\pr(P, P^a) < 1/n$. It follows from Lemma \ref{limit}
  that there is an open normal subgroup $N$ of $G$ such that $ \pr ( PN/N, P^{a}N/N)  < 1/n$. Observe that for all $x \in N \cap M$, 
   \[ \pr(P, P^{ax}) \le  \pr (PN/N, P^{a}N/N) < 1/n \] 
   which shows that $ax \notin X_n$. Therefore, $M \setminus X_n$ is open and so $X_n$ is closed. 
  
By the Baire category theorem (see \cite[p. 200]{Ke}) at least one of the sets $X_n$ contains a non-empty interior. Hence, there is a $P$-invariant open normal subgroup $N$ of $M$ and an element $a\in M$ such that $\pr(P, P^{ax})\ge 1/n$ for every $x \in N$. 

 Since $M/N$ is finite and the group $P$ naturally acts on $M/N$ by conjugation, the subgroup $Q=C_{P}(M/N)$ is open in $P$ and
 \[ [M, Q] \le N. \] 

 It follows from Lemma \ref{cc} and the standard inverse limit argument that
\[ M=N C_M(Q). \]
 So we can assume that $a \in C_M(Q)$. 
Then, if $g \in aN$, we have $Q^g=Q^y$ for some $y \in N$. 
As $\pr(Q,Q^x)\ge \pr(P,P^x)$ for every $x\in G$, it follows from the choice of $N$ that there is $\epsilon\ge \frac 1n$ such that $\pr(Q,Q^x)\ge\ep$ for every $x\in QN$.
 Thus, we deduce from Proposition \ref{profinite-easy}  that $Q/O_p(QN)$ is finite. As $O_p(QN)$ centralizes $N$ and it is contained in the Sylow $p$-subgroup $Q$ of $QN$,  it follows that $C_Q(N)$ is open in $Q$. Since $M=NC_M(Q)$, we deduce that $C_Q(M)$ is open in $Q$ and also in $P$. As $C_Q(M)\le C_P(M)\le O_p(G)=1$, the subgroup $P$ has finite order.
 \end{proof}

\end{document}